\theoremstyle{plain}
\newtheorem{thm}{Theorem}[section]
\newtheorem{cor}[thm]{Corollary}
\newtheorem{lem}[thm]{Lemma}
\newtheorem{prop}[thm]{Proposition}
\theoremstyle{definition}
\newtheorem{defi}[thm]{Definition}
\theoremstyle{remark}
\numberwithin{equation}{section}
\newcommand{\average}{{\mathchoice {\kern1ex\vcenter{\hrule height.4pt
width 6pt depth0pt} \kern-9.7pt} {\kern1ex\vcenter{\hrule
height.4pt width 4.3pt depth0pt} \kern-7pt} {} {} }}
\newcommand{\ave}{\average\int}
\def\R{\mathbb{R}}
\begin{document}

\title[The free boundary in the fully nonlinear thin obstacle problem]{The structure of the free boundary in the \\ fully nonlinear thin obstacle problem}

\author[X. Ros-Oton]{Xavier Ros-Oton}
\address{The University of Texas at Austin, Department of Mathematics, 2515 Speedway, Austin, TX 78751, USA}
\email{ros.oton@math.utexas.edu}

\author[J. Serra]{Joaquim Serra}
\address{Weierstra{\ss} Institut f\"ur Angewandte Analysis
und Stochastik, Mohrenstrasse 39, 10117 Berlin, Germany}
\email{joaquim.serra@upc.edu}


\keywords{Thin obstacle problem; fully nonlinear; Signorini problem}

\subjclass[2010]{35R35; 35J60.}

\maketitle

\begin{abstract}
We study the regularity of the free boundary in the fully nonlinear thin obstacle problem.
Our main result establishes that the free boundary is $C^1$ near regular points.
\end{abstract}

\vspace{4mm}

\section{Introduction}

The aim of this paper is to study the regularity of free boundaries in thin obstacle problems.

\subsection{Known results}

The first regularity results for thin obstacle problems were already established in the seventies by Lewy \cite{L}, Frehse \cite{F}, Caffarelli \cite{C}, and Kinderlehrer \cite{K}.
In particular, for the Laplacian $\Delta$, it was proved in \cite{C} that solutions are $C^{1,\alpha}$, for some small $\alpha>0$.

The regularity of free boundaries, however, was an open problem during almost 30 years.
One of the main difficulties in the understanding of free boundaries in thin obstacle problems is that there is not an a priori preferred order at which the solution detaches from the obstacle (blow-ups may have different homogeneities), as explained next.

In the classical (thick) obstacle problem it is not difficult to show that
\begin{equation}\label{classical}
0<cr^2\leq \sup_{B_r(x_0)}u\leq Cr^2
\end{equation}
at all free boundary points $x_0$, where $u$ is the solution of the problem (after subtracting the obstacle $\varphi$).
Then, thanks to this, the blow-up sequence $u(x_0+rx)/r^2$ converges to a global solution $u_0$, and such solutions $u_0$ can be shown to be convex and completely classified; see \cite{C-obst2,C-obst3} and \cite{C-obst}.

The situation is quite different in thin obstacle problems, in which one does not have \eqref{classical}.
This was resolved for the first time in Athanasopoulos-Caffarelli-Salsa \cite{ACS}, by using \emph{Almgren's frequency function}.
Thanks to this powerful tool, one may take the blow-up sequence
\[\frac{u(x_0+rx)}{\left(\ave_{\partial B_r(x_0)}u^2\right)^{1/2}},\]
and it converges to a \emph{homogeneous} function $u_0$ of degree $\mu$, for some $\mu>1$.
Then, by analyzing an eigenvalue problem on $S^{n-1}$, one can prove that
\[\mu<2\qquad \Longrightarrow \qquad \mu=\frac32,\]
and for $\mu=\frac32$ one can completely classify blow-ups.
This leads to the optimal $C^{1,\frac12}$ regularity of solutions and, using also a boundary Harnack inequality in ``slit'' domains, to the $C^{1,\alpha}$ regularity of the free boundary near \emph{regular points} ---those at which $\mu<2$.

After the results of \cite{ACS}, further regularity results for the free boundary have been obtained in \cite{CSS}, \cite{GP}, \cite{GPS}, \cite{DS}, \cite{KPS}, \cite{KRS} and \cite{BFR}.

\subsection{Our setting}

In this paper we study the fully nonlinear thin obstacle problem
\begin{equation}\label{obst-pb}\left\{\begin{array}{rclll}
F(D^2u)&\leq&0\,&\textrm{in} &B_1\\
u&\geq& \varphi\,&\textrm{on} &B_1\cap \{x_n=0\},\\
F(D^2u)&=&0 \,&\textrm{in} &B_1\setminus \{(x',0)\,:\, u(x',0)=\varphi(x')\}.
\end{array}\right.\end{equation}
Here, $x=(x',x_n)\in\R^n$.
When $u$ is even with respect to the variable $x_n$, then the problem is equivalent to
\begin{equation}\label{obst-pb2}\begin{array}{rclll}
F(D^2u)&=&0\,&\textrm{in} &B_1\cap \{x_n>0\}\\
\min(-u_{x_n},\,u-\varphi)&=& 0\,&\textrm{on} &B_1\cap \{x_n=0\}.
\end{array}\end{equation}

Problem \eqref{obst-pb2} was studied in \cite{MS}, where Milakis and Silvestre proved that solutions $u$ are $C^{1,\alpha}(B_{1/2})$ (for some small $\alpha>0$) by following the ideas of \cite{C}.
More recently, Fern\'andez-Real extended the results of \cite{MS} to the general nonsymmetric setting~\eqref{obst-pb} in~\cite{Fer}.

Still, nothing was known about the regularity of the free boundary for this problem.
The main difficulty in the study of such nonlinear thin free boundary problems is the lack of monotonicity formulas for fully nonlinear operators, which makes the proofs of \cite{ACS} non-applicable to the nonlinear setting.

\subsection{Main results}

We present here a new approach towards the regularity of thin free boundaries, and prove that for problem \eqref{obst-pb} the free boundary is $C^1$ near regular points.

As in \cite{MS,Fer}, we assume that the fully nonlinear operator $F$ satisfies:
\begin{equation}\label{F}
F\mbox{ is convex, with ellipticity constants }\lambda, \Lambda,\mbox{ and }F(0)=0.
\end{equation}
Our main result reads as follows.

\begin{thm}\label{thm1}
Let $F$ be as in \eqref{F}.
There exists $\epsilon_0=\epsilon_0(\lambda,\Lambda)>0$ for which the following holds.

Let $u\in C(B_1)$ be any solution of \eqref{obst-pb}, with $\varphi\in C^{1,1}$.
Then, at each free boundary point $x_0\in\partial\{u=\varphi\}\cap B_{1/2}\cap \{x_n=0\}$ we have the following dichotomy:
\begin{itemize}
\item[(i)] either \vspace{-3mm}
\[\sup_{B_r(x_0)}(u-\varphi)\geq c\,r^{2-\epsilon_0},\]
with $c>0$,\vspace{2mm}
\item[(ii)] or \hspace{30mm} $\sup_{B_r(x_0)}(u-\varphi)\leq C_\epsilon \,r^{2-\epsilon}$ \quad for all\quad $\epsilon>0$.
\end{itemize}
Moreover, the set of points $x_0$ satisfying (i) is an open subset of the free boundary and it is locally a $C^1$ graph.
\end{thm}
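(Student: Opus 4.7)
The absence of Almgren's frequency formula for fully nonlinear operators forces a replacement of the monotonicity approach by a compactness/contradiction analysis. At a free boundary point $x_0$, the plan is to consider dyadic rescalings
\[ u_r(x) = \frac{(u-\varphi-\ell_r)(x_0+rx)}{M_r}, \]
where $\ell_r$ is an affine correction absorbing the $C^{1,1}$ obstacle and a possible linear part of $u$, and $M_r$ is chosen so that $\|u_r\|_{L^\infty(B_1)}=1$. The $C^{1,\alpha}$ regularity from \cite{MS,Fer} yields uniform compactness in $C^{1,\beta}_{\textrm{loc}}$ for some $\beta<\alpha$: along a subsequence $u_r\to u_0$, where $u_0$ solves a thin obstacle problem in all of $\R^n$ for some translation-invariant convex fully nonlinear operator $F_0$ with zero obstacle. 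Two auxiliary ingredients available for such profiles are (a) semiconvexity of $u-\varphi$ in directions tangent to $\{x_n=0\}$, obtained by second-order difference quotients and the convexity of $F$, and (b) a one-sided monotonicity of the rescalings in $r$.

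The main analytic step is a classification of homogeneous global profiles together with a \emph{spectral gap} in their allowed homogeneities. I would first show that any blow-up $u_0$ with subquadratic growth is homogeneous of some degree $\mu\in[1,2)$; without a frequency function, this would be extracted by iterating a scale-invariance for the normalized growth $M_r$, i.e.\ showing that the ratio $M_{2r}/M_r$ stabilizes along the subsequence, forcing $M_{2r}/M_r\to 2^\mu$. The core technical point is then to establish that the set of admissible $\mu<2$ is contained in $[1,2-\epsilon_0]$ for a universal $\epsilon_0=\epsilon_0(\lambda,\Lambda)$. In the linear case the gap is explicit ($\mu=3/2$); for convex fully nonlinear $F$ it should persist by a perturbation/continuity argument within the ellipticity class, exploiting that the linearization of an admissible profile lies in a compact family of uniformly elliptic linear operators.

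Granted such a spectral gap, the dichotomy is immediate: if (ii) fails there exist $\epsilon>0$ and $r_k\to 0$ with $\sup_{B_{r_k}}(u-\varphi)\geq k\,r_k^{2-\epsilon}$; the associated blow-up has degree $\mu\leq 2-\epsilon<2$, hence $\mu\leq 2-\epsilon_0$ by the gap, and transferring this back to $u$ yields (i) at all small scales. For the $C^1$ structure of the regular set, note that a regular blow-up $u_0$ necessarily has contact set $\{x'\cdot e\leq 0,\,x_n=0\}$ for some $e=e(x_0)\in S^{n-2}$; the non-degeneracy in (i), combined with a stability/compactness argument, shows that (i) is open and that $e$ extends to a map on the regular set. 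An iteration-of-flatness argument in the spirit of \cite{DS} --- showing that closeness to a regular profile at one dyadic scale propagates to the next with a direction that moves only a little --- then upgrades $x_0\mapsto e(x_0)$ to a continuous map, yielding the $C^1$ graph property.

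The chief obstacle is the spectral gap. Without Dirichlet-form tools there is no immediate eigenfunction apparatus on $S^{n-1}$ and no monotonicity formula to force homogeneity, so one must develop a direct route combining convexity of $F$, tangential semiconvexity, and a careful perturbation from the linear case. This appears to be the technical heart of the paper; weakening \eqref{F}, in particular dropping convexity, would likely invalidate this step and, with it, the universal $\epsilon_0$.
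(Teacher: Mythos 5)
Your proposal hinges on two steps that the paper deliberately avoids because they are not available in the fully nonlinear setting, and neither is substantiated in your sketch. First, the homogeneity of blow-ups: you propose to extract a degree $\mu$ by showing that the ratio $M_{2r}/M_r$ ``stabilizes'' along the subsequence. Without Almgren's frequency function (which fails for fully nonlinear $F$) there is no mechanism forcing this stabilization, and even if the ratio converged along a subsequence this would not make the limit profile homogeneous. This is precisely the obstruction the paper circumvents: its proof never establishes homogeneity or uniqueness of blow-ups. Instead it proves only that any global \emph{convex} solution with subquadratic growth is one-dimensional on $\{x_n=0\}$ (Theorem \ref{thmclassif}), via a blow-down reducing the contact set to a convex cone, a dichotomy on the cone (zero measure ruled out by interior $C^{1,1}$ estimates and Lemma \ref{c-1}; nonempty interior handled by comparing directional derivatives), and --- crucially --- a new boundary Harnack inequality in slit cones for non-divergence equations (Proposition \ref{bdry-H}, Corollary \ref{bdry-H-allspace}). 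Your proposal contains no substitute for this boundary Harnack step; the half-space structure of the contact set of a regular blow-up, which you simply assert, is exactly what it is needed for.

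Second, the ``spectral gap'' giving a universal $\epsilon_0(\lambda,\Lambda)$: a perturbation/continuity argument from the Laplacian can only work when $|\Lambda-\lambda|$ is small (this is essentially what the paper's Corollary \ref{cor-reg} does to get $C^{1,\frac12-\delta}$ regularity), so it cannot produce $\epsilon_0$ for arbitrary ellipticity constants, and in any case there is no eigenvalue apparatus to perturb once homogeneity is unavailable. In the paper, $\epsilon_0$ comes from an entirely different source: the explicit two-dimensional extension profiles $\varphi_\beta^\pm=E^\pm(x_+)^\beta$ of Proposition \ref{prop-1D}, whose critical exponents satisfy $\beta_1<\frac12<\beta_2<1$ by a barrier computation (the bound $\underline C(\beta)\gtrsim(1-\beta)^{-1}$), and one fixes $\epsilon_0\in(0,1-\beta_2)$. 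The nondegeneracy in (i) and the openness of the regular set are then proved not by compactness/stability alone but by a quantitative barrier argument (Proposition \ref{contagi}) using the homogeneous subsolution $\Phi_{\rm sub}$ of degree $\beta_2+\gamma=1-\epsilon_0$, which yields $\partial_e(u-\varphi)(x_0+te)\geq ct^{1-\epsilon_0}$ at \emph{every} nearby free boundary point and hence the uniform growth $ct^{2-\epsilon_0}$ after integration. So the dichotomy and the $C^1$ conclusion require the 1D supersolution/subsolution construction, the slit-cone boundary Harnack, the Liouville-type classification, and the barrier propagation of nondegeneracy; your outline replaces all of these with the homogeneity-plus-gap scheme that is exactly what cannot be carried out here.
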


Notice that, for the Laplacian $\Delta$, once we know that the free boundary is $C^1$, then it can be proved that it is $C^\infty$; see \cite{DS,KPS} and also \cite{RS-C1}.

On the other hand, when $F$ is the Laplacian $\Delta$, at all free boundary points satisfying (i) the blow up is homogeneous of degree $3/2$, and thus all solutions are $C^{1,\frac12}$.
We do not expect this same exponent $3/2$ for all nonlinear operators $F(D^2u)$.
A priori, each different operator $F$ could have one (or more) different exponent~$\mu$, and thus in general solutions would be no better than $C^{1,\alpha}$ for some small $\alpha>0$.
Still, we show in Section \ref{sec7} that
\[u\in C^{1,\frac12-\delta}(B_{1/2})\quad \textrm{whenever}\quad |\Lambda-\lambda|\quad\textrm{is small enough;}\]
see Corollary \ref{cor-reg}.

We think that an interesting feature of our proof is that we establish the regularity of the free boundary without proving any homogeneity or uniqueness of blow-ups, a priori they could be non-homogeneous and/or non-unique.
We do not classify blow-ups but only prove that they are 1D on $\{x_n=0\}$, as explained next.

\subsection{The proofs}

To establish Theorem \ref{thm1} we assume that $x_0$ is a regular free boundary point (i.e., (ii) does not hold at $x_0$), and do a blow-up.
We have to do the blow-up along an appropriate subsequence, so that we get in the limit a global \emph{convex} solution to \eqref{obst-pb}, with zero obstacle, and with \emph{subquadratic growth} at infinity.
Then, we need to prove that blow-ups are 1D on $\{x_n=0\}$, that is, the blow-up $u_0$ is a 1D function on $\{x_n=0\}$, and in particular the contact set $\Omega^*=\{u_0=0\}\cap\{x_n=0\}$ is a half-space.

To do this, we first notice that by a blow-down argument we may reduce to the case in which the convex set $\Omega^*$ is a convex cone $\Sigma^*$.
Then, we separate into two cases, depending on the ``size'' of the convex cone $\Sigma^*$.
If $\Sigma^*$ has zero measure, then $u_0$ is in fact a global solution, and has subquadratic growth.
By $C^2$ regularity estimates this is not possible, and thus $\Sigma^*$ can not have zero measure.
If $\Sigma^*$ has nonempty interior, by convexity of $u_0$ this means that we have a cone of directional derivatives satisfying $\partial_e u_0\geq0$ in $\R^n$.
Then, by a boundary Harnack type estimate (that we also establish here), we prove that all such derivatives have to be comparable in $\R^n$, and that this yields that the cone must be a half-space.

Once we have that blow-ups are 1D on $\{x_n=0\}$, we show that the free boundary $\partial\{u=\varphi\}$ is Lipschitz in a neighborhood of any regular point $x_0$, and $C^1$ at that point.
Finally, by a barrier argument we show that the regular set is open ---with all points in a neighborhood satisfying a uniform nondegeneracy condition.
From here, we deduce that the free boundary is $C^1$ at every point in a neighborhood, with a uniform modulus of continuity.

Notice that an important step in the previous argument is the boundary Harnack type result for the derivatives $\partial_eu_0$, which solve an equation with bounded measurable coefficients in non-divergence form.
The boundary Harnack principle for non-divergence equations is known to be false in $C^{0,\alpha}$ domains of $\R^n$ whenever $\alpha\leq\frac12$; see \cite{BB}.
Still, we prove here that a weaker version of the boundary Harnack principle holds in ``slit'' domains of the form $\R^n\setminus\Sigma^*$, where $\Sigma^*\subset\R^{n-1}\times\{0\}$ is a convex cone.
The proof of such boundary Harnack type estimate is new, and we think it could be of independent interest.

Finally, notice also that our boundary Harnack type result allows us to show that blow-ups are 1D, but does \emph{not} yield the $C^{1,\alpha}$ regularity of free boundaries.
This is because the constants in such boundary Harnack estimate degenerate as the cone $\Sigma^*$ contains two rays forming an angle approaching $\pi$.

\subsection{Plan of the paper}

The paper is organized as follows.

In Section \ref{sec2} we construct some barriers that are needed in our proofs, and prove a maximum principle in $\R^n_+$ for functions $u$ with sublinear growth.
In Section \ref{sec3} we establish our boundary Harnack type inequality for non-divergence equations with bounded measurable coefficients.
In Section \ref{sec4} we prove that global convex solutions with subquadratic growth to the fully nonlinear thin obstacle problem are necessarily 1D on $\{x_n=0\}$.
In Section \ref{sec5} we show that at any regular free boundary point there is an appropriate rescaling such that the rescaled solutions converge in the $C^1$ norm to a global convex solution with subquadratic growth.
In Section \ref{sec6} we prove that the free boundary is flat Lipschitz by combining the results of Section~\ref{sec5} with a maximum principle argument.
Finally, in Section \ref{sec7} we show by a barrier argument that the regular set is open, which yields the $C^1$ regularity of the free boundary.

\section{Preliminaries and tools}
\label{sec2}

We prove here some results that will be used in the paper.
We will denote
\[M^+ u = M^+(D^2 u)\quad \mbox{and}\quad  M^- u = M^-(D^2 u),\]
the Pucci extremal operators; see \cite{CC} for their definition and basic properties.

Throughout the paper we call constants depending only on the dimension $n$ and the ellipticity constants $\lambda,\Lambda$ {\em universal constants}. 
Also, we denote $B^+$ the half ball $B\cap \{x_n>0\}$, where $B$ is some ball centered at some point on $\{x_n=0\}$, and we denote by $B^*$, $\Sigma^*$, and $\Omega^*$, ``thin'' balls, cones, and sets contained on $\{x_n=0\}$.

\subsection{Barriers}

We first construct two barriers.

\begin{lem} \label{lemsupersol0}
For $N = (n-1)\Lambda/\lambda $  the function
\[ \phi_0(x) =
\begin{cases}
\min\{1,  |x'|^2 + N (2x_n-x_n^2)\} \quad&\mbox{in }|x'|\le 1, \ 0\le x_n\le1
\\
1  &\mbox{elsewhere in } x_n\ge0
\end{cases}\]
is continuous (viscosity) supersolution of $M^+ \phi_0\le 0$ in $x_n>0$.
\end{lem}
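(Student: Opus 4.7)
The plan is to verify the two required properties separately: continuity of $\phi_0$, and the viscosity supersolution inequality $M^+\phi_0\le 0$ in $\{x_n>0\}$. For the continuity I would just check that along the interfaces $|x'|=1$ with $0\le x_n\le 1$, and $x_n=1$ with $|x'|\le 1$, the smooth candidate $g(x):=|x'|^2+N(2x_n-x_n^2)$ satisfies $g\ge 1$, so that in the $\min\{1,g\}$ piece one simply picks up the value $1$ and matches the outer piece $\phi_0\equiv 1$.

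For the supersolution property the core computation is to verify that $M^+ g = 0$ with the given choice $N=(n-1)\Lambda/\lambda$. The Hessian of $g$ is the diagonal matrix with entries $2$ (with multiplicity $n-1$) and $-2N$ (in the $x_n x_n$ slot), so by the definition of $M^+$,
\[
M^+(D^2 g) = 2\Lambda(n-1) - 2\lambda N = 2\Lambda(n-1) - 2\Lambda(n-1) = 0.
\]

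With this in hand I would split the verification at an arbitrary point $x_0\in\{x_n>0\}$ into two cases according to the value of $\phi_0(x_0)$. If $\phi_0(x_0)<1$, then necessarily $|x_0'|<1$, $0<x_{0,n}<1$, and $g(x_0)<1$; by continuity of $g$ the identity $\phi_0=g$ holds in a whole neighbourhood of $x_0$, so $\phi_0$ is classically smooth there and the computation above gives $M^+\phi_0(x_0)=0$. If instead $\phi_0(x_0)=1$, I would use the global fact that $\phi_0\le 1$ throughout $\{x_n\ge 0\}$: any test function $\varphi\in C^2$ touching $\phi_0$ from below at $x_0$ then satisfies $\varphi(x)\le\phi_0(x)\le 1=\varphi(x_0)$ in a neighbourhood, so $x_0$ is a local maximum of $\varphi$ and $D^2\varphi(x_0)\le 0$. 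Since $M^+$ sends negative semidefinite matrices to nonpositive numbers, we conclude $M^+(D^2\varphi(x_0))\le 0$, which is exactly the viscosity supersolution inequality.

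There is no real obstacle to overcome here; the whole point of the construction is that the quadratic $g$ is handpicked so that $M^+g$ vanishes, and that capping with the constant $1$ is absorbed for free by the ``test-function-at-a-local-max'' argument, avoiding any delicate computation along the transition surface $\{g=1\}$. The only subtle point I would be careful about is not attempting to write $\phi_0$ as $\min\{1,g\}$ globally in $\{x_n>0\}$ (which fails when $x_n>2$ since then $g\to-\infty$), and instead arguing pointwise as above.
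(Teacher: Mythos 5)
Your proposal is correct, and its core is the same as the paper's: the whole lemma rests on the computation $M^+(D^2g)=2(n-1)\Lambda-2\lambda N=0$ for the quadratic $g(x)=|x'|^2+N(2x_n-x_n^2)$ with $N=(n-1)\Lambda/\lambda$, together with the observation that $g\ge 1$ on the interfaces $\{|x'|=1,\ 0\le x_n\le 1\}$ and $\{x_n=1,\ |x'|\le1\}$, which gives continuity. Where you diverge is in how the capping at the constant $1$ is justified: the paper simply invokes that the minimum of two supersolutions is a supersolution, which taken literally presupposes $\phi_0=\min\{1,g\}$ on all of $\{x_n>0\}$ --- and, as you correctly point out, this identity fails for $x_n>2$ (there $g<1$ while $\phi_0\equiv1$), so the paper's one-line justification implicitly needs a localization. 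Your pointwise viscosity argument --- at points with $\phi_0<1$ the function coincides with the smooth $g$ nearby, and at points with $\phi_0=1$ any $C^2$ test function touching from below has a local maximum there, hence $D^2\varphi\le0$ and $M^+(D^2\varphi)\le0$ --- covers every point of $\{x_n>0\}$ directly, including the transition surfaces, and so is a slightly more careful rendering of the same idea; what it costs is a short case analysis in place of citing the standard min-of-supersolutions fact.
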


\begin{proof}
We note that $|x'|^2 + N (2x_n-x_n^2) \ge |x'|^2+ |x_n|^2 \ge |x|$ and thus $\phi_0$ is continuous. 
Also, where $\phi_0<1$ we have $M^+\varphi_0 = 2(n-1)\Lambda - 2N\lambda \le0$. 
Thus, using that the minimum of two supersolutions is a supersolution we easily obtain that $M^+ \phi_0\le 0$ in all of $\R^n$.
\end{proof}

\begin{lem} \label{lemsupersol}
Let $a_i\ge0$ with $\sum_{i=0}^\infty a_i  <\infty$.
Then, the function
\[\phi (x) = \sum_{i=0}^k  2^{i}a_i\phi_0(2^{-i}x)\]
is a continuous (viscosity) supersolution of $M^+\phi \le 0$ in all of $x_n>0$. Moreover, $\phi$ satisfyies
\begin{equation}
2^j a_j \le \phi\quad \mbox{in }\overline{B_{2^{j+1}}^+\setminus B_{2^j}^+}
\end{equation}
and
\begin{equation}\label{control}
\phi \le C\left(\sum_{i=0}^{j} 2^{i}a_i + \sum_{i=j}^\infty a_i \right) \quad \mbox{in }\overline{B_{2^j}^+}
\end{equation}
where $C$ is a universal constant.
\end{lem}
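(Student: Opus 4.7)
The plan is to verify the three assertions by exploiting the 2-homogeneity and subadditivity of the Pucci operator $M^+$ together with the explicit shape of the one-scale barrier $\phi_0$. For the supersolution claim, observe that by 2-homogeneity each summand $u_i(x):=2^i a_i\,\phi_0(2^{-i}x)$ inherits $M^+u_i\le 0$ from Lemma~\ref{lemsupersol0}, since $M^+u_i(x)=2^i a_i\cdot 2^{-2i}(M^+\phi_0)(2^{-i}x)\le 0$. Now $M^+$ is subadditive in its matrix argument (as a supremum of linear functionals it is convex and 1-homogeneous, hence sublinear), so a finite sum of \emph{classical} supersolutions is again one. Moreover each $u_i$ is semi-concave (being the scaled minimum of a smooth quadratic and a constant), so the conclusion carries over to the viscosity setting by standard stability arguments for Pucci operators (cf.\ \cite{CC}), giving $M^+\phi\le 0$ in $\{x_n>0\}$.

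For the lower bound I would first observe that $\phi_0(y)=1$ whenever $|y|\ge 1$ and $y_n\ge 0$: if $|y'|>1$ or $y_n>1$ then $\phi_0\equiv 1$ by definition, while if $|y'|\le 1$ and $0\le y_n\le 1$ the elementary inequalities $2y_n-y_n^2\ge y_n^2$ on $[0,1]$ together with $N\ge 1$ give
\[
|y'|^2+N(2y_n-y_n^2)\ge |y'|^2+y_n^2=|y|^2\ge 1,
\]
so the truncation by $1$ is active. Setting $y=2^{-j}x$ for $x\in\overline{B_{2^{j+1}}^+\setminus B_{2^j}^+}$ yields $u_j(x)=2^j a_j$; since all other $u_i$ are nonnegative, $\phi\ge 2^j a_j$ on this annulus.

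For the upper bound on $\overline{B_{2^j}^+}$ I would split the sum at the index $j$. When $i\le j$, the crude bound $\phi_0\le 1$ yields a contribution $\sum_{i=0}^j 2^i a_i$. When $i>j$, the point $2^{-i}x$ has modulus at most $2^{j-i}\le 1/2$, and on $\{|y|\le 1,\,y_n\ge 0\}$ the linear estimate
\[
\phi_0(y)\le |y'|^2+2Ny_n\le (1+2N)|y|
\]
produces $2^i a_i\,\phi_0(2^{-i}x)\le C\,2^j a_i$; summing contributes $C\,2^j\sum_{i>j}a_i$. Combining the two ranges gives the estimate \eqref{control}, the factor $2^j$ appearing naturally in front of the tail sum. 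The only delicate point is the sum rule for viscosity supersolutions of $M^+\le 0$, which hinges on the convexity and 1-homogeneity of $M^+$; the rest is a direct two-sided bookkeeping on the explicit bump $\phi_0$.
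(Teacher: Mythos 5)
Your argument is, in substance, the same as the paper's: termwise scaling of the barrier from Lemma \ref{lemsupersol0} together with sublinearity of $M^+$ for the supersolution property, the fact that $\phi_0\equiv 1$ outside $B_1^+$ for the lower bound, and $\phi_0(y)\le C\min\{1,|y|\}$ with the splitting of the sum at $i=j$ for the upper bound. The one substantive point is the last step of your upper bound: what your bookkeeping actually gives for the tail is $C\,2^j\sum_{i>j}a_i$, and this factor $2^j$ cannot be removed. Indeed, take a single nonzero $a_i$ and a point with $x_n$ comparable to $2^j$ with $j<i$: there $\phi\ge N x_n\sim 2^j$, while the right-hand side of \eqref{control} as printed is of order $a_i$. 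So what you have proved is the inequality $\phi\le C\bigl(\sum_{i=0}^{j}2^{i}a_i+2^j\sum_{i\ge j}a_i\bigr)$ in $\overline{B_{2^j}^+}$, which agrees with \eqref{control} only for $j=0$; the printed \eqref{control}, and the second inequality in \eqref{byabove} of the paper's own proof, omit this factor. Rather than saying your estimate ``gives \eqref{control}'', you should state it as this corrected form, and note that wherever \eqref{control} is invoked with $j\ge1$ (e.g.\ in the proof of Proposition \ref{prop-extensions}) the extra $2^j$ has to be carried along.

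Two smaller points. The lemma concerns the full series (the upper limit $k$ in the statement is a typo for $\infty$), while you only treat finite sums: add that the tail bound you already have gives locally uniform convergence of the partial sums on each $\overline{B_{2^j}^+}$, so $\phi$ is continuous and $M^+\phi\le 0$ passes to the limit by stability of viscosity supersolutions --- this is exactly how the paper concludes, via the partial sums $\phi^k$. Also, for the viscosity-level sum rule you can avoid the appeal to semiconcavity: each finite sum is the minimum of finitely many smooth classical supersolutions (choosing, for each $i$, either the quadratic branch or the constant branch of $\phi_0(2^{-i}\cdot)$), and a minimum of supersolutions is a viscosity supersolution.
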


\begin{proof}
Let $\phi_0$ be the supersolution from Lemma \ref{lemsupersol0}.
We then consider, for $k\ge0$
\[
\phi^{k}(x) = \sum_{i=0}^k  2^{i}a_i\phi_0(2^{-i}x)
\]

On one hand, we have
\[
M^+ \phi^{k}(x) \le  \sum_{i=0}^k  2^{i-2}a_i M^+\phi_0(2^{-i}x) \le 0.
\]
On the other hand, whenever $k\ge j$ and $|x|\ge 2^j$ we have
\begin{equation}\label{bybelow}
\phi^{k}(x)\ge 2^{j}a_j\phi_0(2^{-j}x) \ge 2^ja_j,
\end{equation}
since we readily check that $\phi_0 \ge \min\{1,|x'|^2+|x_n|^2\}=1$ outside $B_1^+$ (in a$x_n>0$).

Finally, we note that $\phi_0 \le C \min\{1, |x'|+|x_n|\}$ and thus
\begin{equation}\label{byabove}
\phi^{k}(x) \le C\sum_{i=0}^k  2^{i}a_i\min\{1,2^{-i}|x|\} \le C\left(\sum_{i=0}^{j} 2^{i}a_i + \sum_{i=j}^\infty a_i \right) \quad \mbox{for }x\in B_{2^j}^+.
\end{equation}
Then, the monotone increasing sequence $\phi^k$ converges locally uniformly in $\{x_n>0\}$ to some function $\phi=\phi^\infty$. By the stability of viscosity supersolutions under uniform convergence we have  $M^+\phi\le0$ in all of $\R^n$. That $\phi$ satisfies the other conditions of the lemma is easily verified letting $k\to\infty$ in \eqref{bybelow} and \eqref{byabove}.
\end{proof}

The following subsolution will be used in the proof of our boundary Harnack inequality.

\begin{lem} \label{lemsubsol}
Given $\rho\in(0,1)$ and a ball $B^*= B^*_r(z)$ ($z\in\R^{n-1}$), with $B^*$ contained in $B_1^*$,  there is a function $\phi\in C(B_1)$ satisfying
\begin{equation}\label{subsol1}
\begin{cases}
M^- \phi \ge \chi_{B_{1-\rho}} \quad &\mbox{in }B_1\setminus {B^*} \\
\phi \ge 0 & \mbox{in }B_1 \\
\phi \le C \chi_{B^*}  &\mbox{on } B^*_{1}\\
\phi = 0  &\mbox{on }{\partial B_{1}}
\end{cases}
\end{equation}
where $C$ depends only on $\rho$, $B^*$ and universal constants.
\end{lem}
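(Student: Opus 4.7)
The plan is to construct $\phi$ by solving a Dirichlet problem for $M^-\phi^+=\chi_{B_{1-\rho}^+}$ on the upper half-ball $B_1^+$ with carefully tailored boundary data, and then extending by even reflection across $\{x_n=0\}$. The even extension automatically creates a ``V''-shaped crease along the slit $B_1^*\setminus B^*$, which is what will make the viscosity subsolution condition $M^-\phi\ge 1$ hold vacuously at slit points.

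Concretely, I would fix a continuous $g:\partial B_1^+\to[0,C_0]$ (with $C_0>0$ large, to be chosen), vanishing on the spherical cap $\partial B_1\cap\{x_n\ge 0\}$ and on the slit $B_1^*\setminus B^*$, equal to $C_0$ on a slightly smaller concentric disk $B^{**}\Subset B^*$, and interpolating smoothly on $B^*\setminus B^{**}$. I would let $\phi^+\in C(\overline{B_1^+})$ be the unique viscosity solution of $M^-\phi^+=\chi_{B_{1-\rho}^+}$ in $B_1^+$ with $\phi^+=g$ on $\partial B_1^+$, and set $\phi(x',x_n):=\phi^+(x',|x_n|)$ in $B_1$. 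The conditions $\phi=0$ on $\partial B_1$ and $\phi\le C_0\chi_{B^*}$ on $B_1^*$ are immediate, and $M^-\phi\ge\chi_{B_{1-\rho}}$ holds pointwise in $B_1\setminus\{x_n=0\}$ by the equation on $B_1^+$ together with the symmetry of $M^-$ under reflection in $x_n$. At a slit point $(x_0',0)$, once $\phi^+\ge 0$ is proved, the Hopf lemma gives $\partial_{x_n}\phi^+(x_0',0^+)>0$, so the even extension has an upward crease in the $x_n$-direction; no $C^2$ test function can then touch $\phi$ from above at $(x_0',0)$, and the viscosity inequality $M^-\phi\ge 1$ is vacuously satisfied there.

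The main obstacle is the nonnegativity $\phi^+\ge 0$, which is nontrivial because $M^-$-subsolutions with zero boundary data on parts of the boundary naturally dip negative inside. My plan is to decompose: let $h$ be the Pucci-harmonic extension of $g$ to $B_1^+$ ($M^-h=0$, $h=g$ on $\partial B_1^+$); by the maximum principle $h\ge 0$, and by interior Harnack $h\ge c_1 C_0$ on any fixed compact set $K\Subset B_1^+$, with $c_1$ depending on $K$, $\rho$, $B^*$. Let $u_0$ solve $M^-u_0=\chi_{B_{1-\rho}^+}$ with zero boundary data; by ABP, $-C_2\le u_0\le 0$ with $C_2$ universal. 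The sub-additivity $M^-(A+B)\ge M^-A+M^-B$ yields $M^-(h+u_0)\ge\chi_{B_{1-\rho}^+}$ with boundary data matching $\phi^+$, so by comparison $\phi^+\ge h+u_0\ge h-C_2$. Choosing $C_0$ large enough that $c_1 C_0\ge C_2$ then gives $\phi^+\ge 0$ on $K$, and in the annular region $B_1^+\setminus K$ (where $\chi_{B_{1-\rho}^+}=0$ and $h$ may be small) the nonnegativity follows from a localized boundary barrier built from $\phi_0$ of Lemma~\ref{lemsupersol0}. The constant $C=C_0$ in the statement therefore depends only on $\rho$, $B^*$, and universal constants, as required.
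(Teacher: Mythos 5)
Your overall architecture (solve $M^-\phi^+=\chi_{B_{1-\rho}^+}$ in the half ball with data vanishing on the slit and large on a disk inside $B^*$, then reflect evenly so that the crease makes the viscosity inequality vacuous on the slit) is essentially the paper's construction: the paper keeps the data of unit size on $B^*$ and scales the forcing down by a small factor $\kappa$, which after multiplying by $1/\kappa$ is the same as your ``large $C_0$'' normalization. However, your proof of nonnegativity has a genuine gap exactly where the lemma is delicate: the points of $B_1^+$ lying just above the slit $B_1^*\setminus B^*$ and inside $B_{1-\rho}$, where the boundary data is zero \emph{and} the forcing is active. Your two cases do not cover this region: the parenthetical claim that $\chi_{B_{1-\rho}^+}=0$ on $B_1^+\setminus K$ is false for any compact $K\Subset B_1^+$, since $B_{1-\rho}^+$ reaches the hyperplane; and there the bound $\phi^+\ge h+u_0\ge h-C_2$ is useless because $h\to 0$ at the slit while $u_0$ is strictly negative, so no choice of $C_0$ makes $h-C_2\ge 0$ up to the slit. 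What is needed (and what the paper supplies) is a quantitative barrier step in the strip near $\{x_n=0\}$: sliding the exponential barrier $\eta(x)=e^{-N|x|}-e^{-N\rho/2}$ centered at height $\rho/2$, whose $M^-$ is bounded below by a positive constant, and using that the interior values of $\phi^+$ are of size $\sim C_0$ (large) to dominate the unit forcing; the outer annulus $B_1^+\setminus B_{1-\rho/2}$, where the forcing vanishes, is then handled by the plain maximum principle.

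A second, related error is the appeal to Hopf's lemma to get $\partial_{x_n}\phi^+(x_0',0^+)>0$. Hopf applies to supersolutions; your $\phi^+$ satisfies $M^-\phi^+=\chi\ge 0$, i.e.\ it is a subsolution, and a nonnegative subsolution vanishing at a flat boundary point need not have positive normal derivative (e.g.\ $x_n^2$). The strict crease is genuinely needed at slit points inside $B_{1-\rho}$: mere nonnegativity only forces $D^2P(x_0)\ge 0$ for a test function touching from above, which rules out $M^-(D^2P)<0$ but not $M^-(D^2P)<1$. The fix is the same barrier as above: the sliding exponential barrier gives $\phi^+(x_0',t)\ge c(C_0)\,t$ at all slit points with $|x_0'|\le 1-\rho/2$ (which covers $B_{1-\rho}$), provided $C_0$ is chosen large compared with the forcing, and this simultaneously yields nonnegativity and the strict slope. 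Minor further points you should tidy up: the superadditivity $M^-(h+u_0)\ge M^-h+M^-u_0$ is not automatic for viscosity solutions (here it can be justified since $M^-$ is concave, so $h$, and $u_0$ with a continuous right-hand side $f_0$ with $\chi_{B_{1-\rho}}\le f_0\le\chi_{B_{1-\rho/2}}$ as in the paper, are interior $C^{2,\alpha}$ by Evans--Krylov), and ``interior Harnack gives $h\ge c_1C_0$ on $K$'' requires first a lower barrier near $B^{**}$ to transfer the boundary value $C_0$ to an interior point before running a Harnack chain.
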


\begin{proof}
Let $g_0$ be the restriction to $\partial B_1^+$ of the function $\max\{0, 1-(x-z)^2/r^2\}$ and $f_0(x) = f_0(|x|)$ be a radial nonincreasing function with $f_0=0$ for $|x|\ge 1-\rho/2$ and $f_0=1$ for $|x|\le 1-\rho$.

For $\kappa\in(0,1)$ small, we let $\psi$ be the solution to
\begin{equation}\label{pbtau}
\begin{cases}
M^-\psi_\kappa = \kappa f_0\quad& \mbox{in }B_1^+\\
\psi=  g_0 & \mbox{on }\partial B_1^+
\end{cases}
\end{equation}
Let us show that $\kappa$ small enough (depending only on  $\rho$ and $B^*$) we have $\psi\ge 0$ in~$B_1^+$.

Indeed, by the strong maximum principle and Hopf's lemma, for $\kappa=0$ we have
\[
\psi_0 \ge \delta_0 >0 \quad \mbox{in }B_{1-\rho/4}\cap\{x_n> \rho/4\}.
\]
Thus, by the uniqueness of solution to \eqref{pbtau} and the stability of viscosity solutions we deduce that
\begin{equation}\label{zonepsiposi}
\psi_ \kappa \ge \delta_0/2 >0 \quad \mbox{in }B_{1-\rho/4}\cap\{|x_n|> \rho/4\}.
\end{equation}
for $ \kappa$ small.

Next,  for $N$ large enough the function $ \eta =  \exp(-N|x|) -\exp(-N\rho/2)$ satisfies
\begin{equation}\label{Hopf}
M^- \eta = \left( \lambda N^2 - \frac{\Lambda N (n-1)}{|x|}\right) \eta >0 \quad\mbox{in } \{|x|\ge \rho/4\}\cap \{\eta>0\}.
\end{equation}
Thus, we have $M^- \eta \ge c>0$ in $\{ \rho/4 \le |x| \le \rho/2\}$ and using $\frac{\delta_0}{2}\eta(x-x_0)$ as a barrier (by below) with $x_0$ on $\{|x'|\le 1-\rho/2, \ x_n = \rho/2\}$, and by \eqref{zonepsiposi} we obtain
\begin{equation}\label{thegoal}
\psi_ \kappa \ge 0 \quad \mbox{in }B_{1-\rho/2}^+
\end{equation}
when $\kappa$ is chosen small enough.

Finally, from \eqref{thegoal} it follows that (still for $\kappa$ small) we have $\psi_\kappa \ge 0$ in all of $B_{1}^+$. Here we are using that $f_0=0$ in the half annulus $B_1\setminus B_{1-\delta/2}$.

To end the proof, we let $\phi$ be the even reflection of the previous $\frac{1}{\kappa}\psi_\kappa$ with respect to the variable $x_n$ multiplied by a large positive constant $C$. Then, using that $\phi$ will have a negative wedge on $B^*_1\setminus B^*$ it not difficult to verify that it will satisfy all the requirements of the lemma.
\end{proof}

\subsection{A maximum principle in $\R^n_+$ and construction of 1D solutions}

We next prove the following.

\begin{lem}\label{lem-sumablegrowth}
Let $u$ satisfy
\begin{equation}\label{sumablegrowth}
\sup_{B_1^+} |u| + \sum_{i=0}^\infty 2^{-i}\sup_{B_{2^{i+1}}^+\setminus B_{2^i}^+} |u| < \infty
\end{equation}
and
\[
\begin{cases}
M^-u \le0  \quad  \mbox{(resp.  $M^+u \ge0$)} \  &\mbox{in }\{x_n>0\}\\
u \ge 0 \quad \mbox{(resp.  $u \le0$)}&\mbox{on }\{x_n=0\}.
\end{cases}
\]
Then, $u\ge 0$ (resp. $u \le0$) in $\{x_n>0\}$.
\end{lem}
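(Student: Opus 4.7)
The plan is to use the supersolution $\phi$ from Lemma \ref{lemsupersol} as a barrier, tuning its coefficients so that $\phi$ dominates $|u|$ on the dyadic annuli at infinity while being arbitrarily small at any prescribed interior point. Write $b_i:=\sup_{B_{2^{i+1}}^+\setminus B_{2^i}^+}|u|$, so that \eqref{sumablegrowth} reads $\sum_i 2^{-i}b_i<\infty$. Fix any $x_0$ with $(x_0)_n>0$. For every integer $K$ with $|x_0|<2^K$, I would apply Lemma \ref{lemsupersol} with the summable coefficients $a_i^{(K)}:=0$ for $i\le K$ and $a_i^{(K)}:=2^{-i}b_i$ for $i>K$. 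The resulting continuous nonnegative function $\phi_K$ satisfies $M^+\phi_K\le 0$ in $\{x_n>0\}$ and, by the lower bound of Lemma \ref{lemsupersol}, $\phi_K\ge b_j$ on $\overline{B_{2^{j+1}}^+\setminus B_{2^j}^+}$ for all $j>K$.

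Next I would compare $u$ and $-\phi_K$ on the bounded half-ball $B_{2^{K+1}}^+$. Since $M^-(-\phi_K)=-M^+\phi_K\ge 0$, the function $-\phi_K$ is a viscosity $M^-$-subsolution, whereas $u$ is a viscosity $M^-$-supersolution. On the flat part $B_{2^{K+1}}\cap\{x_n=0\}$ we have $u\ge 0\ge -\phi_K$; on the spherical cap $\partial B_{2^{K+1}}\cap\{x_n\ge 0\}$, which sits inside $\overline{B_{2^{K+2}}^+\setminus B_{2^{K+1}}^+}$, we have $\phi_K\ge b_{K+1}\ge |u|$ and hence $u\ge -\phi_K$. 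The standard comparison principle for Pucci operators (see \cite{CC}) then yields $u\ge -\phi_K$ throughout $B_{2^{K+1}}^+$; in particular $u(x_0)\ge -\phi_K(x_0)$.

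To close the argument I invoke the upper bound \eqref{control} at $x_0\in\overline{B_{2^K}^+}$. Since $a_i^{(K)}=0$ for every $i\le K$,
\[
\phi_K(x_0)\le C\left(\sum_{i=0}^{K}2^i a_i^{(K)}+\sum_{i=K}^{\infty}a_i^{(K)}\right)=C\sum_{i>K}2^{-i}b_i,
\]
which is a tail of a convergent series and therefore tends to $0$ as $K\to\infty$. Thus $u(x_0)\ge 0$, and since $x_0$ was arbitrary the first statement is proved; the second case (with $M^+u\ge 0$ and $u\le 0$ on $\{x_n=0\}$) reduces to the first by applying what we just proved to $-u$. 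The main point to articulate carefully in the full write-up is the choice of the coefficients $a_i^{(K)}$: zeroing them for $i\le K$ makes the interior estimate at $x_0$ collapse to the pure tail $\sum_{i>K}2^{-i}b_i$, while leaving $a_i^{(K)}=2^{-i}b_i$ for $i>K$ still makes $\phi_K$ dominate $|u|$ on the far-away annuli that form the curved boundary of $B_{2^{K+1}}^+$, which is precisely what the comparison step needs.
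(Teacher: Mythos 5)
Your argument is correct, and it takes a somewhat different route from the paper's. The paper also builds its barrier from Lemma \ref{lemsupersol} (in fact from the $\phi_0$'s in its proof), but it first invokes the auxiliary series Lemma \ref{lem-series} to replace the summable coefficients $a_i=2^{-i}\sup_{B_{2^{i+1}}^+\setminus B_{2^i}^+}|u|$ by inflated coefficients with $b_i/a_i\to\infty$ and $\sum b_i<\infty$; it then works with a \emph{single} fixed barrier and compares $u$ with $\epsilon$ times that barrier, the point being that for each fixed $\epsilon$ the inflated coefficients eventually dominate $|u|$ on a sequence of spheres going to infinity, and one concludes by letting $\epsilon\to0$. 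You instead avoid Lemma \ref{lem-series} altogether: you truncate the coefficients below level $K$, use the lower bound of Lemma \ref{lemsupersol} to dominate $|u|$ exactly on the curved boundary $\partial B_{2^{K+1}}\cap\{x_n\ge0\}$, and use the interior estimate \eqref{control} to see that the truncated barrier at the fixed point is controlled by the tail $\sum_{i>K}2^{-i}b_i\to0$; letting $K\to\infty$ gives the result. What your version buys is self-containedness (no need for the auxiliary series lemma) and an explicit quantitative bound on how small the barrier is at the point in question; what the paper's version buys is that a single barrier works for all points simultaneously and no use of the upper bound \eqref{control} is needed in this step. Both proofs rest on the same comparison principle on bounded half-balls between the viscosity supersolution $u$ of $M^-$ and the explicit barrier (a locally uniform limit of minima of smooth supersolutions); you state this with the same level of detail as the paper does, which is acceptable here, though in a full write-up a one-line reference to the relevant result in \cite{CC} (viscosity sub/supersolution comparison for Pucci operators) would make the step airtight.
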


For this, we need the following.

\begin{lem}\label{lem-series}
Let $(a_k)$ be a sequence such that $a_k\geq0$ and $\sum_{k\geq1}a_k<\infty$.
Then, there exists a sequence $(b_k)$ such that $b_k/a_k \ge 1$, $\lim b_k/a_k=\infty$, and $\sum_{k\geq1}b_k<\infty$.
\end{lem}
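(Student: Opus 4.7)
The plan is to use the classical Abel--Dini trick based on tail sums. Set
$$R_k := \sum_{j\ge k} a_j,$$
which is finite by hypothesis and satisfies $R_k \downarrow 0$. The candidate amplification factor is $c_k := R_k^{-1/2}$, which tends to $+\infty$; the only thing to verify is that the enlarged series $\sum_k c_k a_k$ still converges.

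The key step is the telescoping estimate
$$\frac{a_k}{\sqrt{R_k}} \;=\; \frac{R_k - R_{k+1}}{\sqrt{R_k}} \;\le\; 2\bigl(\sqrt{R_k}-\sqrt{R_{k+1}}\bigr),$$
obtained from the identity $R_k - R_{k+1} = (\sqrt{R_k}-\sqrt{R_{k+1}})(\sqrt{R_k}+\sqrt{R_{k+1}})$ together with the trivial bound $\sqrt{R_k}+\sqrt{R_{k+1}} \le 2\sqrt{R_k}$. Summing gives $\sum_k a_k/\sqrt{R_k} \le 2\sqrt{R_1} < \infty$. I would then define
$$b_k := \max\bigl\{1,\,R_k^{-1/2}\bigr\}\, a_k,$$
so that $b_k/a_k \ge 1$, $b_k/a_k \to \infty$, and $\sum_k b_k \le R_1 + 2\sqrt{R_1} < \infty$, which are precisely the required properties.

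The only minor obstacle is a bookkeeping issue at indices where $a_k = 0$ (so $R_k = R_{k+1}$), where the ratio $b_k/a_k$ is not literally defined. This is harmless: one can either restrict to the subsequence of strictly positive terms (which is all that is needed for the intended application to controlling the growth \eqref{sumablegrowth}), or adopt the convention $0/0 = +\infty$ on the zero indices and set $b_k = 0$ there. The argument carries no genuine difficulty beyond choosing the right amplification factor $R_k^{-1/2}$, whose summability-preserving property is exactly what the telescoping identity encodes.
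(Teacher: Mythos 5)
Your proof is correct and is essentially the paper's own argument: the same Abel--Dini device with the tail sums $s_k=\sum_{j\ge k}a_j$, the amplification $a_k/\sqrt{s_k}$, and the identical telescoping bound $a_k/\sqrt{s_k}\le 2(\sqrt{s_k}-\sqrt{s_{k+1}})$ summing to $2\sqrt{s_1}$. The only cosmetic difference is that you enforce $b_k/a_k\ge 1$ via a maximum with $1$ while the paper normalizes $s_1=1$; both are fine.
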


\begin{proof}
The result is probably well known, we give here a proof for completeness.

Let us define $s_k=\sum_{j\geq k}a_j$. Note that may (and do) assume that $s_1= 1$.
Let
\[b_k=\frac{a_k}{\sqrt{\sum_{j\geq k}a_j}}=\frac{a_k}{\sqrt{s_k}}\ge a_k.\]

Notice that $\lim b_k/a_k=\infty$, since $s_k\to 0$.
Then, we have
\begin{equation}\label{telescopic}
b_k=\frac{s_k-s_{k+1}}{\sqrt{s_k}}\leq 2\sqrt{s_k}-2\sqrt{s_{k+1}},
\end{equation}
where we used that $2\sqrt{x}-2\sqrt{y}\geq (x-y)/\sqrt{x}$ for all $x\geq y$ (this follows from the mean value theorem).
Therefore, by \eqref{telescopic}, we find
\[\sum_{k\geq1}b_k\leq 2\sqrt{s_1}<\infty,\]
and the lemma is proved.
\end{proof}

We now give the:

\begin{proof}[Proof of Lemma \ref{lem-sumablegrowth}]
Let $a_i := 2^{-i} \sup_{B_{2^{i+1}}\setminus B_{2^i}} |u|$.
By assumption $\sum a_i<\infty$ and
then, by Lemma \ref{lem-series}, there exists $b_i$ increasing such that $1\le b_i/a_i \to \infty$ and $\sum b_i<\infty$.
Then, we consider
\[ \phi(x) : = - \sup_{B_1^+}|u|-\sum_{i=0}^\infty 2^i b_i \phi_0(2^{-i}x),\]
where $\phi_0$ is the supersolution in the proof of Lemma \ref{lemsupersol}.
Exactly as in the proof of Lemma \ref{lemsupersol} we find that $\phi$ is subsolution in all of $\{x_n>0\}$. Then, using that $u\ge 0$ on $\{x_n=0\}$, that $b_i/a_i \to \infty$, and the maximum principle, we obtain $u\ge -\epsilon\phi$ in all of $\{x_n\ge0\}$ for every $\epsilon>0$. Thus $u\ge 0$ in all of  $\{x_n\ge0\}$.
\end{proof}

As a consequence of Lemma \ref{lem-sumablegrowth}, we find the following.

\begin{prop}[Extensions]\label{prop-extensions}
Given $g: \R^{n-1}\rightarrow \R$ continuous satisfying
\[ \sup_{B_{1}^*} |g| +\sum_{i=0}^\infty 2^{-i}\sup_{B_{2^{i+1}}^*\setminus B_{2^{i}}^*} |g| < \infty   \]
there exist a unique function $u$ belonging to $C(\{x_n>0\})$ which satisfies \eqref{sumablegrowth}
and
\[\begin{cases}
M^+ u = 0 \quad &\mbox{in }\{x_n>0\}\\
u = g &\mbox{on }\{x_n=0\}.
\end{cases}\]
We then denote $E^+g := u$.

Similarly  $E^-g := -E^+(-g) $ is the unique solution, among functions satisfying \eqref{sumablegrowth},  of  the previous problem with $M^+$ replaced by $M^-$.
\end{prop}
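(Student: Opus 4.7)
The plan is to prove the two parts separately. Uniqueness follows at once from Lemma \ref{lem-sumablegrowth}: if $u_1,u_2$ are two solutions satisfying \eqref{sumablegrowth} with the same boundary data $g$, then $w := u_1 - u_2$ vanishes on $\{x_n=0\}$, satisfies \eqref{sumablegrowth}, and (since $M^+u_i = 0$ and $M^+$ is uniformly elliptic) obeys both $M^- w \le 0$ and $M^+ w \ge 0$ in $\{x_n>0\}$. Applying Lemma \ref{lem-sumablegrowth} to $w$ yields $w\ge 0$, and to $-w$ yields $w\le 0$, so $w\equiv 0$.

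For existence, set $a_0 := \sup_{B_1^*}|g|$ and $a_i := 2^{-i}\sup_{B_{2^{i+1}}^*\setminus B_{2^i}^*}|g|$ for $i\ge 1$, so $\sum_i a_i<\infty$. Let $\phi$ be the supersolution of Lemma \ref{lemsupersol} built from these coefficients, and set $\Phi := a_0 + \phi$; since $M^+$ is translation invariant $\Phi$ still satisfies $M^+\Phi\le 0$, and a short check using $\phi_0\ge 1$ outside $B_1^+$ together with the pointwise bound $\phi\ge 2^j a_j$ on $\overline{B_{2^{j+1}}^+\setminus B_{2^j}^+}$ yields $\Phi(x',0)\ge |g(x')|$ for every $x'\in\R^{n-1}$. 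Next, approximate $g$ by continuous compactly supported truncations $g_k$ with $g_k = g$ on $B_{2^k}^*$, $g_k = 0$ outside $B_{2^{k+1}}^*$, and $|g_k|\le |g|$. For each pair $(k,R)$ with $R > 2^{k+1}$, classical Perron--Ishii theory on the bounded domain $B_R^+$ produces a unique continuous viscosity solution $v_{k,R}$ of
\[
M^+ v_{k,R} = 0 \ \text{in } B_R^+, \quad v_{k,R} = g_k \ \text{on } B_R^*, \quad v_{k,R} = 0 \ \text{on } \partial B_R\cap\{x_n>0\},
\]
since the prescribed boundary data match continuously at the corner $\partial B_R^*$ where $g_k$ vanishes. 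Comparing $v_{k,R}$ with $\pm\Phi$ gives $|v_{k,R}|\le\Phi$ uniformly in $R$. Interior Krylov--Safonov/Caffarelli--Cabr\'e estimates together with local barriers at $\{x_n=0\}$ obtained from translated, rescaled copies of $\phi_0$ provide local equicontinuity up to the boundary, so a diagonal extraction as $R\to\infty$ and then $k\to\infty$ yields $u\in C(\{x_n\ge 0\})$ with $M^+u = 0$ in $\{x_n>0\}$, $u = g$ on $\{x_n=0\}$, and $|u|\le \Phi$.

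The growth condition \eqref{sumablegrowth} for $u$ follows from $|u|\le \Phi$ and \eqref{control}: interchanging the order of summation gives
\[
\sum_{j\ge 0} 2^{-j}\sup_{B_{2^{j+1}}^+\setminus B_{2^j}^+}|u| \le C\sum_{j\ge 0} 2^{-j}\left(a_0 + \sum_{i=0}^{j+1}2^{i} a_i + \sum_{i\ge j+1} a_i\right) \le C\sum_{i\ge 0} a_i < \infty.
\]
The operator $E^-$ is then obtained by symmetry via $E^-g := -E^+(-g)$, and inherits uniqueness, existence, and the growth estimate from $E^+$. The step I expect to be the main technical obstacle is the \emph{boundary continuity} of the limit $u$ at a generic point $(x_0',0)$ on $\{x_n=0\}$: one needs a local barrier that vanishes at $x_0'$ and controls the modulus of continuity of $g$ there, uniformly in the approximations $k,R$. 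This is supplied by suitably translated and rescaled versions of $\phi_0$, precisely because $\phi_0$ from Lemma \ref{lemsupersol0} vanishes at the origin on $\{x_n=0\}$, is continuous, and is a supersolution of $M^+\le 0$.
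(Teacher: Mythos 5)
Your proof is correct and follows essentially the same route as the paper: the barrier built from Lemma \ref{lemsupersol}, solving on large half-balls and passing to the limit with the barriers $\pm\Phi$ guaranteeing convergence and boundary continuity (the paper uses a continuous extension $\bar g$ of $g$ instead of your truncations $g_k$, a cosmetic difference), the growth bound \eqref{sumablegrowth} via \eqref{control}, and uniqueness via Lemma \ref{lem-sumablegrowth}. One tiny slip: with your indexing ($a_0=\sup_{B_1^*}|g|$ and $a_i$ only for $i\ge 1$) the barrier need not dominate $|g|$ on the annulus $B_2^*\setminus B_1^*$; simply include the $i=0$ annulus term from the hypothesis, as the paper does, and the claim $\Phi\ge |g|$ on $\{x_n=0\}$ holds.
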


\begin{proof}
Let $a_i = 2^{-i}\sup_{B_{2^{i+1}}^*\setminus B_{2^{i}}^*} |g|$ and
 \[ \phi(x) : =  \sup_{B_1^*}|g| +\sum_{i=0}^\infty 2^i a_i \phi_0(2^{-i}x),\]
By Lemma \ref{lemsupersol} we have $\phi\ge g$ in $x_n=0$ and $M^+\phi\le 0$ in $x_n>0$. On the other hand, using \eqref{control} we find
\[
\begin{split}
\sum_{j=0}^{\infty}2^{-j}\sup_{B_{2^j}} \phi
&\le \sum_{j=0}^{\infty} 2^{-j} \left( \sup_{B_1^*}|g| + C \sum_{i=0}^{j} 2^{i-j}a_i + C\sum_{j=0}^{\infty} \sum_{i=j}^\infty 2^{-j} a_i\right)  \\
&\le  2\sup_{B_1^*}|g| + C\sum_{i=0}^{\infty} \sum_{j=i}^{\infty} 2^{i-j}a_i + 2C \sum_{i=0}a_i
\\
& \le2\sup_{B_1^*}|g| + 4C \sum_{i=0}a_i <\infty.
\end{split}
\]
Thus in particular $\phi$ satisfies \eqref{sumablegrowth} with $u$ replaced by $\phi$.

Now we note that $\phi$ and $-\phi$ are respectively a supersolution and a subsolution of the problem $M^+ u= 0$ in $\{x_n>0\}$,  $u =g$ on $\{x_n=0\}$. Then, we can prove the existence of a continuous viscosity solution between $-\phi$ and $\phi$ in several standard ways.

One option is to choose any continuous extension $\bar g$ of $g$ to $\{x_n>0\}$ such that $|\bar g|\le \phi$ and to solve in large balls  $M^+ u_R= 0$ in $B_R^+$,  $u =\bar g$ in $\partial B_R^+$. Letting $R\uparrow \infty$ and using the stability of viscosity solutions under local uniform converge, we find a solution of the of the problem in all of $x_n>0$. The barriers $\pm\phi$ guarantee the convergence. Another option is to proof the existence of a solution in the half space directly by Perron's method.

The uniqueness of viscosity solution to this problem among continous functions $u$  satisfying \eqref{sumablegrowth} is a straightforward consequence of the maximum principle in Lemma \ref{lem-sumablegrowth} and the fact that the difference $w$ of two solutions satisfies $M^+ w\ge 0$ and $M^-w\le 0$ in $\{x_n>0\}$,  and $w=0$ on $\{x_n=0\}$.
\end{proof}

We next construct 1D solutions in $\R^2_+$.

\begin{prop}\label{prop-1D}
For any $\beta\in(0,1)$, let us consider the function $\varphi_\beta^\pm(x,y):=E^\pm(x_+)^\beta$ in $\R^2_+$.
Then,
\begin{itemize}
\item[(a)] We have
\[\partial_y\varphi_\beta^+=\overline C(\beta) x^{\beta-1}\qquad \textrm{in}\ \{x>0\}\cap\{y=0\},\]
\[\partial_y\varphi_\beta^-=\underline C(\beta) x^{\beta-1}\qquad \textrm{in}\ \{x>0\}\cap\{y=0\}.\]
The constants $\overline C$ and $\underline C$ depend only on $\beta$ and ellipticity constants.

\item[(b)] The functions $\overline C(\beta)$ and $\underline C(\beta)$ are continuous in $\beta$, and there are
\[0<\beta_1<\frac12<\beta_2<1\]
such that
\[\overline C(\beta_1)=0\qquad \textrm{and}\qquad \underline C(\beta_2)=0.\]
Moreover, $\beta_1$ and $\beta_2$ are unique.

\item[(c)] For any small $\delta>0$, we have
\[\frac12-\delta<\beta_1<\frac12<\beta_2<\frac12+\delta\quad\textrm{whenever}\quad |\Lambda-1|+|\lambda-1|\leq \delta/C,\]
with $C$ universal.
\end{itemize}
\end{prop}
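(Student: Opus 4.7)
The strategy is to use the scale-invariance of $M^\pm$ to reduce $\varphi_\beta^\pm$ to a $\beta$-homogeneous function $r^\beta f_\beta^\pm(\theta)$ on $\R^2_+$, and then analyze how the angular flux $(f_\beta^\pm)'(0)$ depends on $\beta$ and on the ellipticity constants.

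\textbf{Part (a).} The datum $(x_+)^\beta$ is positively $\beta$-homogeneous, and both $M^\pm u = 0$ and the growth condition \eqref{sumablegrowth} (which uniquely pins down $E^\pm$ in Proposition~\ref{prop-extensions}) are preserved by the rescaling $u \mapsto r^{-\beta} u(r\,\cdot)$, since $\beta \in (0,1)$ keeps growth sub-linear. Uniqueness therefore forces $\varphi_\beta^\pm(r x) = r^\beta \varphi_\beta^\pm(x)$, so in polar coordinates $\varphi_\beta^\pm = r^\beta f_\beta^\pm(\theta)$ with $f_\beta^\pm(0) = 1$ and $f_\beta^\pm(\pi) = 0$. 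The datum is real-analytic on $\{x>0,y=0\}$, so boundary $C^{1,\alpha}$ estimates give $f_\beta^\pm \in C^1$ up to $\theta = 0$; a direct polar computation then yields $\partial_y \varphi_\beta^\pm(x,0) = x^{\beta-1}(f_\beta^\pm)'(0)$ for $x > 0$. We set $\overline C(\beta) := (f_\beta^+)'(0)$ and $\underline C(\beta) := (f_\beta^-)'(0)$.

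\textbf{Part (b).} Continuity in $\beta$ follows from the stability of viscosity solutions under local uniform convergence, the uniform barrier control of Lemma~\ref{lemsupersol}, and boundary $C^{1,\alpha}$ estimates at any fixed point of $\{x > 0, y = 0\}$. For the existence of zeros I check a sign change on $(0,1)$: as $\beta \to 0^+$ the datum approaches $\chi_{\{x > 0\}}$ and barrier comparison forces $\varphi_\beta^\pm$ to strictly decrease into the interior above $\{x > 0\}$, so $\overline C(\beta), \underline C(\beta) < 0$; as $\beta \to 1^-$ the datum approaches $x_+$, which is a strict $M^+$-subsolution and $M^-$-supersolution (its distributional Hessian is a positive Dirac mass along $\{x = 0\}$), so the extension lies strictly above (resp.\ below) $x_+$ and Hopf's lemma gives $\overline C(\beta), \underline C(\beta) > 0$. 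The intermediate value theorem then produces $\beta_1, \beta_2 \in (0,1)$. For uniqueness and the ordering $\beta_1 < 1/2 < \beta_2$, the equation $M^\pm(r^\beta f(\theta)) = 0$ reduces to a second-order nonlinear ODE on $(0,\pi)$ in which $\overline C(\beta)$ (resp.\ $\underline C(\beta)$) is the unique shooting slope at $\theta = 0$ making $f(\pi) = 0$; a Sturm-type monotonicity for this shooting slope in $\beta$ gives uniqueness of the zero, while the ordering around $1/2$ is forced by comparison with the explicit Laplacian profile.

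\textbf{Part (c).} The same stability argument, now for the family of operators as $(\lambda, \Lambda) \to (1,1)$, gives joint continuity of $(\lambda, \Lambda, \beta) \mapsto \overline C(\beta), \underline C(\beta)$. At $(\lambda, \Lambda) = (1,1)$ the profile is harmonic and explicit, giving $\overline C(\beta) = \underline C(\beta) = -\beta\cot(\beta\pi)$, which has a single transversal zero at $\beta = 1/2$ with slope $\pi/2$. A quantitative continuity argument then yields $|\beta_i - 1/2| \leq C(|\Lambda - 1| + |\lambda - 1|)$ for $(\lambda, \Lambda)$ close to $(1,1)$. The main obstacle in the whole scheme is the uniqueness/monotonicity of the zeros in~(b), which requires a careful analysis of the angular ODE; the remaining steps reduce to homogeneity, stability, comparison, and Hopf's lemma.
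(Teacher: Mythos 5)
There is a genuine gap in your part (b), at the step where you determine the sign of $\overline C(\beta)$ and $\underline C(\beta)$ as $\beta\uparrow 1$. Your argument treats the limit datum $x_+$ as if the extensions converged to something attached to it, but there is no compactness in this limit: the datum $(x_+)^\beta$ approaches critical (linear) growth, for which the hypothesis \eqref{sumablegrowth} fails, and in fact $\|\varphi_\beta^\pm\|_{L^\infty(B_{1/4}(1,0))}$ diverges as $\beta\uparrow1$ (the paper proves the quantitative blow-up $\underline C(\beta)\geq c/(1-\beta)$, which is exactly why no stability-in-$\beta$ argument can be run up to $\beta=1$). Moreover, the comparison you invoke does not go in the direction you need: the function $(x,y)\mapsto(x_+)^\beta$, extended constantly in $y$, has Hessian $\beta(\beta-1)x^{\beta-2}e_1\otimes e_1\le 0$ on $\{x>0\}$, so it is a \emph{strict supersolution} of $M^-$ (and of $M^+$) there, and at the ridge $\{x=0\}$ it fails the supersolution test; hence the maximum principle does not place $E^\pm(x_+)^\beta$ above it, and the ``positive Dirac mass along $\{x=0\}$'' of the limit $x_+$ is not inherited by $(x_+)^\beta$ for $\beta<1$, where the concave part $\sim x^{\beta-2}$ is non-integrable near the corner. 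The paper closes precisely this point with a quantitative construction: a dyadic sum $\sum_i 2^{\beta i}\psi(2^{-i}x-\tfrac12,2^{-i}y)$ of translated, rescaled copies of the bump subsolution of Lemma \ref{lemsubsol}, which sits below $\varphi_\beta^-$ and forces $\varphi_\beta^-\gtrsim \frac{1}{1-\beta}\,y$ near $x=1$, whence $\underline C(\beta)\to+\infty$. Without this (or some substitute barrier), your sign change on $(0,1)$, and hence the existence of $\beta_1,\beta_2$, is not established.

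Two further steps are asserted rather than proved, and you should be aware the paper has short complete arguments for both. For uniqueness and the ordering $\beta_1<\tfrac12<\beta_2$ you propose a Sturm-type monotonicity for the angular shooting problem, which you yourself flag as the main obstacle; the paper avoids the ODE entirely by a sliding/contact argument: if $\beta<\beta'$, a horizontal translate of $\varphi_\beta^+$ touches $\varphi_{\beta'}^+$ from below at a point of $\{x>0,y=0\}$, homogeneity makes the sign of $\partial_y$ constant along that ray, and Hopf's lemma gives strict sign monotonicity of $\overline C$ in $\beta$ (the same comparison against the harmonic extension of $(x_+)^{1/2}$, which is an $M^+$-subsolution and $M^-$-supersolution, gives $\beta_1<\tfrac12<\beta_2$). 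For (c), soft stability only yields a qualitative modulus, whereas the statement requires the linear rate $|\Lambda-1|+|\lambda-1|\le\delta/C$; the paper obtains it with the explicit nearly homogeneous barrier $\psi$ solving $\Delta\psi=-\kappa r^{-3/2-\delta}$ with data $(x_+)^{1/2-\delta}$ and $\kappa\simeq\delta$, for which $M^+\psi\le\lambda\Delta\psi+C(\Lambda-\lambda)\le0$ precisely when $|\Lambda-1|+|\lambda-1|\lesssim\delta$, followed by the same contact argument. Your transversality claim for $-\beta\cot(\beta\pi)$ at $\beta=\tfrac12$ is correct, but to make it quantitative you would still need an explicit perturbation estimate of this kind.
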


We will need the following auxiliary result.

\begin{lem}\label{aux}
Let $w_k=E^+ g_k$  (resp. $w_k=E^- g_k$) where
\begin{equation} \label{unifgrowth}
\sum_{i\ge 1} 2^{-i}\sup_{B^*_{2^i} }\bigl|g_k\bigr| \le C,
\end{equation}
and
\begin{equation}\label{unifC2alpha}
\|g_k\|_{C^{1,\alpha}(\overline{B_{1/2}^*})} \le C,
\end{equation}
for some $\alpha\in(0,1)$, with $C$ independent of $k$.

Suppose that, for some $g\in C(\R^{n-1}\times\{0\})$
\begin{equation}\label{decayconvergence}
\sum_{i\ge 1} 2^{-i}\sup_{B_{2^i} }\bigl|g_k-g\bigr|  \rightarrow  0 \quad \mbox{on }\{x_n=0\}.
\end{equation}
Then, $|\partial_{x_n} w_k- \partial_{x_n}w|(0) \rightarrow 0$, where $w=E^+g$ (resp. $w=E^- g$).
\end{lem}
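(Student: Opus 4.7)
The plan is to work with the difference $v_k := w_k - w$, treating the case $w_k = E^+ g_k$ (the $E^-$ case being symmetric). Since both $w_k$ and $w = E^+ g$ solve $M^+ u = 0$ in $\{x_n > 0\}$, $v_k$ satisfies the Pucci inequalities $M^+ v_k \ge 0 \ge M^- v_k$ there with boundary datum $g_k - g$ on $\{x_n = 0\}$, and it obeys the summable growth \eqref{sumablegrowth} uniformly in $k$ thanks to hypotheses \eqref{unifgrowth} and \eqref{decayconvergence}. I would then combine two ingredients: an $L^\infty$--smallness estimate $\|v_k\|_{L^\infty(B_1^+)} \to 0$, and a uniform boundary $C^{1,\alpha'}$ bound $\|v_k\|_{C^{1,\alpha'}(\overline{B_{1/4}^+})} \le C$, and conclude by interpolation.

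For the $L^\infty$--smallness, I would feed the coefficients $a_i^{(k)} := 2^{-i}\sup_{B^*_{2^{i+1}}\setminus B^*_{2^i}}|g_k - g|$ into Lemma \ref{lemsupersol}, adding the constant $\sup_{B_1^*}|g_k - g|$ as in the proof of Proposition \ref{prop-extensions}, to build a supersolution $\phi_k$ of $M^+\phi_k\le 0$ that majorizes $|g_k - g|$ on $\{x_n = 0\}$ and satisfies \eqref{sumablegrowth}. Applying the maximum principle of Lemma \ref{lem-sumablegrowth} to $\phi_k \pm v_k$ yields $|v_k| \le \phi_k$ in $\{x_n > 0\}$, and the bound \eqref{control} with $j = 0$ gives
\[
\sup_{B_1^+}|v_k| \le C\epsilon_k, \qquad \epsilon_k := \sup_{B_1^*}|g_k-g| + \sum_{i \ge 0} 2^{-i}\sup_{B^*_{2^{i+1}}\setminus B^*_{2^i}}|g_k-g|,
\]
with $\epsilon_k \to 0$ by \eqref{decayconvergence}.

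For the uniform $C^{1,\alpha'}$ bound, the same barrier construction applied to $w_k$ itself (now using \eqref{unifgrowth}) gives $\sup_{B_1^+}|w_k|\le C$ uniformly in $k$. Since $w_k$ solves the convex fully nonlinear uniformly elliptic equation $M^+ w_k = 0$ in $B_1^+$ with $C^{1,\alpha}$ data on the flat portion $B_{1/2}^*$ of the boundary (by \eqref{unifC2alpha}), standard boundary regularity for such equations (cf.\ \cite{CC}) yields, for some $\alpha' \in (0,\alpha]$,
\[
\|w_k\|_{C^{1,\alpha'}(\overline{B_{1/4}^+})} \le C\bigl(\|w_k\|_{L^\infty(B_1^+)} + \|g_k\|_{C^{1,\alpha}(\overline{B_{1/2}^*})}\bigr) \le C
\]
uniformly in $k$; the same bound holds for $w$, since $g$ inherits both \eqref{unifgrowth} and \eqref{unifC2alpha} as the uniform-on-compacts limit of $g_k$. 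Hence $\|v_k\|_{C^{1,\alpha'}(\overline{B_{1/4}^+})}\le C$, and the classical interpolation between $L^\infty$ and $C^{1,\alpha'}$ closes the argument:
\[
|\partial_{x_n} v_k(0)| \le \|\nabla v_k\|_{L^\infty(B_{1/8}^+)} \le C\,\|v_k\|_{L^\infty(B_{1/4}^+)}^{\alpha'/(1+\alpha')} \|v_k\|_{C^{1,\alpha'}(\overline{B_{1/4}^+})}^{1/(1+\alpha')} \le C\epsilon_k^{\alpha'/(1+\alpha')} \to 0.
\]
The step requiring most care is the uniform boundary $C^{1,\alpha'}$ estimate: on the fixed ball $B_1^+$ its applicability depends crucially on the barrier machinery of Lemmas \ref{lemsupersol0}--\ref{lemsupersol} converting the unbounded-but-summable growth at infinity into a uniform local $L^\infty$ bound, after which the regularity theory of \cite{CC} applies uniformly in $k$.
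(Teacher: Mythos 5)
Your proposal is correct and follows essentially the same route as the paper: a uniform $L^\infty(B_1^+)$ bound on $w_k$ via the barriers of Lemma \ref{lemsupersol}, boundary $C^{1,\alpha}$ estimates from \cite{CC} using \eqref{unifC2alpha} to get uniform $C^{1,\alpha}(\overline{B_{1/4}^+})$ bounds, and the maximum principle applied to $w_k-w$ (which satisfies the Pucci inequalities) together with \eqref{decayconvergence} to get $\sup_{B_1^+}|w_k-w|\to0$. The only cosmetic difference is that you conclude $\partial_{x_n}v_k(0)\to0$ by an explicit $L^\infty$--$C^{1,\alpha'}$ interpolation inequality, whereas the paper invokes the compactness of $C^{1,\alpha}\hookrightarrow C^1$; these are quantitative and qualitative versions of the same step.
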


\begin{proof}
We first show that $w_k\in C^{1,\alpha}\big(\overline{B_{1/4}^+}\big)$, with a bound independent of $k$, and that $w_k\to w$ uniformly in $\overline{B_{1/4}^+}$.

Indeed, it follows from \eqref{unifgrowth} and from Lemma \ref{lemsupersol} (see also the proof of Proposition \ref{prop-extensions}) that $\|w_k\|_{L^\infty(B_1^+)}\le C$, with $C$ independent of $k$. Then, by the $C^{1,\alpha}$ estimates up to the boundary (see \cite{CC}) using \eqref{unifC2alpha} we obtain that  $\|w_k\|_{C^{1,\alpha}(\overline{B_{1/4}^+})} \le C$.

On the other hand, $w_k-w$ is a viscosity solution of $M^-(w_k-w) \le 0\le M^+(w_k-w)$ in $\{x_n>0\}$. Then by \eqref{decayconvergence} ---using again Lemma \ref{lemsupersol}--- we find $\sup_{B_1^+} (w_k-w) \rightarrow 0$.

Since all the $w_k$ are uniformly $C^{1,\alpha}(\overline{B_{1/4}^+})$ and converge uniformly to $w$ in $\overline{B_1^+}$ we find in particular $w_k\rightarrow w$ in $C^1(\overline{B_{1/4}^+})$. 
Thus, $|\partial_{x_n} w_k- \partial_{x_n}w|(0) \rightarrow 0$.
\end{proof}

We now give the:

\begin{proof}[Proof of Proposition \ref{prop-1D}]
(a) It follows by the scaling properties of $M^\pm$ and by uniqueness of $E^\pm$ that $\varphi_\beta^\pm$ are homogeneous functions of degree $\beta$.
Thus, part (a) follows, with
\[\overline C(\beta)=\partial_y\varphi_\beta^+(1,0),\qquad \underline C(\beta)=\partial_y\varphi_\beta^-(1,0).\]

(b) It follows from Lemma \ref{aux} ---translating the origin to the point (1,0)--- that $\partial_y \varphi_{\beta'}^\pm (1,0 )\to \partial_y \varphi_\beta^\pm(1,0)$.
As a consequence, $\overline C(\beta)$ and $\underline C(\beta)$ are continuous in $\beta\in(0,1)$. Although for $\beta =0$, the function $(x_+)^\beta= \chi_{x>0}$ has a discontinuity, we can easily adapt the proof of Lemma \ref{aux} to this situation by using that the only discontinuity point is at $(0,0)$ and that the solution is  bounded near this discontinuity point.

Note instead that a similar continuity property is not true as $\beta\uparrow 1$, since we approach the critical growth and hence we can not guarantee that $\|\varphi_\beta^\pm\|_{L^\infty(\overline{B_{1/4}^+(1,0))}}$ stays bounded as $\beta\uparrow 1$. In fact, we will show later on in this proof that this $L^\infty$ norm diverges.

Now, when $\beta=0$, as said abobe  $\varphi_\beta^\pm(x,0)= \chi_{\{x>0\}}$  and  Hopf lemma implies that $\partial_y \varphi_{\beta}^\pm (1,0 )<0$. Thus,
\[\lim_{\beta\downarrow 0} \underline C(\beta)\leq \lim_{\beta\downarrow 0} \overline C(\beta)<0.\]

On the other hand, we claim that
\begin{equation}\label{asbetato1}
\overline C(\beta)\geq \underline C(\beta) \ge \frac{c}{1-\beta} \to\infty\quad  \mbox{as}\quad \beta \uparrow 1.
\end{equation}
Indeed, let $\psi$ be the subsolution of Lemma \ref{lemsubsol}, with $r_0=\frac14$ and extended by zero outside $B_1$.
Consider the new subsolution
\[ \psi_k (x,y) = \sum_{i=0}^k 2^{\beta i}  \psi (2^{-i}x-\frac12,2^{-i}y),\]
which satisfies $M^-\psi_k\ge 0$ in all of $\{y>0\}$.

Note that, since $r_0=1/4$, the functions  we have $\psi (2^{-i}x-1/2,2^{-i}y)$ have disjoint supports at $y=0$.
Thus, we find
\[ \psi_k (x,0)  \le  2^{i\beta} \chi_{\{0<x< 2^i\}} \quad\mbox{for all }k \mbox{ and } i\]
In particular $2^{-\beta} \psi_k \le (x_+)^\beta$ on $\{y=0\}$.
Now, for fixed $\beta$ we readily show, using Lemma \ref{lem-sumablegrowth} and Proposition \ref{prop-extensions}, that
\begin{equation}\label{bbybelow}
2^{-\beta} \psi_k \le \varphi_\beta^{-} = E^- (x_+)^\beta \quad \mbox{(for all $k$)}.
\end{equation}

But note that, by Lemma \ref{lemsubsol}, at $x=\frac14$ we have $\psi_k(\frac14,0)=0$ and thus
\[  \psi_k \left(\frac14,y\right) = \sum_{i=0}^k  2^{(\beta-1) i} (\partial_i \psi) \left(2^{-i}-\frac12,2^{-i}y\right)  \ge c\,\frac{1-2^{(\beta-1)k}}{1-2^{\beta-1} } y.\]
for $|y|<1/2$. Letting $k\to\infty$, using  \eqref{bbybelow}, and recalling that $\varphi_\beta^{-}$ is homogeneous of degree $\beta$ we obtain
\[  \varphi_\beta^{-} (x,y) \ge \frac{c}{1-2^{\beta-1}} \,y \ge \frac{c}{1-\beta} \,y  \quad \mbox{for } x\in\left(\frac12,\frac32\right),\  y\in(0,1)\]
for some $c>0$ universal.

As $\beta\uparrow1$, thus $\varphi_\beta^{-} (x,y)$ is a nonnegative solution in $Q= (1,2,3/2)\times(0,1)$ with trace $x^\beta$ on $(1,2,3/2)\times \{y=0\}$ and that is arbitrarily large
in $(1,2,3/2)\times(1/2,1)$.
Then it is immediate to show that there is a quadratic polynomial $P$ satisfying $M^-P\ge0$ (subsolution), such that $P$ touches $\varphi_\beta^{-}$  by below in $\overline Q$  at the point $x=1$, $y=0$, and with $\partial_y P (1,0)$ arbitrarily large. Thus $\underline C(\beta)$ is arbitrarily large as $\beta \to 1$ ---with a growth $c/(1-\beta)$---, finishing the proof of the claim \eqref{asbetato1}.

Finally, as said before, $\overline C$ and $\underline C$ are continuous functions. Thus,  there are $0<\beta_1\leq \beta_2<1$ such that $\overline C(\beta_1)=0$ and $\underline C(\beta_2)=0$.

The uniqueness of the exponents $\beta_1$ and $\beta_2$ follows by a simple contact argument.
Indeed, if $\beta<\beta'$  then some translation (to the right) of the function $\varphi_\beta^{+}$ touches $\varphi_{\beta'}^{+}$ by below at some point on $\{x>0,y=0\}$. But since the two functions are homogeneous the sign of their vertical derivatives is the same on all of $\{x>0,y=0\}$. This shows that ${\rm sign}\bigl( \overline C(\beta')\bigr) > {\rm sign}\bigl( \overline C(\beta)\bigr)$,
where the strict inequality is a consequence of Hopf Lemma. This implies that the zero of  $\overline C$ is unique. The same argument applies to~$\underline C$.

Finally, using the same contact argument to compare $\varphi_{\beta_1}^+$ and $\varphi_{\beta_2}^-$ with the harmonic extension of $(x_+)^{1/2}$ (i.e. the solution for the Laplacian), we obtain $\beta_1<\frac 12<\beta_2$.

(c) Let $\psi$ be the solution of
\[\psi(x,0)=(x_+)^{\frac12-\delta}\quad\textrm{on}\quad \{y=0\},\]
\[\Delta \psi=-\kappa r^{-\frac32-\delta}\quad\textrm{in}\quad \{y>0\},\]
where $r=\sqrt{x^2+y^2}$.
Notice that $\psi$ is homogeneous of degree $\frac12-\delta$ in $\R^2$.

Notice also that when $\kappa=0$ then $\psi_y(x,0)=-c(\delta)x^{-\frac12-\delta}<0$ for $x>0$.
Thus, if $\kappa$ is small, we will have $\psi_y(x,0)\leq -\frac12 c(\delta)x^{-\frac12-\delta}$ for $x>0$.
In fact, a simple computation shows that $c(\delta)\geq c\delta$ for $\delta$ small.
Thus, by linearity, we may take $\kappa\geq c\delta>0$, too.

Let us now check that, if $|\Lambda-1|+|\lambda-1|\leq \gamma$, with $\gamma>0$ small, then
\[M^+\psi\leq0\quad \textrm{in}\quad \{y>0\}.\]
For this, notice that by homogeneity of $\psi$ we only need to check it on $\partial B_1$, where $\psi$ is $C^2$.
Also, notice that
\[M^+\psi=\lambda\Delta\psi+(\Lambda-\lambda)(\textrm{sum of positive eigenvalues of }D^2\psi),\]
so that
\[M^+\psi\leq \lambda\Delta\psi+C(\Lambda-\lambda)\leq -\lambda \kappa+C\gamma\leq -c\delta+C\gamma\leq0\]
provided that $\gamma\leq \delta/C$.

Thus,
\[M^+\psi\leq0\quad\textrm{on}\quad \{y>0\},\]
\[\psi_y\leq0 \quad\textrm{on}\quad \{y=0,\,x>0\},\]
\[\psi\textrm{ is homogeneous of degree }\frac12-\delta.\]
This, and the same contact argument as before, yields $\frac12-\delta<\beta_1$.
Repeating the same argument with $\frac12+\delta$, we get $\frac12+\delta>\beta_2$, and thus the proposition is proved.
\end{proof}

As a consequence, we have the following.

\begin{cor}\label{cor-1D}
Given $e\in S^{n-2}$, let
\[w_0^+(x):=\varphi_{\beta_1}^+(x'\cdot e,|x_n|),\qquad w_0^-(x):=\varphi_{\beta_2}^-(x'\cdot e,|x_n|),\]
where $\varphi_\beta^\pm$ and $\beta_1,\beta_2$ are given by Proposition \ref{prop-1D}.
Then,
\[\begin{cases}
M^\pm w_0^\pm=0\quad &\textrm{in}\ \R^n\setminus\bigl(\{x'\cdot e\leq 0\}\cap\{x_n=0\}\bigr)\\
w_0^\pm=0\quad &\textrm{on}\ \{x'\cdot e\leq 0\}\cap\{x_n=0\}.\end{cases}\]
The functions $w_0^+$ and $w_0^-$ are homogeneous of degree $\beta_1$ and $\beta_2$, respectively, and $0<\beta_1<\frac12<\beta_2<1$.

Moreover, $\frac12-\delta<\beta_1<\frac12<\beta_2<\frac12+\delta$ whenever $|\Lambda-1|+|\lambda-1|\leq \delta/C$.
\end{cor}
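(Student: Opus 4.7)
The plan is to define $w_0^\pm$ as stated and verify the four claims of the corollary, building everything out of Proposition \ref{prop-1D}. Write $\R^{n-1} = \R e \oplus e^\perp$, so that $w_0^\pm(x)$ depends only on the two variables $s := x'\cdot e$ and $t := |x_n|$. The homogeneity and the bounds $0<\beta_1<\frac12<\beta_2<1$ (and the quantitative version in (c)) are immediate from parts (a)--(c) of Proposition \ref{prop-1D}, since $x\mapsto(x'\cdot e,|x_n|)$ is positively $1$-homogeneous.

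The equation on $\R^n\setminus\bigl(\{x'\cdot e\le0\}\cap\{x_n=0\}\bigr)$ I would handle in two steps. First, in the open half-spaces $\{x_n>0\}$ and $\{x_n<0\}$, $w_0^\pm$ agrees with $\varphi_{\beta_{1,2}}^\pm(s,t)$ (with $t=\pm x_n$), and since $w_0^\pm$ is constant in the $(n-2)$ directions of $e^\perp$, the Hessian $D^2 w_0^\pm$ has $n-2$ zero eigenvalues while its remaining two eigenvalues coincide with those of the $2\mathrm{D}$ Hessian of $\varphi_{\beta_{1,2}}^\pm$. Since zero eigenvalues do not contribute to $M^\pm$, we obtain $M^\pm w_0^\pm=0$ on each open half-space in the viscosity sense, inherited from Proposition \ref{prop-extensions}.

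The second step, which is the genuine content of the corollary and the main obstacle, is to verify the equation across the ``open part'' of the hyperplane $\{x_n=0,\,x'\cdot e>0\}$. Here we use crucially the choice of exponents: by Proposition \ref{prop-1D}(a)--(b),
\[\partial_t\varphi_{\beta_1}^+(s,0)=\overline C(\beta_1)\,s^{\beta_1-1}=0\quad\text{and}\quad \partial_t\varphi_{\beta_2}^-(s,0)=\underline C(\beta_2)\,s^{\beta_2-1}=0\quad\text{for }s>0,\]
so the even reflection in $x_n$ is $C^1$ (in fact $C^{1,\alpha}$) across $\{x_n=0,\,x'\cdot e>0\}$ by interior/boundary regularity of $\varphi_{\beta_{1,2}}^\pm$. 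A standard viscosity-reflection argument then applies: any paraboloid touching $w_0^\pm$ from above or below at such a point can be folded back to one side, where the equation for $\varphi_{\beta_{1,2}}^\pm$ holds; the even symmetry of the Pucci operators under $x_n\mapsto-x_n$ (which preserves the spectrum of the Hessian) transfers the inequality to $M^\pm w_0^\pm$. Thus $M^\pm w_0^\pm=0$ in the viscosity sense on all of $\R^n\setminus(\{x'\cdot e\le0\}\cap\{x_n=0\})$.

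Finally, on the slit $\{x'\cdot e\le0\}\cap\{x_n=0\}$ the boundary data of $\varphi_\beta^\pm$ is $(s_+)^\beta\equiv 0$, giving $w_0^\pm=0$ there and completing the proof. The only delicate point to spell out carefully is the viscosity-sense reflection in the second step; everything else is a direct translation of Proposition \ref{prop-1D}.
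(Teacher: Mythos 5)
Your proposal is correct and follows essentially the same route as the paper: the equation away from $\{x_n=0\}$ and the homogeneity/exponent bounds are read off from Proposition \ref{prop-1D}, and the equation across $\{x'\cdot e>0\}\cap\{x_n=0\}$ holds because the choice $\overline C(\beta_1)=0$, $\underline C(\beta_2)=0$ makes the even reflection $C^1$ there. The paper simply invokes this $C^1$-matching fact without spelling out the viscosity test-function (reflection) argument that you sketch.
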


\begin{proof}
The result follows from Proposition \ref{prop-1D}, and taking into account that since $M^\pm w_0^\pm=0$ in $\{x_n\neq0\}$ and $w_0^\pm$ are $C^1$ at points on $\{x'\cdot e>0\}\cap\{x_n=0\}$, then they also solve the equation therein.
\end{proof}

\subsection{A maximum principle type Lemma}

We finally prove the following Lemma, similar to \cite[Lemma 5]{ACS}.

\begin{lem}\label{lem-max}
Let $c_0,c_1$ be given positive constants with $c_1 <  \sqrt{\lambda/(9n\Lambda)}$ ---i.e. universally small enough.
Then, there exists $\sigma>0$ for which the following holds.

Assume $v\in C(\overline{B_1})$ satisfies
\begin{itemize}
 \item $M^-v\leq \sigma$ in $B_1\setminus\Omega^*$, with $\Omega^*\subset\{x_n=0\}$
 \item $v=0$ on $\Omega^*$
 \item $v\geq c_0>0$ for $|x_n|\geq c_1>0$
 \item $v\geq -\sigma$ in $B_1$
\end{itemize}
Then, $v\geq0$ in $B_{1/2}$.
Moreover, $v\geq c_2 |x_n|$ in $B_{1/2}$, for some $c_2>0$ (small).
\end{lem}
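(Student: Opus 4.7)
The strategy is to construct a classical strict polynomial subsolution of $M^-u=\sigma$ that lies below $v$ on the relative boundary of $B_1\setminus\Omega^*$, apply the Pucci comparison principle to obtain a quadratic lower bound on $v$, then bootstrap to $v\geq0$ in $B_{1/2}$ and finally to $v\geq c_2|x_n|$ by a Hopf-type argument in each half-ball.

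The central barrier is
$$\phi(x)=A\bigl(x_n^2-\kappa|x'|^2\bigr),$$
with $\kappa=\lambda/[2(n-1)\Lambda]$ chosen universally so that $M^-\phi=2A[\lambda-\kappa(n-1)\Lambda]=A\lambda$. I then verify $\phi\leq v$ on each piece of $\partial(B_1\setminus\Omega^*)$. On the thin set $\Omega^*\subset\{x_n=0\}$ we have $\phi(x',0)=-A\kappa|x'|^2\leq 0=v$. On $\partial B_1\cap\{|x_n|\geq c_1\}$ the hypothesis gives $v\geq c_0$ while $\phi\leq A$, so $A\leq c_0$ suffices. On $\partial B_1\cap\{|x_n|<c_1\}$, the identity $|x'|^2=1-x_n^2\geq 1-c_1^2$ yields $\phi\leq A\bigl[c_1^2(1+\kappa)-\kappa\bigr]$, and the precise hypothesis $c_1^2<\lambda/(9n\Lambda)$ together with the chosen $\kappa$ makes this bracket $\leq-\delta$ for a universal $\delta>0$; hence $\phi\leq-A\delta\leq-\sigma\leq v$ once $\sigma$ is a sufficiently small universal multiple of $c_0$ (also ensuring $\sigma\leq A\lambda$, so $\phi$ is a strict subsolution). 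The viscosity comparison principle for Pucci extremal operators then gives $v\geq\phi$ throughout $B_1$.

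To pass from this quadratic lower bound to $v\geq 0$ in $B_{1/2}$, I would iterate the construction on a sequence of slightly shrunk balls: the bound $v\geq\phi$ supplies, on $\partial B_{3/4}\cap\{|x_n|<c_1\}$, the improved inequality $v\geq-A\kappa(3/4)^2$, which strictly betters the crude $v\geq-\sigma$ used initially. Re-running the barrier on $B_{3/4}$ with this improved boundary data produces a new lower bound with error smaller by a universal factor strictly less than one (again thanks to $c_1^2<\lambda/(9n\Lambda)$), and the iteration converges to $v\geq 0$ in $\overline{B_{3/4}}$. Once $v\geq 0$ on $\overline{B_{3/4}}$, I would apply the weak Harnack inequality for Pucci supersolutions in each half-ball, which together with the hypothesis $v\geq c_0$ on $\{|x_n|\geq c_1\}$ produces a uniform positive lower bound for $v$ at $\{|x_n|=c_1/2\}\cap B_{1/2}^\pm$. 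Finally, comparing $v$ with the linear subsolution $\psi(x)=\varepsilon\,x_n$ (which satisfies $M^-\psi=0$) in the slab $B_{1/2}^+\cap\{x_n<c_1/2\}$, and symmetrically in $B_{1/2}^-$, delivers the linear detachment $v\geq c_2|x_n|$.

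The main obstacle I expect is the bootstrap step: one must justify that the quadratic lower bound improves geometrically to $v\geq 0$ while $\Omega^*$ remains an \emph{arbitrary} closed subset of $\{x_n=0\}$, so interior-ball conditions are unavailable. The strict smallness assumption $c_1<\sqrt{\lambda/(9n\Lambda)}$ is precisely the quantitative input that forces the contraction factor to be less than one at each iteration, and it is this scale-sensitive hypothesis that carries the whole argument.
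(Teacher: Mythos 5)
Your first comparison step (the origin-centred barrier $\phi=A(x_n^2-\kappa|x'|^2)$, giving $v\ge\phi$ in $B_1$) is sound, but the bootstrap you build on it is where the lemma actually lives, and it does not work. Quantitatively, the constraints $A\delta\ge\sigma$, $A\lambda\ge\sigma$ force $A\ge c\,\sigma$, and the best constant bound the centred barrier can give on a smaller concentric ball $B_{r'}$ is $v\ge-A\kappa (r')^2$; even in the most favourable case ($A$ minimal, $c_1$ tiny) the gain per step is essentially the ratio of radii squared, so iterating over concentric balls whose radii must stay above $1/2$ the total gain telescopes to a factor bounded below by $(1/2)^2$: the lower bound stays of order $-\sigma$ and never reaches $0$ (and for the admissible range of $c_1$ and small $n$ the ``improvement'' $-A\kappa(3/4)^2$ is in fact worse than $-\sigma$). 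The deeper, geometric obstruction is that the negativity of your barrier at a point $(x',x_n)$ with $|x'|$ of order one and $|x_n|$ small is proportional to $|x'|^2$, which does not shrink when you shrink concentric balls; no iteration of origin-centred barriers can certify $v\ge0$ at such off-axis points. The missing idea is to recentre the barrier at the would-be negative point, which is exactly the paper's (non-iterative) proof: if $v(z)<0$ for some $z\in B_{1/2}$ (necessarily $|z_n|<c_1$), work in the cylinder $Q=\{|x'-z'|\le\frac13,\ |x_n|\le c_1\}$ and set $w=v+\delta P$ with $P(x)=|x'-z'|^2-\frac{n\Lambda}{\lambda}x_n^2$ and $C\sigma<\delta<c_0/C$. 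Then $M^-w\le M^-v+\delta M^+P\le\sigma-\delta\Lambda\le0$ off $\Omega^*$, $w\ge0$ on $\Omega^*$, $w\ge0$ on the top and bottom of $\partial Q$ by $v\ge c_0$, and $w\ge-\sigma+\delta\bigl(\frac19-\frac{n\Lambda}{\lambda}c_1^2\bigr)\ge0$ on the lateral part ---this is precisely where $c_1<\sqrt{\lambda/(9n\Lambda)}$ enters, tied to the lateral radius $\frac13$, not to any contraction factor--- while $w(z)<0$, contradicting the minimum principle.

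Your final step also has a gap: the linear function $\varepsilon x_n$ satisfies $M^-(\varepsilon x_n)=0$, so it cannot absorb the right-hand side $\sigma$ in the comparison, and on the lateral boundary of your slab $B_{1/2}^+\cap\{x_n<c_1/2\}$ you only know $v\ge0$ while $\varepsilon x_n>0$ there, so the boundary comparison fails (points of $\Omega^*$ where $v=0$ can lie arbitrarily close to that lateral boundary). The paper instead invokes the strict Hopf subsolution \eqref{Hopf}, whose strict inequality $M^-\eta\ge c>0$ absorbs $\sigma$ and whose positivity set is a small ball, so no lateral-boundary issue arises; your weak-Harnack remark is compatible with this, but the linear slab barrier as written should be replaced by that construction.
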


\begin{proof}
Let us prove that $v\geq0$ in $B_{1/2}$.
Once this is proved, then $v\geq c_2|x_n|$ follows from the standard subsolution of Hopf's lemma ---see \eqref{Hopf}--- provided that $\sigma$ is small enough.

Assume there is $z=(z',z_n)\in B_{1/2}\cap\{|x_n|< c_1\}$ such that $v(z)<0$.
Let
\[Q=\left\{(x',x_n)\,:\,|x'-z'|\leq \frac13,\ |x_n|\leq c_1\right\}\]
and
\[P(x)=|x'-z'|^2-\frac{n\Lambda}{\lambda} x_n^2.\]
Notice that $M^+P=-\Lambda$.

Define
\[w=v+\delta P,\]
where $\delta>0$ is such that $0<C\sigma<\delta<c_0/C$, with $C$ large enough.
Then, we have
\begin{itemize}
\item $w(z)=v(z)-\delta\Lambda z_n^2<0$
\item $M^-w\leq M^-v+\delta\, M^+P\leq \sigma-\delta\Lambda\leq0$ outside $\Omega^*$
\item $w\geq0$ on $\Omega^*$
\end{itemize}
Thus, $w$ must have a negative minimum on $\partial Q$.

On $\partial Q\cap \{|x_n|=c_1\}$ we have
\[w\geq c_0-\delta \frac{n\Lambda}{\lambda}c_1^2\geq0.\]
On $\partial Q\cap \{|x'-z'|=1/3\}\cap \{0\leq |x_n|\leq c_1\}$, we have $v\geq -\sigma$, so that
\[w\geq -\sigma+ \delta\left(\frac19-\frac{n\Lambda}{\lambda} c_1^2\right)\geq0.\]
Hence, $w\geq0$ on $\partial Q$ and we have reached a contradiction.
Therefore, $v\geq0$ in $B_{1/2}$, as desired.
\end{proof}

\section{A boundary Harnack inequality}
\label{sec3}

We prove here a boundary Harnack inequality in ``slit'' cones, for solutions that are monotone in some ``outwards''  directions.
More precisely, we establish the following.

\begin{prop}\label{bdry-H}
Let $\Sigma^*\subset \R^{n-1}\times \{0\}$ be some nonempty closed convex cone satisfying
\begin{equation}\label{inclusion}
\Sigma^*\subset  \biggl\{ \frac{x}{|x|}\cdot e\le -\varepsilon \biggr\}
\end{equation}
for some $e\in S^{n-2}$ and $\varepsilon \in (0,1/8)$.
Let $\theta_1,\theta_2$ be unit vectors in $\R^{n-1}\times \{0\}$ with $-\theta_i\in \Sigma^*$.

Assume that $u_1,u_2\in C(B_1)$ satisfy
\begin{equation}\label{eqbddmq}
M^+(au_1+bu_2)\geq0\quad\textrm{in}\ B_1\setminus\Sigma^*
\end{equation}
for all $a,b\in\R$,
\[u_1= u_2=0\quad\mbox{on }B_1^*\cap  \Sigma^*.\]
Assume also $u_i\geq0$ in $B_1^+$, \,$\sup_{B_{\varepsilon/2}} u_1= \sup_{B_{\varepsilon/2}} u_2$, and $u_i$ is monotone nondecreasing in the direction $\theta_i$ in all of $B_1$ ---that is, $u_i(\bar x)\ge u_i(x)$ whenever $\bar x -x = t\theta_i$ for some $t\ge0$ and $x,\bar x \in B_1$.

Then,
\[ \frac{1}{C\varepsilon^{-M} }\,u_2\leq u_1\leq C\varepsilon^{-M}  \,u_2\qquad\textrm{in}\ \overline{B_{\varepsilon/4}},\]
where  $C$ and $M$ are positive universal constants.
\end{prop}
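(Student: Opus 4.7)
My plan hinges on the observation that the hypothesis \eqref{eqbddmq}, applied with $(a,b)=(\pm 1,0)$ and $(0,\pm 1)$, gives $M^+ u_i\ge 0$ and $M^- u_i\le 0$ for $i=1,2$, so each $u_i$ satisfies the Krylov--Safonov interior Harnack inequality on every ball disjoint from $\Sigma^*$; the same holds for every linear combination $au_1+bu_2$, which is what enables the comparison between $u_1$ and $u_2$.

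After normalising $\sup_{B_{\varepsilon/2}}u_1=\sup_{B_{\varepsilon/2}}u_2=1$, the upper half of the conclusion is straightforward: since $-\theta_i\in\Sigma^*\subset\{\zeta\cdot e\le-\varepsilon|\zeta|\}$ forces $\theta_i\cdot e\ge\varepsilon$, for any $x\in B_{\varepsilon/4}$ the translate $x+(\varepsilon/4)\theta_i$ still lies in $B_{\varepsilon/2}$, so monotonicity gives $u_i(x)\le u_i(x+(\varepsilon/4)\theta_i)\le 1$. For the lower part I would locate a reference point $p\in B_{1/2}^+$ with $\mathrm{dist}(p,\Sigma^*)\ge c\varepsilon$ at which $u_i(p)\ge c_0(\varepsilon)>0$: taking $x_i^*$ with $u_i(x_i^*)=1$, monotonicity propagates $u_i\ge 1$ along the segment $\{x_i^*+s\theta_i : 0\le s\le 1/2\}$, which crosses the ``safe'' region $\{\zeta\cdot e>0\}$ because $\theta_i\cdot e\ge\varepsilon$, and an interior Harnack chain of length $\sim\log(1/\varepsilon)$, built out of balls of radius $\sim\varepsilon$ inside $B_{1/2}\setminus\Sigma^*$, then delivers the lower bound $u_i(p)\ge c\,\varepsilon^M$ with $M$ universal (this is exactly where the power $\varepsilon^{-M}$ in the final statement originates).

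The comparison is then closed by applying the maximum principle of Lemma \ref{lem-sumablegrowth} to $v_\pm:=u_1\pm C\varepsilon^{-M}u_2$ in $B_{1/2}\setminus\Sigma^*$, using $v_\pm=0$ on $\Sigma^*$ and controlling $v_\pm$ on the rest of $\partial B_{1/2}$ by comparing with the supersolution $\phi$ of Lemma \ref{lemsupersol} built from the boundary data $u_1+u_2$. The subsolution $\psi$ of Lemma \ref{lemsubsol}, rescaled on small thin balls attached to $\Sigma^*$, supplies the matching sharp lower bound of $u_i$ near $\Sigma^*$ in terms of $u_j$, so that the sign of $v_\pm$ is determined on $B_{\varepsilon/4}$. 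The main obstacle is that the standard boundary Harnack principle for non-divergence operators is known to fail on domains as rough as the slit $B_1\setminus\Sigma^*$ (which is not even Lipschitz in the interior of $\Sigma^*$); the monotonicity hypothesis in the direction $\theta_i$ is precisely what substitutes for boundary regularity here, and it is also what keeps the $\varepsilon$-degeneration polynomial --- via the $\log(1/\varepsilon)$ bound on the length of the Harnack chain --- rather than exponential.
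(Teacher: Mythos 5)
Your first half is essentially the paper's Step 1: from \eqref{eqbddmq} you get $M^-u_i\le 0\le M^+u_i$ off $\Sigma^*$, and the combination of the monotonicity in $\theta_i$ (using $\theta_i\cdot e\ge\varepsilon$) with a chain of interior Harnack inequalities in the region $\{x\cdot e\gtrsim\varepsilon\}$, which stays at distance $\gtrsim\varepsilon$ from the cone, yields $u_i\le 1$ near the origin and $u_i\ge c\,\varepsilon^{M}$ at points away from $\Sigma^*$. (Minor point: to keep the constant polynomial in $\varepsilon$ the chain should use balls whose radius is comparable to the distance to the cone, so that its length is $\sim\log(1/\varepsilon)$; a chain of balls of fixed radius $\sim\varepsilon$ of that length does not reach across a unit distance.)

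The genuine gap is in the closing comparison. The conclusion $u_1\le C\varepsilon^{-M}u_2$ in $\overline{B_{\varepsilon/4}}$ is a statement at points arbitrarily close to $\Sigma^*$, where both functions vanish, so it cannot follow from $u_1\le 1$ plus a lower bound on $u_2$ away from the slit; one must run a comparison argument in a region whose boundary meets $\{x_n=0\}$ near $\Sigma^*$, and there the sign of $u_1-C\varepsilon^{-M}u_2$ on the lateral boundary is exactly what is unknown. Your proposal does not resolve this: Lemma \ref{lem-sumablegrowth} is a global half-space maximum principle requiring a sign on all of $\{x_n=0\}$ and a growth condition at infinity, so it does not apply in $B_{1/2}\setminus\Sigma^*$ where $u_1-Cu_2$ has no sign on $\{x_n=0\}\setminus\Sigma^*$; a supersolution built from the data $u_1+u_2$ controls magnitudes but not the relative size of $u_1$ versus $u_2$ on $\partial B_{1/2}$; and invoking Lemma \ref{lemsubsol} to produce ``the sharp lower bound of $u_i$ near $\Sigma^*$ in terms of $u_j$'' is circular --- that bound is the boundary Harnack inequality being proved. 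The paper's mechanism, which is the missing idea, is the barrier inequality $C\varepsilon^{-M}\bar u_1+\eta\ \ge\ \bar u_2+C_1\phi$ for the rescaled functions $\bar u_i(x)=u_i(\tfrac{\varepsilon}{2}x)$: the cutoff $\eta$ ($\eta=1$ near $\partial B_1$, $\eta=0$ in $B_{1/2}$) absorbs the uncontrolled boundary values of $\bar u_2$ on $\partial B_1$, the subsolution $\phi$ of Lemma \ref{lemsubsol} (with $M^-\phi\ge\chi_{B_{1-\rho}}$, supported on the thin ball $B^*$ on $\{x_n=0\}$) compensates the interior cost $M^+\eta$, and on $B^*$ the term $C\varepsilon^{-M}\bar u_1$ dominates both $\bar u_2$ and $C_1\phi$ thanks to the Step-1 lower bound; the maximum principle in $B_1\setminus(B^*\cup\Sigma^*)$ then gives the inequality, and since $\eta=0$, $\phi\ge0$ in $B_{1/2}$ one concludes $C\varepsilon^{-M}\bar u_1\ge\bar u_2$ there, i.e.\ the desired bound in $B_{\varepsilon/4}$ after scaling back (and symmetrically in $u_1,u_2$). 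Without this (or an equivalent device) the argument does not close.
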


\begin{proof}
We may and do assume that
\begin{equation}\label{sup}
\sup_{B_{\varepsilon/2}} u_i=1.
\end{equation}

\vspace{3mm}

{\em Step 1.} We define
\[
A_\varepsilon :=  B_{7/8}\cap \{  x\cdot e  \ge \varepsilon/4\}.
\]
We first prove that  that
\begin{equation}\label{comp}
0<  C_\varepsilon^{-1} \le  \inf_{A_\varepsilon} u_i \le 1
\end{equation}
where $C_\varepsilon := C\varepsilon^{-M}$ for some positive universal constants $C$ and $M$. Thoughout the proof $C_\varepsilon$ denotes a constant of this form though $C$ and $M$ may vary from line to line.

Indeed, first note that by taking the four choices $a=\pm 1, b=0$ and $a=0, b = \pm 1$ in \eqref{eqbddmq} we obtain that $u_i$ are viscosity solutions of
\[ M^- u_i \le 0 \le M^+ u_i \quad \in B_1\setminus\Sigma^*.\]
Thus, using a standard chain of interior Harnack inequalities we have
\[ \sup_{A_\varepsilon} u_i \le C_\varepsilon \inf_{A_\varepsilon} u_i .\]

On the other hand, let us show that
\[ \mbox{given }x\in B_{\varepsilon/2} \mbox{ exist }\bar x \in A_\varepsilon, \ t\ge 0 \mbox{ such that } \bar x -x = t\theta_i\]

Indeed, if $x\in B_{\varepsilon/2}$ we have $x\cdot e> -\varepsilon/2$ and thus, using \eqref{inclusion} the point $\bar x =x + \frac 3 4\,\theta_i$ satisfies
\[ \bar x\cdot e \ge -\varepsilon/2 + 3\varepsilon /4 \ge \varepsilon/4.\]
Here we have used that $\theta_i\cdot e\ge \varepsilon$ since $-\theta_i$ are unit vectors in $\Sigma^*$ and we have \eqref{inclusion}.
In addition, $\bar x\in B_{7/8}$ since $\left| \frac 3 4\,\theta_i\right| =3/4$ and $|x|= \varepsilon/2\le 1/8$.

Thus, using the monotonicity of $u_i$ in the direction $\theta_i$ we have that
\[ 1=  \sup_{B_{\varepsilon/2}} u_i \le \sup_{A_\varepsilon} u_i  \le C_\varepsilon \inf_{A_\varepsilon} u_i \le C_\varepsilon \sup_{B_{\varepsilon/2}} u_i = C_\varepsilon,\]
where for the last inequality we have used that $A_\varepsilon \cap B_{\varepsilon/2}\neq \emptyset$.

Thus, \eqref{comp} follows.

\vspace{3mm}

{\em Step 2.} We next prove that, with $C_\varepsilon$ as above,
\begin{equation}\label{comparabilB1/4}
u_1\ge  C_\varepsilon^{-1} u_2 \quad \mbox{in } B_{\varepsilon/4}^*.
\end{equation}

We consider the rescaled solutions $\bar u_i(x) = u_i\left(\frac\varepsilon 2 \,x\right)$.
Then, $\bar u_1,\bar u_2\in C(B_1)$ satisfy
\begin{equation}\label{eqbddmqRES}
M^+(a \bar u_1+b \bar u_2)\geq0\quad\textrm{in}\ B_2\setminus\Sigma^*
\end{equation}
for all $a,b\in\R$, and
\[\bar u_1= \bar u_2=0\quad\mbox{on }B_2^*\cap  \Sigma^*.\]

In addition we have $\bar u_i\geq0$ in $B_2$.
$\sup_{B_{1}} \bar u_i=1$  ---recall \eqref{sup}---, and, by Step 1,
\[ C_\varepsilon^{-1} \le  \inf_{B_1\cap \{e\cdot x \ge 1/4\} } \bar u_i . \]

Using again a chain of interior  Harnack inequalities we obtain
\begin{equation}\label{bybelowB*}
 C_\varepsilon^{-1} \le  \inf_{B^*} \bar u_i ,
 \end{equation}
where $B^* = B^*_{1/4}(z)$ for $z=e/2$.

Fix $\rho=1/10$. Let $\eta\in C^2(\overline {B_{1}})$ be some smooth ``cutoff'' function with $\eta=1$ for $|x|\ge 1-\rho$ and $\eta=0$ in $B_{1/2}$. Let us call
\[C_1:=  \sup_{B_{1}} M^+ \eta = \sup_{B_{1-\rho}} M^+ \eta  \]

Let $\phi$ be the subsolution of Lemma \ref{lemsubsol} ---with $\rho = 1/10$ and $B^* = B^*_{1/4}(z)$ for $z=e/2$, as before.

We will show next that, for $C_\varepsilon\ge 1$ large enough,
\begin{equation}\label{done}
 C_\varepsilon \bar u_1 + \eta \ge \bar u_2 + C_1 \phi \quad \mbox{in }B_1.
\end{equation}

Indeed, on the one hand since $0\le \bar  u_i \le 1$ in $B_1$ we and $\eta= 1$ for for $|x|\ge 1-\rho$  we have and $\phi=0$ on $\partial B_1$ we have that \eqref{done} holds on $\partial B_1$. On the other hand we have
\[
M^- ( C\bar  u_1 + \eta - \bar u_2 - C_1 \phi ) \le M^+\eta -C_1 M^- \phi \le C_1\chi _{B_{1-\rho}}   - C_1\chi _{B_{1-\rho}}  \le 0 \quad \mbox{in }B_1\setminus B^*
 \]
while, using \eqref{bybelowB*}
\[
C_\varepsilon \bar u_1 + \eta - \bar u_2 - C_1 \phi \ge (C_\varepsilon \bar u_1  - \bar u_2) + (C_\varepsilon \bar u_1 -  C_1\phi ) \ge 0 \quad \mbox{ in } B^*
\]
where we recall that $C$ is a constant of the type $C\varepsilon^{-M}$ with $C$ and $M$ universal and varying from line to line.

Thus, \eqref{done} follows using by the maximum principle. Finally, since $\phi\ge 0$ and $\eta =0$ in $B_{1/2}$ from \eqref{done} we deduce that
\[ C_\varepsilon \bar u_1 \ge  \bar u_2 \quad \mbox{in }B_{1/2}\]
and thus after rescaling we obtain \eqref{comparabilB1/4}.

Finally, since the roles of $\bar u_1$ and $\bar u_2$ are interchangeable we obtain the comparability of $\bar u_1$ and $\bar u_2$ in $\overline {B_{1/8}^+} $.
Rescaling back, we obtain that $u_1$ and $u_2$ are comparable in $B_{\varepsilon/4}$, as desired.
\end{proof}

As a consequence we obtain the following.

\begin{cor}\label{bdry-H-allspace}
Let $\Sigma^*\subset \R^{n-1}\times \{0\}$ be some nonempty closed convex cone satisfying
\begin{equation}\label{inclusion}
\Sigma^*\subset  \biggl\{ \frac{x}{|x|}\cdot e\le -\varepsilon \biggr\}
\end{equation}
for some $e\in S^{n-2} $ and $\varepsilon \in (0,1/8)$.
Let $\theta_1,\theta_2$ be unit vectors in $\R^{n-1}\times \{0\}$ with $-\theta_i\in \Sigma^*$.

Assume that $u_1,u_2\in C(\R^n)$ satisfy
\begin{equation}\label{eqbddmq}
M^+(au_1+bu_2)\geq0\quad\textrm{in}\ \R^n\setminus\Sigma^*
\end{equation}
for all $a,b\in\R$,
\[u_1= u_2=0\quad\mbox{on } \Sigma^*.\]
Assume also $u_i\geq0$ in $\R^n$, \,$\sup_{B_{1}} u_1= \sup_{B_{1}} u_2$, and $u_i$ is monotone nondecreasing in the direction $\theta_i$ in all of $\R^n$ ---that is, $u_i(\bar x)\ge u_i(x)$ whenever $\bar x -x = t\theta_i$ for some $t\ge0$ .

Then,
\[ \frac{1}{C\varepsilon^{-M} }\,u_2\leq u_1\leq C\varepsilon^{-M}  \,u_2\qquad\textrm{in all of }\ \R^n.\]
where $C$ and $M$ are positive universal constants.
\end{cor}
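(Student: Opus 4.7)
The plan is to deduce the corollary from Proposition~\ref{bdry-H} by a scaling argument. All hypotheses on $(u_1,u_2)$ --- the joint ellipticity \eqref{eqbddmq}, the vanishing on $\Sigma^*$, nonnegativity, and monotonicity along $\theta_i$ --- together with the cone property of $\Sigma^*$, are preserved under the dilation $v_i(x):=u_i(\rho x)$ for any $\rho>0$, since $M^\pm$ is positively homogeneous of degree two and $\rho^{-1}\Sigma^*=\Sigma^*$. The only non-scale-invariant condition in the proposition is the normalization $\sup_{B_{\varepsilon/2}}u_1=\sup_{B_{\varepsilon/2}}u_2$, which we repair by multiplying $u_2$ by the positive factor
\[
\alpha(\rho):=\frac{\sup_{B_{\varepsilon\rho/2}}u_1}{\sup_{B_{\varepsilon\rho/2}}u_2},
\]
a rescaling that again preserves the joint ellipticity by positive homogeneity. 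Applying Proposition~\ref{bdry-H} to the rescalings of $u_1$ and $\alpha(\rho)u_2$, and unfolding the scaling, will give for every $\rho>0$
\begin{equation}\label{plan-scaled}
C^{-1}\alpha(\rho)\,u_2\;\le\;u_1\;\le\;C\alpha(\rho)\,u_2\qquad\text{in }B_{\varepsilon\rho/4},
\end{equation}
with $C=C\varepsilon^{-M}$ universal.

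At the distinguished scale $\rho_0:=2/\varepsilon$, the hypothesis $\sup_{B_1}u_1=\sup_{B_1}u_2$ gives $\alpha(\rho_0)=1$, and \eqref{plan-scaled} reduces to the comparability $C^{-1}u_2\le u_1\le Cu_2$ on $B_{1/2}$. For arbitrary $\rho\ge\rho_0$, the larger ball $B_{\varepsilon\rho/4}$ still contains $B_{1/2}$. In the nontrivial case $u_2\not\equiv0$ (otherwise the normalization forces $u_1\equiv0$ as well and the conclusion is vacuous), the strong minimum principle applied to the supersolution $u_2$ on the connected open set $\R^n\setminus\Sigma^*$ yields $u_2>0$ there, so we may fix $x_0\in B_{1/2}\setminus\Sigma^*$ with $u_2(x_0)>0$. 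Evaluating \eqref{plan-scaled} at the two scales $\rho_0$ and $\rho$ at the common point $x_0$ pins down $C^{-2}\le\alpha(\rho)\le C^{2}$, uniformly in $\rho\ge\rho_0$.

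Plugging this uniform bound on $\alpha(\rho)$ back into \eqref{plan-scaled} yields the comparability of $u_1$ and $u_2$ on $B_{\varepsilon\rho/4}$ with universal constants for every $\rho\ge\rho_0$; sending $\rho\to\infty$ delivers the conclusion on all of $\R^n\setminus\Sigma^*$, while on $\Sigma^*$ both sides vanish so the inequality is trivial. The main obstacle in this plan is controlling the a priori unknown scale-dependence of $\alpha(\rho)$; the key observation is that the comparability regions at two different scales always overlap on a fixed nondegenerate set where $u_2$ is strictly positive by the strong minimum principle, so the normalization at $\rho_0$ propagates to every larger scale without any $\rho$-dependent loss in the constants.
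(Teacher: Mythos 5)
Your proposal is correct and follows essentially the same route as the paper: rescale to an arbitrary large scale, apply Proposition \ref{bdry-H}, and then use the fixed normalization $\sup_{B_1}u_1=\sup_{B_1}u_2$ together with the overlap of the comparability regions to show the scale-dependent normalizing factor is universally bounded, before letting the scale tend to infinity. The only (harmless) difference is bookkeeping: the paper pins down the two normalizing constants by comparing $\|\bar u_i\|_{L^\infty(B_{1/(2R)})}$ inside the region where Proposition \ref{bdry-H} applies, whereas you pin down $\alpha(\rho)$ by evaluating at a single point of $B_{1/2}\setminus\Sigma^*$ where $u_2>0$ via the strong maximum principle.
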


\begin{proof}
We may assume that $\sup_{B_{1/2}} u_1= \sup_{B_{1/2}} u_2=1$.

Let $R\ge 4$ arbitrary.
Consider the two rescaled functions $\bar u_1$ and $\bar u_2$ defined by
\[ \bar u_i (x) = \frac{u_i(Rx)}{C_i}\quad \mbox{for }C_i = \|u_i\|_{L^\infty(B_R)}.\]
By Proposition \ref{bdry-H} we obtain that
\[ C_\varepsilon^{-1} \bar u_2\leq \bar u_1\leq C_\varepsilon\, \bar u_2\qquad\textrm{in}\ \overline{B_{1/8}},\]
where $C_\varepsilon = C\varepsilon^{-M}$ with $C$ and $M$ universal constants.

Thus, using that
\[  1=  \|u_i\|_{L^\infty(B_{1/2})}  =  C_i\| \bar u_i\|_{L^\infty(B_{1/(2R)})} \]
Since we have that  $\| \bar u_1\|_{L^\infty(B_{1/(2R)})}$ and  $\| \bar u_2\|_{L^\infty(B_{1/(2R)})}$ are comparable (recall that $R\ge 4$) we obtain that $C_1$ and $C_2$ are comparable and thus, scaling back, that
\[ C_\varepsilon^{-1}  u_2\leq u_1\leq C_\varepsilon\,  u_2\qquad\textrm{in}\ \overline{B_{R/8}}.\]
Since $R$ can be taken arbitrarily large the Corollary follows.
\end{proof}

\section{Global solutions}
\label{sec4}

In this Section we prove that any global solution to the obstacle problem with subquadratic growth must be 1D on $\{x_n=0\}$.

\begin{thm}\label{thmclassif}
Let $F$ be as in \eqref{F}, and $u\in C(\R^n)$ be any viscosity solution of
\begin{equation}\label{eqnliouville}
\begin{cases}
F(D^2u)\leq 0\quad &\mbox{in } \R^n\\
F(D^2u) = 0  \quad &\mbox{in } \R^n\setminus\Omega^*\\
u=0 \quad &\mbox{on } \Omega^*\\
u\geq0 \quad &\mbox{on } \{x_n=0\},
\end{cases}\end{equation}
with
\begin{equation}\label{grad}
u(0)=0,\qquad \nabla u(0)=0.
\end{equation}
Assume that $u$ satisfies the following growth control
\begin{equation}\label{growth}
\|u\|_{L^\infty(B_R)} \le R^{2-\epsilon} \quad \mbox{ for all }R\ge 1.
\end{equation}
Then, either $u \equiv 0$, or
\[  u(x)=u_0(e\cdot x',\,x_n) \qquad \textrm{and}\qquad  \{u(x',0)=0\} =\{e\cdot x'\le 0\}\]
for some $e\in S^{n-2}$.
Moreover, $u_0$ is convex in the $x'$ variables.
\end{thm}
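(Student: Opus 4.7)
The plan is to follow the three-step strategy of Subsection~1.4. First, I would establish horizontal convexity of $u$ and then reduce via a blow-down to the case in which the contact set is a convex cone $\Sigma^*$. Second, I would dichotomize according to whether $\Sigma^*$ has empty or nonempty $(n-1)$-dimensional interior and derive the conclusion in each case. The boundary Harnack inequality of Corollary~\ref{bdry-H-allspace} is the central tool in the non-degenerate case.

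For the preliminary convexity step, consider the horizontal second differences $w_h(x) := u(x+h) + u(x-h) - 2u(x)$ with $h = (h',0)$. The convexity of $F$ combined with $F(D^2u)=0$ off $\Omega^*$ yields $M^-(D^2 w_h)\le 0$ in $\R^n\setminus(\Omega^*\cup(\Omega^*\pm h))$; on $\Omega^*$ one has $w_h\ge 0$ since $u\equiv 0$ there and $u\ge 0$ on the hyperplane; and the subquadratic growth of $u$ passes to controlled growth of $w_h$ so that the maximum principle of Lemma~\ref{lem-sumablegrowth} applies. Hence $w_h\ge 0$ in $\R^n$, i.e., $u$ is convex in horizontal directions, and in particular $\Omega^*$ is convex. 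A blow-down $u_R(x) := u(Rx)/\|u\|_{L^\infty(B_R)}$, using compactness from standard interior regularity estimates together with~\eqref{growth}, yields along subsequences a nontrivial limit $u_\infty$ still solving~\eqref{eqnliouville} and whose contact set is now a convex \emph{cone} $\Sigma^*$; it then suffices to classify such $u_\infty$.

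If $\Sigma^*$ has empty $(n-1)$-dimensional interior, it is thin enough to be removable for $F$ under the subquadratic growth, so $F(D^2 u_\infty)=0$ in all of $\R^n$; Evans--Krylov $C^{2,\alpha}$ estimates applied to the rescalings $R^{-(2-\epsilon)}u_\infty(Rx)$ combined with~\eqref{growth} force $D^2 u_\infty\equiv 0$, making $u_\infty$ affine, and then $u_\infty(0)=|\nabla u_\infty(0)|=0$ gives $u_\infty\equiv 0$. If instead $\Sigma^*$ has nonempty interior, convexity of $u_\infty$ together with $u_\infty\equiv 0$ on $\Sigma^*$ implies that for every $e$ in the open dual cone $K:=\{e\in S^{n-2}:-e\in\operatorname{int}\Sigma^*\}$ one has $\partial_e u_\infty\ge 0$ in all of $\R^n$ (the hyperplane-level positivity being transported off the hyperplane via Lemma~\ref{lem-sumablegrowth}). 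Each $v_e := \partial_e u_\infty$ satisfies $M^+(av_{e_1}+bv_{e_2})\ge 0$ for any $a,b\in\R$ off $\Sigma^*$, vanishes on $\Sigma^*$, is monotone in direction $e$ by global horizontal convexity, and is nonnegative; thus Corollary~\ref{bdry-H-allspace} applies to any pair $v_{e_1},v_{e_2}$ and yields the global comparability $C^{-1}v_{e_1}\le v_{e_2}\le C v_{e_1}$.

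To conclude that $\Sigma^*$ is a half-space, I would run a maximality argument: for two linearly independent $e_1,e_2\in K$, set $\alpha^* := \sup\{\alpha\ge 0 : v_{e_1}-\alpha v_{e_2}\ge 0 \text{ in } \R^n\}$. Then $w := v_{e_1}-\alpha^* v_{e_2}=\partial_{e_1-\alpha^*e_2}u_\infty$ is nonnegative; if $w\not\equiv 0$ then applying Corollary~\ref{bdry-H-allspace} again (with $w$ and $v_{e_2}$, $w$ inheriting monotonicity in direction $e_1-\alpha^*e_2$ from global horizontal convexity) gives $w\ge c\,v_{e_2}$ for some $c>0$, contradicting the maximality of $\alpha^*$. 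Thus $w\equiv 0$, so $u_\infty$ is translation-invariant in the horizontal direction $e_1-\alpha^*e_2$; iterating strips off all but one horizontal direction and forces $\Sigma^*=\{e\cdot x'\le 0\}$ and $u_\infty(x)=u_0(e\cdot x',x_n)$, with convexity of $u_0$ in $x'$ inherited from the first step. I expect the two main obstacles to be: (i)~proving horizontal convexity $w_h\ge 0$ rigorously, since on the non-contact portion of $\{x_n=0\}$ no PDE is imposed and the argument must rely on extension across the hyperplane together with the tailored maximum principle of Lemma~\ref{lem-sumablegrowth}; and (ii)~setting up the maximality step so that Corollary~\ref{bdry-H-allspace} remains applicable along the iteration, which requires careful tracking of the monotonicity directions and of which inward directions remain in $\Sigma^*$.
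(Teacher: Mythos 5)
Your skeleton (horizontal convexity, blow-down to a cone $\Sigma^*$, dichotomy on its interior, boundary Harnack for directional derivatives) is the paper's, but two key steps as you propose them would fail. First, the convexity step: Lemma \ref{lem-sumablegrowth} is a maximum principle in the half-space $\{x_n>0\}$ that requires the sign condition on \emph{all} of $\{x_n=0\}$ and the summable (essentially sublinear) growth \eqref{sumablegrowth}. For $w_h$ you only know $w_h\ge 0$ on $\Omega^*$ (on the rest of the hyperplane the sign of $w_h$ is precisely what you are trying to prove), and $w_h$ grows like $R^{2-\epsilon}$, so \eqref{sumablegrowth} fails. Nor can one hope for a maximum principle in the slit domain $\R^n\setminus\Omega^*$ with data only on $\Omega^*$: the function $-w_0^+$ from Corollary \ref{cor-1D} satisfies $M^-(-w_0^+)=0$ off the slit, vanishes on the slit, has sublinear growth, and is negative elsewhere. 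The paper does not obtain convexity by a maximum principle at all; it invokes the local semiconvexity estimates of \cite{Fer}, applied to the rescalings at scale $R$, which give $\partial_{e'e'}u\ge -CR^{-\epsilon}$ and hence convexity in the limit. Relatedly, your blow-down should be normalized as in the paper, via $\theta(R)=\sup_{R'\ge R}(R')^{\epsilon-2}\|u\|_{L^\infty(B_{R'})}$; dividing by $\|u\|_{L^\infty(B_R)}$ alone does not preserve the growth control $\|u_m\|_{L^\infty(B_{R'})}\le (R')^{2-\epsilon}$ needed for compactness and nondegeneracy of the limit.

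Second, the concluding maximality argument: Corollary \ref{bdry-H-allspace} is not a boundary Harnack for arbitrary nonnegative solutions vanishing on $\Sigma^*$; it requires each function to be monotone in a direction $\theta_i$ with $-\theta_i\in\Sigma^*$, and this hypothesis is used quantitatively in the proof (it gives $\theta_i\cdot e\ge\varepsilon$, which is what allows one to connect $B_{\varepsilon/2}$ to $\{x\cdot e\ge\varepsilon/4\}$ along the monotonicity direction and a Harnack chain). Your $w=v_{e_1}-\alpha^*v_{e_2}=\partial_{e_1-\alpha^*e_2}u_\infty$ is monotone only in the direction $e_1-\alpha^*e_2$, and there is no reason why the normalization of $-(e_1-\alpha^*e_2)$ should lie in $\Sigma^*$; hence the corollary cannot be applied to the pair $(w,v_{e_2})$, and the contradiction with the maximality of $\alpha^*$ is not obtained. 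The paper's Proposition \ref{c-2} avoids any iteration: after reducing to the case where $\Sigma^*$ contains no lines, it takes $e$ with $\Sigma^*\subset\{x'/|x'|\cdot e\le-\varepsilon\}$ for the maximal $\varepsilon$ and an extremal direction $e_1$ with $-e_1\in\Sigma^*$, $e_1\cdot e=\varepsilon$, applies the corollary to the two admissible derivatives $\partial_eu$, $\partial_{e_1}u$, and reads the resulting inequality $\partial_{e_1-ce}u\ge0$ as producing a new ray inside $\Sigma^*$ contradicting the maximality of $\varepsilon$. Finally, in the empty-interior case your ``removability'' claim is left unproved; the paper's route is to note that empty interior plus $C^{1,\alpha}$ regularity forces $u_{x_n}=0$ on $\{x_n=0\}$, so $u$ solves $F(D^2u)=0$ across the hyperplane and Lemma \ref{c-1} applies.
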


We will need the following intermediate steps in the proof of Theorem \ref{thmclassif}.

\begin{lem}\label{c-1}
Let $F$ be as in \eqref{F}, and $u\in C(\R^n)$ be any viscosity solution of
\[F(D^2u) = 0  \quad \mbox{in } \R^n,\]
with $u(0)=0$ and $\nabla u(0)=0$.
Assume that $u$ satisfies the growth control \eqref{growth}.
Then, $u\equiv0$.
\end{lem}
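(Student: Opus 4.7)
The plan is to apply a standard Liouville-type argument based on the Evans--Krylov interior regularity theorem. Since $F$ is convex and uniformly elliptic with $F(0)=0$, every viscosity solution $v$ of $F(D^2v)=0$ in $B_1$ belongs to $C^{2,\alpha}(B_{1/2})$ and satisfies
\[
\|D^2v\|_{L^\infty(B_{1/2})}\le C\|v\|_{L^\infty(B_1)},
\]
with $C$ universal; see \cite{CC}. I will exploit this estimate on arbitrarily large scales.

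Specifically, for each $R\ge 1$ set $v_R(y):=R^{-2}u(Ry)$. Because the equation $F(D^2u)=0$ is scale-invariant (one computes directly $D^2 v_R(y)=D^2u(Ry)$), $v_R$ is again a viscosity solution of $F(D^2 v_R)=0$ in $B_1$. Applying the Evans--Krylov bound to $v_R$ and unwinding the rescaling yields
\[
\|D^2u\|_{L^\infty(B_{R/2})} \le CR^{-2}\|u\|_{L^\infty(B_R)}.
\]
The subquadratic growth hypothesis \eqref{growth} then gives $\|D^2u\|_{L^\infty(B_{R/2})}\le CR^{-\epsilon}$, and letting $R\to\infty$ forces $D^2u\equiv 0$ on $\R^n$. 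Hence $u$ is affine, and the normalizations $u(0)=0$, $\nabla u(0)=0$ from \eqref{grad} imply $u\equiv 0$.

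The only delicate point is that the scaling closes only because we have $C^{2,\alpha}$ interior estimates. For a merely uniformly elliptic (non-convex) $F$ one would only have $C^{1,\alpha}$ estimates, whose rescaled version yields $[\nabla u]_{C^{\alpha}(B_{R/2})}\le CR^{1-\alpha-\epsilon}$, which does not decay unless $\alpha+\epsilon>1$. Thus the convexity assumption in \eqref{F} is essential here, and this is precisely where it is used in the proof of the lemma.
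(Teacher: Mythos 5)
Your argument is correct and is essentially the paper's own proof: both rescale $u$ to unit scale, apply the interior $C^{1,1}$/Evans--Krylov estimate for convex $F$ from \cite{CC}, use the growth control \eqref{growth} to get $\|D^2u\|_{L^\infty(B_{R/2})}\le CR^{-\epsilon}\to0$, and conclude that $u$ is affine, hence zero by \eqref{grad}. The only cosmetic difference is the normalization of the rescaling ($R^{-2}u(Ry)$ versus the paper's $u(Rx)/R^{2-\epsilon}$), which does not change the argument.
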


\begin{proof}
By interior $C^{1,1}$ estimates \cite{CC} ---here we use the convexity of the operator--- we have
\[\|D^2u\|_{L^\infty(B_1)}\leq C.\]
Applying the same estimate to the rescaled function $u(Rx)/R^{2-\epsilon}$, we find
\[\|D^2u\|_{L^\infty(B_R)}\leq CR^{-\epsilon},\]
for any $R\geq1$.
Letting $R\to\infty$, we deduce that $u$ is affine.
Since $u(0)=0$ and $\nabla u(0)=0$, it must be $u\equiv0$.
\end{proof}

We next prove the following.

\begin{prop}\label{c-2}
Let $F$ be as in \eqref{F}, and $u\in C(\R^n)$ be any viscosity solution of \eqref{eqnliouville}-\eqref{grad}-\eqref{growth} which is convex in the $x'=(x_1,...,x_{n-1})$ variables.

Assume in addition that $\Sigma^*=\{u=0\}\cap \{x_n=0\}$ is a closed convex cone with nonempty interior and vertex at the origin.
Then, either $u\equiv0$ or
\[\Sigma^*=\{x'\cdot e\leq 0\}\]
for some $e\in S^{n-2}$.
\end{prop}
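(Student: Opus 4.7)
The plan is to argue by contradiction: assume $u\not\equiv 0$ and show $\Sigma^*$ must be a half-space $\{x'\cdot e\leq 0\}$. The approach reduces the problem to a one-dimensional structure in $x'$ by combining the convexity of $u$ in $x'$ (which yields many nonnegative directional derivatives) with the boundary Harnack inequality of Corollary \ref{bdry-H-allspace} (which yields that all such derivatives are in fact proportional).

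First I would prove a monotonicity characterization: for $\theta\in\R^{n-1}$ with $-\theta\in\mathrm{int}(\Sigma^*)$, the derivative $w_\theta:=\partial_\theta u$ is nonnegative in all of $\R^n$, and conversely $w_\theta\geq 0$ in $\R^n$ implies $-\theta\in\Sigma^*$. For the direct statement: on $\{x_n=0\}$ the function $u(\cdot,0)$ is convex in $x'$, nonnegative, and vanishes along rays in direction $-\theta$ (since $-\theta$ lies in the interior of the cone $\Sigma^*$), hence is monotone nondecreasing in direction $\theta$. Off $\{x_n=0\}$, convexity of $F$ gives $Lw_\theta=0$ for the linearized operator $L=F_{ij}(D^2u)\partial_{ij}$, so in particular $M^-w_\theta\leq 0$; since interior $C^{1,1}$ estimates turn the subquadratic growth of $u$ into sublinear growth of $w_\theta$, the maximum principle of Lemma \ref{lem-sumablegrowth} extends $w_\theta\geq 0$ from $\{x_n=0\}$ to $\{x_n\neq 0\}$. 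The converse uses that monotonicity of $u$ in $\theta$ on $\{x_n=0\}$ forces $\Sigma^*$ to be stable under $-t\theta$-translations ($t\ge 0$), which for a convex cone with vertex at $0$ means $-\theta\in\Sigma^*$.

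If $\Sigma^*=\R^{n-1}\times\{0\}$, a Liouville-type argument (using $\nabla u(0)=0$, Hopf's lemma, and $F(D^2u)=0$ off $\{x_n=0\}$) forces $u\equiv 0$, contrary to assumption. Hence $\Sigma^*\subset\{x'/|x'|\cdot e\leq -\varepsilon\}$ for some $\varepsilon>0$ and $e\in S^{n-2}$, and Corollary \ref{bdry-H-allspace} applies. I would pick linearly independent $\theta_1,\theta_2\in -\mathrm{int}(\Sigma^*)$; if every $w_\theta$ for $\theta$ in this open set vanished, then $u=u(x_n)$ would follow, and $u(0)=u'(0)=0$ plus $F(D^2u)=0$ off $\{x_n=0\}$ would give $u\equiv 0$, again a contradiction. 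Otherwise $w_{\theta_1},w_{\theta_2}\not\equiv 0$ satisfy the hypotheses of Corollary \ref{bdry-H-allspace} (nonnegativity, vanishing on $\Sigma^*$, monotonicity in $\theta_i$ from $x'$-convexity, and $M^+(aw_{\theta_1}+bw_{\theta_2})\geq 0$ via the linearization), so $w_{\theta_1}\asymp w_{\theta_2}$ in all of $\R^n$. I would then upgrade this comparability to proportionality: letting $c_0:=\inf_{\R^n\setminus\Sigma^*} w_{\theta_1}/w_{\theta_2}\in(0,\infty)$, set $\tilde w:=w_{\theta_1}-c_0\,w_{\theta_2}=w_{\theta_1-c_0\theta_2}\geq 0$. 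The converse in the previous paragraph automatically places $\theta_1-c_0\theta_2\in -\Sigma^*$, so $\tilde w$ itself satisfies all the hypotheses of Corollary \ref{bdry-H-allspace}; hence if $\tilde w\not\equiv 0$, applying the inequality to the pair $(\tilde w,w_{\theta_2})$ would yield $\tilde w\geq c'\,w_{\theta_2}$ for some $c'>0$, contradicting the definition of $c_0$ as an infimum. Thus $w_{\theta_1}=c_0\,w_{\theta_2}$.

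Applying this proportionality to all pairs of directions in $-\mathrm{int}(\Sigma^*)$ and extending by linearity, I would obtain $\partial_\theta u=(\mathbf{c}\cdot\theta)\,w$ for all $\theta\in\R^{n-1}$, where $w$ is a fixed nonnegative function and $\mathbf{c}\in\R^{n-1}$ is a fixed vector. Then $\nabla_{x'}u=\mathbf{c}\,w$, the Schwarz identity forces $\nabla w$ parallel to $\mathbf{c}$, and so $u(x',x_n)=U(\mathbf{c}\cdot x',x_n)$ for some two-variable function $U$. The contact set takes the form $\Sigma^*=\{x':\mathbf{c}\cdot x'\in Z\}$ with $Z:=\{s:U(s,0)=0\}\subset\R$; being a convex cone through the origin with nonempty interior, $Z$ must be one of $[0,\infty)$, $(-\infty,0]$, $\R$. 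The last case was excluded at the start of the previous paragraph, and the other two give $\Sigma^*=\{x'\cdot e\leq 0\}$ with $e=\pm\mathbf{c}/|\mathbf{c}|$. The technical crux is the jump from comparability to proportionality: the key observation is that once $\tilde w$ is shown to be nonnegative, the monotonicity characterization \emph{automatically} places its own monotonicity direction $\theta_1-c_0\theta_2$ back into $-\Sigma^*$, keeping the boundary Harnack hypotheses intact and closing the iteration.
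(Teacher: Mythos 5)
Your overall strategy coincides with the paper's: convexity in $x'$ makes the tangential derivatives $\partial_\theta u$ (for $-\theta\in\Sigma^*$) nonnegative on $\{x_n=0\}$, Lemma \ref{lem-sumablegrowth} propagates this to all of $\R^n$, and Corollary \ref{bdry-H-allspace} compares two such derivatives. (One correction here: the sublinear bound for $\partial_\theta u$ should come from the convexity of $u$ in $x'$, as in the paper, not from ``interior $C^{1,1}$ estimates'', which degenerate as one approaches the thin obstacle and give nothing uniform up to $\{x_n=0\}$.) Your endgame is different from the paper's: instead of the extremal-angle argument (choose $e$ realizing the maximal aperture $\varepsilon$, take the extremal direction $e_1$, apply boundary Harnack once, and contradict maximality via $-(e_1-ce)\in\Sigma^*$), you upgrade comparability to exact proportionality through the infimum $c_0$ and a second application of boundary Harnack to $\tilde w=w_{\theta_1}-c_0 w_{\theta_2}$, and then read off the 1D structure. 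In the situation where it applies, that closing argument is essentially sound: your key observation that $\tilde w\ge0$ forces $-(\theta_1-c_0\theta_2)\in\Sigma^*$ is correct, and $\tilde w$ is monotone in its own direction by diagonal convexity, which is what Corollary \ref{bdry-H-allspace} actually requires.

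There is, however, a genuine gap in the reduction preceding all of this. You claim that once $\Sigma^*\neq\R^{n-1}\times\{0\}$ one has $\Sigma^*\subset\{x'/|x'|\cdot e\le-\varepsilon\}$ for some $\varepsilon>0$. This is false whenever $\Sigma^*$ contains a line: a line through the origin cannot lie in $\{x\cdot e\le-\varepsilon|x|\}$, and for $n\ge4$ there are closed convex cones with nonempty interior containing a line which are neither the whole hyperplane nor a half-space, e.g.\ $\{x_1\le0,\ x_2\le0\}\subset\R^{3}\times\{0\}$. For such $\Sigma^*$ Corollary \ref{bdry-H-allspace} is not applicable and your argument never starts. (The exact half-space also violates the strict inclusion; that case is of course the desired conclusion, but it means your pointed-case analysis must be read as ending in a contradiction --- no nonzero $u$ with pointed $\Sigma^*$ exists --- rather than as ``producing'' the half-space, so the case analysis needs to be restructured.) The paper fills precisely this hole with a dimension-reduction step that you omit: if $\Sigma^*$ contains a line $\R e'$, then $u(\cdot,0)$, being convex, nonnegative and zero on that line, is invariant along $e'$ on $\{x_n=0\}$; applying Lemma \ref{lem-sumablegrowth} to $\pm\partial_{e'}u$ (sublinear by convexity) gives $\partial_{e'}u\equiv0$ in $\R^n$, so $u$ descends to a solution in dimension $n-1$ and one may assume $\Sigma^*$ contains no lines, in which case the $\varepsilon$-inclusion does hold. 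You need this (or an equivalent) step. Two smaller points: the quantifier ``if every $w_\theta$ vanished'' should be ``if some $w_\theta\equiv0$'' (for $-\theta$ interior this already forces $\Sigma^*=\R^{n-1}\times\{0\}$, so afterwards no $w_{\theta_i}$ vanishes), and your list of possible $Z$ should include $\{0\}$, excluded because $\Sigma^*$ has nonempty interior.
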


\begin{proof}
Assume that $u$ is not identically zero and that $\Sigma^*$ is not a half-space.

Notice that if $\Sigma^*$ contains a line $\{te'\,:\, t\in\R\}$ then by convexity of $u$ we will have $u(x+te')=u(x)$ for all $t\in\R$, $x\in \R^n$.
Hence, if $\Sigma^*$ contains a line, $u$ is a solution in dimension $n-1$.
Therefore, by reducing the dimension $n$ if necessary, we may assume that $\Sigma^*$ contains no lines.

In particular,
\begin{equation}\label{cone-eps}
\Sigma^*\subset \left\{ \frac{x'}{|x'|}\cdot e\leq -\varepsilon\right\}
\end{equation}
for some $e\in S^{n-2}$ and some $\varepsilon>0$.

Let $\varepsilon>0$ be the largest positive number for which \eqref{cone-eps} holds.
Let $e_1\in S^{n-2}$ be such that $-e_1\in \Sigma^*$ and $-e_1\cdot e=-\varepsilon$.

Since $-e\in \Sigma^*$ and $-e_1\in \Sigma^*$, then by convexity of $u$ we have
\[w=\partial_e u\geq0\quad\textrm{and}\quad w_1=\partial_{e_1}u\geq0\quad \textrm{on}\quad \{x_n=0\}.\]
Moreover, since $\Sigma^*$ contains no lines, then these two functions are positive in $\{x_n=0\}\setminus\Sigma^*$.
Moreover, we have
\[M^+(aw+bw_1)\geq 0\quad \textrm{in}\quad \R^n\setminus\Sigma^*\]
for all $a,b\in \R$.
Furthermore, the convexity of $u$ and the growth control \eqref{growth} yield
\[\|w\|_{L^\infty(B_R)}+\|w_1\|_{L^\infty(B_R)}\leq CR^{1-\epsilon}.\]
By the maximum principle in Lemma \ref{lem-sumablegrowth}, this implies
\[w=\partial_e u\geq0\quad\textrm{and}\quad w_1=\partial_{e_1}u\geq0\quad \textrm{in}\quad \R^n.\]
Therefore, by the boundary Harnack type principle in Corollary \ref{bdry-H-allspace}, this means that
\[\partial_{e_1}u\geq c\partial_e u\quad \textrm{in}\quad \R^n.\]
Equivalently, $\partial_{e_1-ce}u\geq0$.
But then this yields $-(e_1-ce)\in\Sigma^*$, which combined with $-(e_1-ce)\cdot e=-\varepsilon-c$ is a contradiction with \eqref{cone-eps}.
\end{proof}

Using Lemma \ref{c-1} and Proposition \ref{c-2}, we can now give the:

\begin{proof}[Proof of Theorem \ref{thmclassif}]
If $u\equiv 0$ there is nothing to prove.
By the (local) semiconvexity estimates in \cite{Fer} applied (rescaled) to a sequence of balls with radius converging to infinity, we readily prove $u$ is convex in the $x'$ variables.
Thus, $\Omega^*$ is convex.

If $\Omega^*=\{x'\cdot e\leq 0\}$ for some $e\in S^{n-2}$, then by convexity we have $u(x',0)=u_0(x'\cdot e,0)$, and thus $u(x)=u_0(x'\cdot e,x_n)$, where $u_0$ is a 2D solution to the problem.

We next prove that if $\Omega^*$ is not a half-space, then there is no solution $u$.

Assume by contradiction that $\Omega^*$ is not a half-space and that $u$ is a nonzero solution.
Then, we do a blow-down argument, as follows.

For $R\ge 1$ define
\[ \theta(R)  = \sup_{R'\ge R} \frac{ \|u\|_{L^\infty(B_{R'})} }{ (R')^{2-\epsilon}}.\]
Note that $0<\theta(R)<\infty$ and that it is nonincreasing.

For all $m\in \mathbb N$  there  is $R'_m\ge m$ such that
\[ \ {(R'_m)}^{\epsilon-2} \|u_m\|_{L^\infty(B_R)} \ge \frac{\theta(m)}{2} \ge \frac{\theta(R'_m)}{2}.\]
Then the blow down sequence
\[ u_m(x) := \frac{u(R'_m x)}{(R'_m)^{2-\epsilon}\theta(R'_m)}\]
satisfies the growth control
\[   \|u_m\|_{L^\infty(B_{R})} \le R^{2-\epsilon}\quad \mbox{for all }R\ge 1 \]
and also
\[\|u_m\|_{L^\infty(B_1)} \ge \frac 12.\]

By $C^{1,\alpha}$ estimates \cite{Fer} and the Arzel\`a-Ascoli theorem, the sequence $u_m$ converges (up to a subsequence) locally uniformly in $C^1$ to a function $u_\infty$ satisfying
\begin{equation}\label{growthuinfty}
\|u_\infty\|_{L^\infty(B_{R})} \le R^{2-\epsilon}\quad \mbox{for all }R\ge 1,
\end{equation}
\begin{equation}\label{gnondeguinfty}
\|u_\infty\|_{L^\infty(B_1)}  \ge \frac 12,
\end{equation}
and
\begin{equation}\begin{cases}
F(D^2 u_\infty) = 0  \quad &\mbox{in } \R^n\setminus \Sigma^*\\
F(D^2 u_\infty) \leq 0  \quad &\mbox{in } \R^n\\
D^2 u_\infty \ge 0 \quad &\mbox{in } \R^n\\
u_\infty = 0 &\mbox{in }\Sigma^*,
\end{cases}\end{equation}
where $\Sigma^*$ is the blow-down of the convex set $\Omega^*$.
Notice that, by convexity, since $\Omega^*$ was not a half-space, then $\Sigma^*$ is not a half-space.

If $\Sigma^*$ has nonempty interior, by Proposition \ref{c-2} there is no solution $u$.
If $\Sigma^*$ has empty interior, then by $C^{1,\alpha}$ regularity of $u$ we get $u_{x_n}=0$ in all of $\{x_n=0\}$.
But using Lemma \ref{c-1}, this yields $u\equiv0$ as well.

Thus, if $\Omega^*$ is not a half-space there is no nonzero solution $u$, as claimed.
\end{proof}

We also prove the following.

\begin{cor}\label{corclassif}
Let $F$ be as in \eqref{F}, and $\beta_1\in(0,\frac12)$ be given by Corollary \ref{cor-1D}.
Let $u\in C(\R^n)$ be any viscosity solution of \eqref{eqnliouville} satisfying \eqref{grad} and
\begin{equation}\label{growth-beta}
\|u\|_{L^\infty(B_R)} \le R^{1+\beta} \quad \mbox{ for all }R\ge 1,
\end{equation}
with $\beta<\beta_1$.
Then, $u \equiv 0$.
\end{cor}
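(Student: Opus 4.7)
The plan is to apply Theorem~\ref{thmclassif} to reduce to a 2D Liouville-type statement, then compare a derivative of the solution with the critical 1D profile $w_0^+$ from Corollary~\ref{cor-1D} to force that derivative to vanish. Since $1 + \beta < 1 + \beta_1 < 3/2 < 2$, the hypothesis \eqref{growth-beta} implies the subquadratic bound \eqref{growth} (with $\epsilon = 1-\beta$), so Theorem~\ref{thmclassif} applies: either $u\equiv 0$ (done) or $u(x) = u_0(e \cdot x', x_n)$ for some $e \in S^{n-2}$, with $u_0$ convex in the first variable and contact set $\{e \cdot x' \le 0\}$. Reducing to the two-dimensional equation on $\mathrm{span}(e, e_n) \subset \R^n$, I work with a 2D function $u_0(x,y)$---nontrivial, nonnegative, convex in $x$, vanishing on $\Sigma^* := (-\infty,0]\times\{0\}$, satisfying $\|u_0\|_{L^\infty(B_R)} \le R^{1+\beta}$---solving a 2D convex uniformly elliptic thin obstacle problem with the same constants $\lambda,\Lambda$.

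Define $v := \partial_x u_0$. By convexity $v \ge 0$; since $u_0 \equiv 0$ on the relative interior of $\Sigma^*$, continuity gives $v \equiv 0$ on $\Sigma^*$. By Evans--Krylov, $u_0 \in C^{2,\alpha}_{\rm loc}(\R^2 \setminus \Sigma^*)$, and linearizing the equation in $x$ yields $Lv = 0$ in $\R^2 \setminus \Sigma^*$ with $L = F_{ij}(D^2 u_0)\partial_{ij}$ a linear operator in the Pucci class; interior $C^{1,\alpha}$ estimates applied to the rescalings $u_0(R\,\cdot)/R^{1+\beta}$ give $|v(z)| \le C(1 + |z|)^\beta$. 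I then compare $v$ with the ground state $w_0^+ = \varphi_{\beta_1}^+(x, |y|)$ from Corollary~\ref{cor-1D}, which is homogeneous of degree $\beta_1 \in (\beta, 1/2)$, positive on $\R^2 \setminus \Sigma^*$, zero on $\Sigma^*$, and satisfies $M^+ w_0^+ = 0$. Since $Lw_0^+ \le M^+ w_0^+ = 0$, $w_0^+$ is an $L$-supersolution and $v - tw_0^+$ is an $L$-subsolution vanishing on $\Sigma^*$ for each $t > 0$. On $\partial B_R$ away from $\Sigma^*$ the growth gap gives $v - tw_0^+ \le CR^\beta - tcR^{\beta_1} \le 0$ for $R$ large (with $t$ fixed). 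After handling the portion of $\partial B_R$ near $\Sigma^*$ via an auxiliary barrier (of the type constructed in Lemma~\ref{lemsupersol}, used to absorb the near-boundary behavior of $v$), the maximum principle for $L$ on $B_R \setminus \Sigma^*$ gives $v \le tw_0^+$ in $B_R$; sending $R \to \infty$ and then $t \downarrow 0$ yields $v \equiv 0$.

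Once $v \equiv 0$, the function $u_0$ depends only on $y$, and $u_0(0,0) = 0$ then forces $u_0(\cdot, 0) \equiv 0$. The reduced 1D equation for $u_0''(y)$ on $\R \setminus \{0\}$ combined with strict monotonicity of $F$ (coming from uniform ellipticity) yields $u_0'' \equiv 0$; together with $u_0(0) = u_0'(0) = 0$ from \eqref{grad}, this gives $u_0 \equiv 0$, contradicting nontriviality. The main obstacle is the comparison step near $\Sigma^*$ on $\partial B_R$: both $v$ and $w_0^+$ vanish there, so the growth gap $\beta < \beta_1$ alone does not immediately yield $v \le tw_0^+$ on the near-boundary portion of $\partial B_R$, and one must invoke a Hopf-type or barrier argument (exploiting the normalization $\overline C(\beta_1) = 0$ that makes $w_0^+$ the extremal profile) to close the comparison.
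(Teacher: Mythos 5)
Your overall strategy coincides with the paper's: reduce to $n=2$ via Theorem \ref{thmclassif}, set $v=\partial_{x_1}u_0\ge 0$ (vanishing on the slit, satisfying $M^-v\le 0\le M^+v$ outside it, with $\|v\|_{L^\infty(B_R)}\le CR^\beta$), compare with the $\beta_1$-homogeneous profile $w_0^+$ of Corollary \ref{cor-1D}, and finish by noting that $v\equiv0$ forces $u_0=\psi(x_2)$, hence linear, hence $0$. The gap is in the comparison itself. You propose to fix $t>0$ and prove $v\le t\,w_0^+$ in $B_R$ for large $R$ by checking the inequality on $\partial B_R$: away from the slit the growth gap $CR^\beta\le t\,cR^{\beta_1}$ indeed works, but on the portion of $\partial B_R$ near the slit both functions degenerate and the ratio $v/w_0^+$ is not controlled by the growth hypothesis at all. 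You acknowledge this is ``the main obstacle'' and appeal to an unspecified auxiliary barrier, but the barrier of Lemma \ref{lemsupersol} does not vanish on the slit and cannot absorb boundary values of $v$ (which may be as large as $CR^\beta$ in that layer) while remaining below $t\,w_0^+$ inside, unless one first proves a quantitative decay of $v$ toward the slit at scale $R$ --- an estimate you never derive. Note that for fixed small $t$ the statement ``$v\le t\,w_0^+$ in $B_R$ for all large $R$'' is essentially the full conclusion $v\equiv0$, so all the work is hidden in this unproved boundary-layer step; also, the role you attribute to $\overline C(\beta_1)=0$ there is misplaced --- that normalization is what makes $w_0^+$ solve the equation across $\{x_1>0,\ x_2=0\}$, not what controls the near-slit matching.

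The paper closes this step differently, by a scale-by-scale comparison in annuli: at unit scale, interior Harnack plus a simple maximum-principle argument give $0\le v\le C\,w_0^+$ in $B_2\setminus B_1$ (using only $\|v\|_{L^\infty(B_3)}\le C$), and the comparison principle in the slit domain then propagates the inequality inside $B_2$. Rescaling, the same argument at scale $R$ --- with $\|v\|_{L^\infty(B_{3R})}\le CR^\beta$ and $w_0^+\simeq R^{\beta_1}$ there by homogeneity --- yields $0\le v\le CR^{\beta-\beta_1}\,w_0^+$ in $B_{2R}\setminus B_R$ and hence in the ball, and since $\beta<\beta_1$ letting $R\to\infty$ gives $v\equiv0$. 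The point is that the constant may be large at unit scale and improves automatically like $R^{\beta-\beta_1}$, so no fixed-$t$ comparison and no delicate control of $v$ on the near-slit part of a large sphere is ever needed. To repair your argument you would have to prove precisely such a uniform-in-$R$ annulus estimate, at which point you have reproduced the paper's proof.
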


\begin{proof}
By Theorem \ref{thmclassif}, we know that $u(x)=u_0(x'\cdot e,x_n)$, with $u_0$ convex in the first variable and vanishing on  $\{x_1\leq 0\}\cap\{x_2=0\}$.
Thus, we only need to prove the result in dimension $n=2$.
We denote $v=\partial_{x_1}u\geq0$ in $\R^2$.
Notice that
\[\begin{cases}
M^+ v\geq 0,\quad M^-v\leq 0\quad &\textrm{in}\ \R^2\setminus\bigl(\{x_1\leq 0\}\cap\{x_2=0\}\bigr)\\
v=0\quad &\textrm{on}\ \{x_1\leq 0\}\cap\{x_2=0\}.
\end{cases}\]
Notice also that, by convexity and \eqref{growth-beta}, we have $\|v\|_{L^\infty(B_R)}\leq CR^{\beta}$.

We now use the supersolution given by Corollary \ref{cor-1D}.
Indeed, let $w=w_0^+$ be the homogeneous function of degree $\beta_1$ satisfying
\[\begin{cases}
M^+ w=0\quad &\textrm{in}\ \R^2\setminus\bigl(\{x_1\leq 0\}\cap\{x_2=0\}\bigr)\\
w=0\quad &\textrm{on}\ \{x_1\leq 0\}\cap\{x_2=0\}.
\end{cases}\]
Then, using interior Harnack inequality, a simple application of the maximum principle yields
\[0\leq v\leq Cw\quad\textrm{in}\quad B_2\setminus B_1.\]
Here, we used that $\|v\|_{L^\infty(B_3)}\leq C$.
By comparison principle, we deduce
\[0\leq v\leq Cw\quad \textrm{in}\quad B_2.\]

Repeating the same argument at all scales $R\geq1$ ---using the rescaled functions $R^{-\beta_1}w(Rx)=w(x)$ and $R^{-\beta_1}v(Rx)$---, we find
\[0\leq v\leq CR^{\beta-\beta_1}w\quad \textrm{in}\quad B_{2R}\setminus B_R.\]
Here, we used that $\|v\|_{L^\infty(B_{3R})}\leq CR^{\beta}$.

By comparison principle, the previous inequality yields
\[0\leq v\leq CR^{\beta-\beta_1}w\quad \textrm{in}\quad B_R,\]
and thus letting $R\to\infty$ we find $v\equiv0$.
This means that $u(x_1,x_2)=\psi(x_2)$, for some function $\psi$.
But since $F(D^2u)=0$ in $\{x_2>0\}$ and in $\{x_2<0\}$, then $u(x_1,x_2)=ax_2$, and since $\nabla u(0)=0$, then $u\equiv0$, as desired.
\end{proof}

\section{Regular points and blow-ups}
\label{sec5}

We start in this section the study of free boundary points.
For this, we use some ideas from \cite{CRS}.

After a translation, we may assume that the free boundary point is located at the origin.
Moreover, by subtracting a plane, we may assume that
\[u(0)=0\quad\textrm{ and }\quad \nabla u(0)=0.\]
Moreover, we assume
\[\|u\|_{L^\infty(B_1)}=1,\qquad \|\varphi\|_{C^{1,1}}\leq1.\]

We say that a free boundary point is regular whenever (ii) in Theorem \ref{thm1} does \emph{not} hold, that is:

\begin{defi}
We say that $0\in\partial\{u=\varphi\}$ is a \emph{regular} free boundary point if
\[\limsup_{r\downarrow0}\frac{\|u\|_{L^\infty(B_r)}}{r^{2-\epsilon}}=\infty\]
for some $\epsilon>0$.
We say that it is a regular point with exponent $\epsilon$ and modulus $\nu$ if
\[\sup_{\rho\leq r\leq 1}\frac{\|u\|_{L^\infty(B_r)}}{r^{2-\epsilon}}\geq \nu(\rho)\]
where $\nu(\rho)$ is a given nonincreasing function satisfying $\nu(\rho)\to\infty$ as $\rho\downarrow0$.
\end{defi}

The main result of this section is the following.

\begin{prop}\label{prop-rescalings}
Assume that $0$ is a regular free boundary point with exponent $\epsilon$ and modulus $\nu$.
Then, given $\delta>0$, there is $r>0$ such that the rescaled function
\[ v(x) : = \frac{u(r x)}{\|u\|_{L^\infty(B_r)}} \]
satisfies
\begin{equation}
\bigl| v - u_0\bigr| + \bigl|\nabla v - \nabla u_0\bigr| \leq \delta \quad \mbox{in } B_1,
\end{equation}
for some global convex solution $u_0$ of \eqref{eqnliouville}-\eqref{grad}-\eqref{growth}, with $\|u_0\|_{L^\infty(B_1)}=1$.
The constant $r$ depends only on $\delta$, $\epsilon$, $\nu$, $n$, and $\lambda, \Lambda$.
\end{prop}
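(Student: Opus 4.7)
The plan is to argue by contradiction. Assuming the conclusion fails, there exist $\delta>0$ and a sequence $r_k\downarrow 0$ such that the rescaled functions $v_k(x):=u(r_kx)/\|u\|_{L^\infty(B_{r_k})}$ are never $C^1(B_1)$-close, up to $\delta$, to any admissible $u_0$ as in the statement. The goal is to extract a subsequential $C^1_{\mathrm{loc}}$-limit of these $v_k$ that does satisfy all the required properties, producing the contradiction.

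The crucial step is the selection of the rescaling radii. Following the standard Almgren-type trick, set
\[\theta(\rho):=\sup_{\rho\leq r\leq 1}\frac{\|u\|_{L^\infty(B_r)}}{r^{2-\epsilon}},\]
which is nonincreasing in $\rho$ and, by the regular point hypothesis, satisfies $\theta(\rho)\geq\nu(\rho)\to\infty$. For each small $\rho$ I pick $r_\rho\in[\rho,1]$ with $\|u\|_{L^\infty(B_{r_\rho})}r_\rho^{\epsilon-2}\geq\theta(\rho)/2$; since $\|u\|_{L^\infty(B_1)}=1$ while $\theta(\rho)\to\infty$, necessarily $r_\rho\downarrow 0$. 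For $r_\rho\leq r\leq 1$ the monotonicity of $\theta$ gives $\|u\|_{L^\infty(B_r)}r^{\epsilon-2}\leq\theta(r_\rho)\leq\theta(\rho)\leq 2\|u\|_{L^\infty(B_{r_\rho})}r_\rho^{\epsilon-2}$, so the rescaling $v_\rho(x):=u(r_\rho x)/\|u\|_{L^\infty(B_{r_\rho})}$ enjoys the uniform subquadratic bound
\[\|v_\rho\|_{L^\infty(B_R)}\leq 2R^{2-\epsilon}\quad\text{for all }1\leq R\leq 1/r_\rho,\qquad \|v_\rho\|_{L^\infty(B_1)}=1.\]

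Next, I identify the problem solved by $v_\rho$. Setting $c_\rho:=\|u\|_{L^\infty(B_{r_\rho})}/r_\rho^2$, the function $v_\rho$ is a viscosity solution to the thin obstacle problem with operator $F_\rho(M):=c_\rho^{-1}F(c_\rho M)$ and obstacle $\varphi_\rho(x'):=\varphi(r_\rho x')/\|u\|_{L^\infty(B_{r_\rho})}$. At the origin $u(0)=\varphi(0)=0$, and since $u-\varphi\geq 0$ on $\{x_n=0\}$ with equality at $0$ together with $\nabla u(0)=0$, one has $\nabla_{x'}\varphi(0)=0$; by $\varphi\in C^{1,1}$ this gives $|\varphi(x')|\leq C|x'|^2$, so
\[\|\varphi_\rho\|_{L^\infty(B_1^*)}\leq \frac{Cr_\rho^2}{\|u\|_{L^\infty(B_{r_\rho})}}\leq \frac{2Cr_\rho^{\epsilon}}{\theta(\rho)}\longrightarrow 0.\]
On the other hand $c_\rho\geq\tfrac12\theta(\rho)r_\rho^{-\epsilon}\to\infty$, so by convexity of $F$ and $F(0)=0$ the recession operator $F_\infty(M):=\lim_{c\to\infty}c^{-1}F(cM)$ exists locally uniformly in $M$; it is convex, positively $1$-homogeneous, and inherits the ellipticity $\lambda,\Lambda$ of $F$, so $F_\infty$ satisfies \eqref{F}, and $F_\rho\to F_\infty$ locally uniformly.

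Finally, the $C^{1,\alpha}$ estimates up to $\{x_n=0\}$ from \cite{Fer}, applied on each fixed ball after rescaling to use the subquadratic bound on $B_R$, yield uniform $C^1_{\mathrm{loc}}$ compactness of $\{v_\rho\}$. Along a subsequence $\rho_k\to 0$ one extracts $v_{\rho_k}\to u_0$ in $C^1_{\mathrm{loc}}(\R^n)$; by stability of viscosity solutions under operator and obstacle convergence, $u_0$ is a global viscosity solution of \eqref{eqnliouville} (with $F$ replaced by $F_\infty$) and zero obstacle. The normalizations carry to the limit: $\|u_0\|_{L^\infty(B_1)}=1$, $u_0(0)=0$, $\nabla u_0(0)=0$, and the subquadratic growth \eqref{growth} is inherited. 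Convexity of $u_0$ in the $x'$ variables follows from the semiconvexity estimates in \cite{Fer} applied at scales diverging to infinity, exactly as in the proof of Theorem \ref{thmclassif}. For large $k$ this $u_0$ is arbitrarily $C^1(B_1)$-close to $v_{\rho_k}$, contradicting our standing assumption. The principal obstacle is the construction of the rescaling in step~2: one must simultaneously arrange a nondegenerate normalization and a uniform subquadratic bound on every dyadic annulus up to the boundary of the expanding domain, without any a priori knowledge of a homogeneity exponent; once this is secured, the compactness and stability steps are routine.
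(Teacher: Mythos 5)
Your construction of the rescaling radii is exactly the paper's Lemma \ref{simple-lem} (the nonincreasing quantity $\theta(\rho)$, a radius $r_\rho\in[\rho,1]$ where the sup is nearly attained, and the resulting bound $\|v_\rho\|_{L^\infty(B_R)}\le 2R^{2-\epsilon}$ up to scale $1/r_\rho$), and the compactness/stability step, the smallness of the rescaled obstacle through $\varphi(0)=0$, $\nabla_{x'}\varphi(0)=0$, and the convexity of the limit via the semiconvexity estimates of \cite{Fer} are all as in the paper's Lemma \ref{lem-compactness}. Your treatment of the rescaled operators through the recession operator $F_\infty(M)=\lim_{c\to\infty}c^{-1}F(cM)$ (monotone by convexity and $F(0)=0$, bounded by $M^+(M)$, hence a locally uniform limit with the same ellipticity) is a clean alternative to the paper, which instead extracts a subsequential locally uniform limit of the rescaled operators; either way the limiting equation involves some operator satisfying \eqref{F}, not the original $F$, and that is all Theorem \ref{thmclassif} needs, so this is consistent with how the paper itself uses \eqref{eqnliouville}.

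There is, however, a genuine gap with respect to the full statement: your contradiction is run for the single, fixed solution $u$, so it only yields \emph{some} $r>0$ for that $u$, not the asserted dependence of $r$ only on $\delta,\epsilon,\nu,n,\lambda,\Lambda$. This quantitative uniformity is not decorative: it is used later (Propositions \ref{prop-Lip} and \ref{contagi} and the proof of Theorem \ref{thm1}) to treat all nearby free boundary points --- i.e.\ translates of $u$, which are different functions --- with a common modulus, so a per-solution existence statement is not enough. The repair is to quantify the compactness over sequences: assume there are operators $F_k$ as in \eqref{F}, obstacles $\varphi_k$, solutions $u_k$ normalized as at the beginning of Section \ref{sec5}, all regular at $0$ with the \emph{same} exponent $\epsilon$ and modulus $\nu$, for which no admissible radius larger than $1/k$ works; this is precisely the paper's Lemma \ref{lem-compactness}, whose constant $\eta$ depends only on $\delta,\epsilon,n,\lambda,\Lambda$. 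Note that in this setting your recession-operator argument no longer applies as stated, since the rescaled operators $c_k^{-1}F_k(c_k\,\cdot\,)$ come from unrelated $F_k$'s; one must instead use that they are uniformly Lipschitz with $F_k(0)=0$ and pass to a subsequential locally uniform limit (Arzel\`a--Ascoli), as the paper does.
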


To prove this, we need the following intermediate step.

\begin{lem}\label{lem-compactness}
Given $\delta>0$, there is $\eta=\eta(\delta,\epsilon,n,\lambda,\Lambda)>0$ such that the following statement holds.

Let $\varphi$ be such that $\|\varphi\|_{C^{1,1}}\leq \eta$, and let $v\geq0$ be a function satisfying $v(0)=0$, $\nabla v(0)=0$,
\begin{equation}\label{comp1}
\begin{array}{rclll}
F(D^2v)&=&0\,&\textrm{in} &B_{1/\eta}\setminus \{x_n=0\}\\
\min(-F(D^2v),\,v-\varphi)&=& 0\,&\textrm{on} &B_{1/\eta}\cap \{x_n=0\},
\end{array}
\end{equation}
and
\begin{equation}\label{comp3}
\|v\|_{L^\infty(B_1)}=1,\qquad \|v\|_{L^\infty(B_R)}\leq CR^{2-\epsilon}\quad \textrm{for}\quad 1\leq R\leq 1/\eta.
\end{equation}
Then,
\[\bigl| v - u_0\bigr| + \bigl|\nabla v - \nabla u_0\bigr| \leq \delta \quad \mbox{in } B_1,\]
for some global convex solution $u_0$ of \eqref{eqnliouville}-\eqref{grad}-\eqref{growth}, with $\|u_0\|_{L^\infty(B_1)}=1$.
\end{lem}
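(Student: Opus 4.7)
The plan is to argue by compactness and contradiction. Suppose the statement fails. Then there exist $\delta>0$, a sequence $\eta_k\downarrow 0$, obstacles $\varphi_k$ with $\|\varphi_k\|_{C^{1,1}}\leq \eta_k$, and functions $v_k\geq 0$ satisfying $v_k(0)=0$, $\nabla v_k(0)=0$, the fully nonlinear thin obstacle problem \eqref{comp1} in $B_{1/\eta_k}$, and the growth control \eqref{comp3} with the same exponent $\epsilon$ and constant, but such that $v_k$ is not $\delta$-close in $C^1(B_1)$ to \emph{any} global convex solution $u_0$ of \eqref{eqnliouville}--\eqref{growth} with $\|u_0\|_{L^\infty(B_1)}=1$.

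Next, I would extract uniform compactness for $\{v_k\}$ on every fixed ball $B_R$. By the growth control, $\|v_k\|_{L^\infty(B_R)}\leq C R^{2-\epsilon}$ once $k$ is large enough so that $1/\eta_k\geq R$, and by the $C^{1,\alpha}$ estimates up to $\{x_n=0\}$ established by Fern\'andez-Real \cite{Fer} (rescaled on balls of fixed radius, using that $\|\varphi_k\|_{C^{1,1}}\to 0$ so the rescaled obstacles stay uniformly bounded in $C^{1,1}$), the sequence is bounded in $C^{1,\alpha}(\overline{B_R})$ for every $R$. Hence, by Arzel\`a--Ascoli and a diagonal extraction, along a subsequence $v_k \to u_\infty$ locally uniformly in $C^1(\R^n)$. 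The limit $u_\infty$ then inherits the growth bound $\|u_\infty\|_{L^\infty(B_R)}\leq C R^{2-\epsilon}$ for all $R\geq 1$ and the normalization $\|u_\infty\|_{L^\infty(B_1)}=1$, together with $u_\infty(0)=\nabla u_\infty(0)=0$.

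I would then verify that $u_\infty$ solves \eqref{eqnliouville} globally. Since $\varphi_k\to 0$ uniformly (even in $C^{1,1}$), the stability of viscosity solutions under uniform convergence of both the solutions and the obstacles yields that $u_\infty$ satisfies $F(D^2 u_\infty)\leq 0$ in $\R^n$, $F(D^2 u_\infty)=0$ in $\R^n\setminus\Omega^*$, and $u_\infty=0$ on $\Omega^*:=\{u_\infty=0\}\cap\{x_n=0\}$, with $u_\infty\geq 0$ on $\{x_n=0\}$. Moreover, the semiconvexity estimates of \cite{Fer} applied to $v_k$ on balls of any fixed radius give $\partial_{ee}v_k\geq -\omega(\eta_k)$ for every direction $e\in\R^{n-1}\times\{0\}$ with $\omega(\eta_k)\to 0$; passing to the limit in $k$ shows that $u_\infty$ is convex in the $x'$ variables. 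Thus $u_\infty$ is precisely a global convex solution of \eqref{eqnliouville}--\eqref{growth} with $\|u_\infty\|_{L^\infty(B_1)}=1$.

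Finally, by the $C^1$ convergence $v_k\to u_\infty$ on $B_1$, we have $\|v_k-u_\infty\|_{C^1(B_1)}\to 0$, contradicting the standing assumption. The main technical obstacle is ensuring all of (i) the uniform $C^{1,\alpha}$ bounds up to $\{x_n=0\}$, (ii) the uniform semiconvexity in $x'$ (both provided by \cite{Fer}, modulo the vanishing $C^{1,1}$-norm of $\varphi_k$), and (iii) the stability of the thin obstacle problem under vanishing obstacles; once these are in place, the compactness argument closes.
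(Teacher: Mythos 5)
Your overall strategy is the same as the paper's: argue by contradiction, use the $C^{1,\alpha}$ estimates of \cite{Fer} to get compactness, pass to a limit by Arzel\`a--Ascoli and stability of viscosity solutions, identify the limit as a global convex solution of \eqref{eqnliouville}--\eqref{grad}--\eqref{growth} normalized in $B_1$, and contradict the $C^1$-distance assumption. However, the step where you obtain convexity of the limit does not work as written. You claim that the semiconvexity estimates of \cite{Fer}, applied to $v_k$ on balls of \emph{fixed} radius, give $\partial_{ee}v_k\geq-\omega(\eta_k)$ with $\omega(\eta_k)\to0$ just because $\|\varphi_k\|_{C^{1,1}}\to0$. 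This is false: the estimate of \cite{Fer} is a local estimate and necessarily carries a term proportional to the size of the solution, not only of the obstacle. For instance $v(x)=1-|x'|^2+(n-1)x_n^2$ is a nonnegative, even solution of the zero-obstacle Signorini problem for the Laplacian in $B_1$ with $\partial_{ee}v\equiv-2$ in every tangential direction, so no bound of the form $\partial_{ee}v\geq-\omega\bigl(\|\varphi\|_{C^{1,1}}\bigr)$ at unit scale can hold. Since $\|v_k\|_{L^\infty(B_1)}=1$, on fixed balls the semiconvexity defect is of order one, not $o(1)$. The correct route (and the one the paper takes, inside the proof of Theorem \ref{thmclassif}, which its proof of this lemma then invokes) is to exploit the growth control \eqref{comp3} together with the fact that the domains $B_{1/\eta_k}$ exhaust $\R^n$: applying the semiconvexity estimate to the rescalings $v_k(R\,\cdot)/R^2$ for $1\le R\le 1/\eta_k$ gives a defect of order $R^{-\epsilon}+\eta_k$ at unit scale, i.e. $\partial_{ee}v_k\geq -C\bigl(R^{-\epsilon}+\eta_k\bigr)$ in $B_{R/2}$, and convexity of $u_\infty$ follows by letting first $k\to\infty$ and then $R\to\infty$ (equivalently, one may simply apply Theorem \ref{thmclassif} to the limit, whose conclusion includes convexity in $x'$).

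A second, smaller point: the lemma asserts $\eta=\eta(\delta,\epsilon,n,\lambda,\Lambda)$, independent of the particular operator $F$. In your contradiction argument $F$ is kept fixed, so you would only obtain an $\eta$ possibly depending on $F$. The paper avoids this by letting the operators $F_k$ vary in the class \eqref{F} along the contradicting sequence and extracting a locally uniformly convergent subsequence (the $F_k$ are uniformly Lipschitz functions of the Hessian), so that the limit $u_\infty$ solves the problem for a limiting convex operator with the same ellipticity constants. Both issues are repairable within your scheme, but as written the convexity step would fail.
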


\begin{proof}
The proof is by a compactness.
Assume by contradiction that for some $\delta>0$ we have a sequence $\eta_k\to0$, fully nonlinear convex operators $F_k$ with ellipticity constants $\lambda,\Lambda$, obstacles $\varphi_k$ with $\|\varphi_k\|_{C^{1,1}}\leq \eta_k$, and functions $v_k\geq 0$ satisfying $v_k(0)=0$, $\nabla v_k(0)=0$, \eqref{comp1}, and \eqref{comp3}, but such that
\begin{equation}\label{contr}
\bigl\| v_k - u_0 \bigr\|_{C^1(B_1)} \ge \delta \quad \mbox{for all global solution } u_0\quad\textrm{with}\quad \|u_0\|_{L^\infty(B_1)}=1.
\end{equation}
By the estimates in \cite{Fer,MS}, we have that $v_k$ are $C^{1,\alpha}$ in $B_R$, $R<1/\eta_k$, with an estimate
\[ \|v_k\|_{C^{1,\alpha}(B_R)} \le C(R) \quad \mbox{for all}\quad 1\leq R\leq 1/2\eta_k.\]
Thus, up to taking a subsequence, the operators $F_k$ converge (locally uniformly as Lipchitz functions of the Hessian) to some fully nonlinear convex operator $F$ with ellipticity constants $\lambda,\Lambda$. Likewise,  the functions $v_k$ converge in $C^1_{\rm loc}(\R^n)$ to a function $v_\infty$, which by stability of viscosity solutions ---see \cite{CC}--- is a global convex solution to the obstacle problem \eqref{eqnliouville} and satisfying \eqref{grad} and~\eqref{growth}.

By the classification result Theorem \ref{thmclassif}, we have
\[ v_\infty \equiv u_0,\quad \mbox{for some global solution}\ u_0.\]
Moreover, by \eqref{comp3} we have
\[\|u_0\|_{L^\infty(B_1)}=\|v_\infty\|_{L^\infty(B_1)}=1.\]

We have shown that $v_k\rightarrow u_0$ in the $C^1$ norm, uniformly on compact sets.
In particular, \eqref{contr} is contradicted for large $k$, and thus the lemma is proved.
\end{proof}

To prove Proposition \ref{prop-rescalings} we will also need the following.

\begin{lem}\label{simple-lem}
Assume $w\in L^\infty(B_1)$ satisfies $\|w\|_{L^\infty(B_1)}=1$, and
\[\sup_{\rho\leq r\leq 1}\frac{\|w\|_{L^\infty(B_r)}}{r^{2-\epsilon}}\geq \nu(\rho)\to\infty\quad \textrm{as}\ \rho\to0.\]
Then, there is a sequence $r_k\downarrow0$ for which $\|w\|_{L^\infty(B_{r_k})}\geq \frac12r_k^\mu$, and for which the rescaled functions
\[w_k(x)=\frac{w(r_k x)}{\|w\|_{L^\infty(B_{r_k})}}\]
satisfy
\[|w_k(x)|\leq C(1+|x|^\mu)\quad\textrm{in}\ B_{1/r_k},\]
with $C=2$.
Moreover, we have
\[ 0<1/k \le r_k\leq (\nu(1/k))^{-1/\mu}.\]
\end{lem}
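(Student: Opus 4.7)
The plan is to obtain $r_k$ as an approximate maximizer of the normalized growth ratio
\[\theta(r) := r^{-\mu}\|w\|_{L^\infty(B_r)}, \qquad \mu := 2-\epsilon,\]
over the interval $[1/k,1]$. Accordingly, I set
\[\Theta(\rho) := \sup_{\rho\le r\le 1}\theta(r),\]
which is nonincreasing in $\rho$, finite (bounded above by $\rho^{-\mu}$ since $\|w\|_{L^\infty(B_1)}=1$), and satisfies $\Theta(\rho)\ge \nu(\rho)\to\infty$ as $\rho\downarrow 0$ by the standing hypothesis.

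For each $k$ I pick $r_k\in[1/k,1]$ with $\theta(r_k)\ge \tfrac{1}{2}\Theta(1/k)$, which is possible by the definition of the supremum. The lower bound $r_k\ge 1/k$ is then automatic, and combining $\theta(r_k)\le r_k^{-\mu}$ with $\theta(r_k)\ge \tfrac{1}{2}\nu(1/k)$ yields
\[r_k^{\mu}\le \frac{2}{\nu(1/k)},\]
which forces $r_k\to 0$ along a subsequence (which I then extract to obtain monotone decay) and, after absorbing the factor $2$ into the modulus $\nu$, gives $r_k\le \nu(1/k)^{-1/\mu}$. For $k$ large enough that $\nu(1/k)\ge 1$, the same inequality also gives $\theta(r_k)\ge \tfrac{1}{2}$, i.e. the nondegeneracy $\|w\|_{L^\infty(B_{r_k})}\ge \tfrac{1}{2}r_k^\mu$.

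The growth bound on $w_k$ then splits into two regimes. For $|x|\le 1$ one has $|w_k(x)|\le 1\le 2(1+|x|^\mu)$ immediately from the definition of $w_k$. For $1<|x|\le 1/r_k$, setting $r:=r_k|x|\in(r_k,1]$, the near-optimality of $r_k$ gives
\[|w_k(x)|\le \frac{\|w\|_{L^\infty(B_r)}}{\|w\|_{L^\infty(B_{r_k})}} = \frac{\theta(r)}{\theta(r_k)}\left(\frac{r}{r_k}\right)^{\mu} \le \frac{\Theta(r_k)}{\theta(r_k)}\,|x|^{\mu}\le 2|x|^{\mu}\le 2(1+|x|^\mu),\]
using $\Theta(r_k)\le \Theta(1/k)\le 2\theta(r_k)$ in the penultimate step. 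This gives the claim with $C=2$.

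The argument has no serious obstacle; it is the standard ``maximum growth ratio'' extraction that underlies many blow-up arguments. The only mild subtlety is that $r\mapsto\|w\|_{L^\infty(B_r)}$ need not be continuous (upward jumps are possible), so the supremum defining $\Theta(1/k)$ may not be attained; this is precisely why I settle for a near-maximizer at a factor-of-$2$ loss, which is harmlessly absorbed into the modulus $\nu$.
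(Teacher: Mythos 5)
Your proof is correct and follows essentially the same route as the paper: choose $r_k\in[1/k,1]$ as a near-maximizer (within a factor $2$) of the ratio $r^{-\mu}\|w\|_{L^\infty(B_r)}$ over $[1/k,1]$, then use monotonicity of the running supremum to get the growth bound with $C=2$; your nondegeneracy and $r_k\to0$ arguments match the paper's up to the same harmless factor-of-$2$ bookkeeping in the bound $r_k\le(\nu(1/k))^{-1/\mu}$. The only cosmetic difference is notational (your $\theta/\Theta$ split versus the paper's single supremum function), plus your requiring $\nu(1/k)\ge1$ for the nondegeneracy, where one could instead note the supremum is always at least $1$ because $\|w\|_{L^\infty(B_1)}=1$.
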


\begin{proof}
Let
\[\theta(\rho):= \sup_{\rho\leq r\leq 1} r^{-\mu}\|w\|_{L^\infty(B_r)}.\]
By assumption, we have
\[ \theta(\rho)\geq\nu(\rho)\rightarrow \infty\qquad \textrm{as}\quad \rho\downarrow0.\]
Note that $\theta$ is nonincreasing.

Then, for every $k \in \mathbb N$ there is  $r_k\ge \frac1k$ such that
\begin{equation}\label{nondeg-m}
 (r_k)^{-\mu} \|w\|_{L^\infty(B_{r_k})} \ge \frac 12 \theta(1/k)\ge \frac 12 \theta(r_k).
\end{equation}
Note that since $\|w\|_{L^\infty(B_1)}=1$ then
\[(r_k)^{-\mu}\geq \frac12 \theta(1/k)\geq \frac12\nu(1/k),\]
and hence
\[0<1/k \leq r_k\leq (\nu(1/k))^{-1/\mu}.\]
Moreover, we have $\theta(r_k)\geq1$, and thus $\|w\|_{L^\infty(B_{r_k})}\geq \frac12r_k^\mu$.

Finally, by definition of $\theta$ and by \eqref{nondeg-m}, for any $1\leq R\leq 1/r_k$ we have
\[\|w_k\|_{L^\infty(B_R)}=\frac{\|w\|_{L^\infty(B_{r_kR})}}{\|w\|_{L^\infty(B_{r_k})}}
\leq \frac{\theta(r_kR)(r_kR)^\mu}{\frac12(r_k)^\mu \theta(r_k)}\leq 2R^\mu.\]
In the last inequality we used the monotonicity of $\theta$.
\end{proof}

We now give the:

\begin{proof}[Proof of Proposition \ref{prop-rescalings}]
Let $r_k\to0$ be the sequence given by Lemma \ref{simple-lem} (with $\mu=2-\epsilon$).
Then, the functions
\[u_k(x)=\frac{u(r_k x)}{\|u\|_{L^\infty(B_{r_k})}}\]
satisfy
\[|u_k(x)|\leq C(1+|x|^\mu)\quad\textrm{in}\ B_{1/r_k},\]
and
\begin{equation}\label{nondegvk}
\|u_k\|_{L^\infty(B_1)}=1,\qquad u_k(0)=0,\qquad \nabla u_k(0)=0.
\end{equation}
Moreover, they are solutions to the obstacle problem in $B_{1/r_k}$, i.e.,
\[\begin{array}{rclll}
F(D^2u_k)&=&0\,&\textrm{in} &B_{1/r_k}\setminus \{x_n=0\}\\
\min(-F(D^2u_k),\,u_k-\varphi_k)&=& 0\,&\textrm{on} &B_{1/r_k}\cap \{x_n=0\},
\end{array}\]
where
\[ \|\varphi_k\|_{C^{1,1}}=\frac{ \|\varphi(r_k\,\cdot\,)\|_{C^{1,1}}}{\|u_k\|_{L^\infty(B_{r_k})}}\leq \frac{C(r_k)^2}{(r_k)^{2-\epsilon}}=C(r_k)^\epsilon\]
converges to 0 uniformly as $k\to\infty$.
Therefore, by Lemma \ref{lem-compactness} for $k$ large enough (so that $(r_k)^\epsilon\leq (\nu(1/k))^{-1/(2-\epsilon)}\leq \eta$) we have
\[\bigl| v - u_0\bigr| + \bigl|\nabla v - \nabla u_0\bigr| \leq \delta \quad \mbox{in } B_1,\]
for some global convex solution $u_0$ of \eqref{eqnliouville}-\eqref{grad}-\eqref{growth}, with $\|u_0\|_{L^\infty(B_1)}=1$, as desired.
\end{proof}

\section{Lipschitz regularity of the free boundary}
\label{sec6}

We now prove that the free boundary is Lipschitz in a neighborhood of any regular point $x_0$.

\begin{prop}\label{prop-Lip}
Assume that $0$ is a regular free boundary point with exponent $\epsilon$ and modulus $\nu$.
Then, there exists $e\in S^{n-1}\cap \{x_n=0\}$ such that for any $\ell>0$ there exists $r>0$ for which
\[\partial_\tau u\geq0\quad \textrm{in}\ B_r\qquad \textrm{for all}\quad \tau\cdot e\geq \frac{\ell}{\sqrt{1+\ell^2}},\quad \tau\in S^{n-1}\cap \{x_n=0\}.\]
In particular, the free boundary is Lipschitz in $B_r$, with Lipschitz constant $\ell$.

The constant $r$ depends only on $\ell$, $\epsilon$, $\nu$, $n$, $\lambda$, $\Lambda$.
\end{prop}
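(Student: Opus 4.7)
The plan is to use Proposition \ref{prop-rescalings} to bring $u$ close (after rescaling) to a 1D global solution $u_0$, and then propagate the monotonicity of $u_0$ in the direction $e$ to monotonicity of $u$ in the full cone of directions via Lemma \ref{lem-max}.

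Fix $\delta>0$ small (to be chosen) and apply Proposition \ref{prop-rescalings}: there is $r>0$ such that $v(x):=u(rx)/\|u\|_{L^\infty(B_r)}$ is $\delta$-close in $C^1(B_1)$ to some nontrivial convex global solution $u_0$. By Theorem \ref{thmclassif}, $u_0(x)=u_0(x'\cdot e,x_n)$ with contact set $\Sigma^*=\{x'\cdot e\le 0\}\cap\{x_n=0\}$ for some $e\in S^{n-2}$; this $e$ will be the direction in the statement. Convexity of $u_0$ in $x'$ combined with $u_0\ge 0$ on $\{x_n=0\}$ and $u_0|_{\Sigma^*}=0$ yields $\partial_e u_0\ge 0$ on $\{x_n=0\}$; the growth control \eqref{growth} and Lemma \ref{lem-sumablegrowth} (applied in $\{x_n>0\}$ and in $\{x_n<0\}$) extend this to $\partial_e u_0\ge 0$ in $\R^n$. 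The strong maximum principle for Pucci operators then gives $\partial_e u_0>0$ in $\R^n\setminus\Sigma^*$, and compactness of the class of normalized admissible blow-ups (via the $C^{1,\alpha}$ estimates of \cite{Fer} and \eqref{growth}) produces a universal $c_0>0$ with $\partial_e u_0\ge c_0$ on $\overline{B_{3/4}}\cap\{|x_n|\ge c_1\}$ for any prescribed small universal $c_1\in(0,\sqrt{\lambda/(9n\Lambda)})$.

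For $\tau\in S^{n-1}\cap\{x_n=0\}$ with $\tau\cdot e\ge \ell/\sqrt{1+\ell^2}$, independence of $u_0$ on the directions in $\{x_n=0\}$ perpendicular to $e$ gives $\partial_\tau u_0=(\tau\cdot e)\,\partial_e u_0\ge \ell c_0/\sqrt{1+\ell^2}=:2c(\ell)$ on that good set, with $\partial_\tau u_0\ge 0$ throughout $\R^n$. Choosing $\delta<c(\ell)$, $C^1$-closeness gives $\partial_\tau v\ge c(\ell)$ on $\overline{B_{3/4}}\cap\{|x_n|\ge c_1\}$ and $\partial_\tau v\ge -\delta$ in $B_1$. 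The saving point is that after this rescaling the obstacle satisfies $\|\varphi_k\|_{C^{1,1}}=O(r^\epsilon)$ (using the nondegeneracy $\|u\|_{L^\infty(B_r)}\ge c\,r^{2-\epsilon}$ provided by the simple-rescaling Lemma \ref{simple-lem}). Taking a $C^{1,1}$ extension $\bar\varphi_k$ of $\varphi_k$ to $\R^n$ with $\|\bar\varphi_k\|_{C^{1,1}}=O(r^\epsilon)$, I would consider $\tilde w:=\partial_\tau(v-\bar\varphi_k)$: it vanishes on $\Omega^*:=\{v=\varphi_k\}\cap\{x_n=0\}$, differs from $\partial_\tau v$ by $O(r^\epsilon)$ (so the previous positivity/lower bounds transfer to $\tilde w$ with losses of order $r^\epsilon$), and satisfies $M^-\tilde w\le Cr^\epsilon$ off $\Omega^*$, to be interpreted via incremental quotients of $v-\bar\varphi_k$ using the convexity of $F$. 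Applying Lemma \ref{lem-max} with $\sigma=C(r^\epsilon+\delta)$ small then yields $\tilde w\ge 0$ in $B_{1/2}$; scaling back gives $\partial_\tau(u-\bar\varphi)\ge 0$ in $B_{r/2}$, whence the monotonicity of $u-\varphi$ along every admissible $\tau$ in $B_{r/2}\cap\{x_n=0\}$ forces the contact set to be the subgraph of a Lipschitz function with slope $\le\ell$ in the $-e$-direction.

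I expect the main obstacle to be handling the non-zero obstacle: the direct choice $w=\partial_\tau v$ does not vanish on the contact set but only equals the small quantity $\partial_\tau\varphi_k=O(r^\epsilon)$ there, preventing a direct application of Lemma \ref{lem-max}. The repair above---subtracting an extension of the obstacle before differentiating---requires verifying that $\tilde w$ is a genuine viscosity subsolution of $M^-\tilde w\le Cr^\epsilon$ off $\Omega^*$; this is standard for convex $F$ via incremental quotients but is really where the convexity assumption \eqref{F} enters in an essential way. The remaining ingredients (compactness of blow-ups, propagation of monotonicity, rescaling back) are routine.
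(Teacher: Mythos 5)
Your proposal is correct and follows essentially the same route as the paper: rescale via Proposition \ref{prop-rescalings} to get $C^1$-closeness to a 1D convex blow-up $u_0$, deduce a positive lower bound for $\partial_\tau u_0$ away from $\{x_n=0\}$, transfer it to $\partial_\tau(v-\varphi_r)$ (which vanishes on the contact set and is an almost-supersolution with error $O(r^\epsilon)$), and conclude with the maximum principle Lemma \ref{lem-max}. The only deviation is a sub-step: the paper gets the uniform bound $\partial_\tau u_0\geq c_0(\tau\cdot e)$ on $\{|x_n|\geq\eta\}$ directly by an interior Harnack chain in the 2D reduction (Lemma \ref{u0}), whereas you obtain it by compactness of the class of normalized blow-ups; both work, and your handling of the obstacle (subtracting its extension before differentiating) is exactly what the paper does with $w=v-\varphi_r$.
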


To prove this, we need the following.

\begin{lem}\label{u0}
Let $u_0(x)=u_0(x'\cdot e,x_n)$ be a global solution of \eqref{eqnliouville}-\eqref{grad}-\eqref{growth}, with $\|u_0\|_{L^\infty(B_1)}=1$.
Let $\tau\in S^{n-1}\cap \{x_n=0\}$ be such that $\tau\cdot e>0$.

Then, for any given $\eta>0$ we have
\[\partial_\tau u_0\geq c_0(\tau\cdot e)>0\quad \textrm{in}\ \{x'\cdot e\geq\eta>0\}\cap B_2\]
and
\[\partial_\tau u_0\geq c_0(\tau\cdot e)>0\quad \textrm{in}\ \{|x_n|\geq\eta>0\}\cap B_2,\]
with $c_0$ depending only on $\eta$ and ellipticity constants.
\end{lem}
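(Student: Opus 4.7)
The plan is to use the one-dimensional $x'$-structure given by Theorem \ref{thmclassif} and then propagate a quantitative positive value of $\partial_s u_0$ from a reference point by a Harnack chain. By Theorem \ref{thmclassif}, we may regard $u_0$ as a $2$D function $u_0 = U(s,t)$ with $s := x'\cdot e$ and $t := x_n$, with $U$ convex in $s$ and vanishing on the contact half-line $\{s\le 0,\, t=0\}$. Since $\partial_\tau u_0 = (\tau\cdot e)\,\partial_s U$ and $\tau\cdot e>0$, the two claims reduce to showing
\[ \partial_s U \;\ge\; c_0(\eta) \;>\; 0 \qquad \mbox{in } \{s\ge\eta\}\cap B_2 \mbox{ and in } \{|t|\ge\eta\}\cap B_2.\]
Since $\|u_0\|_{L^\infty(B_1)}=1$, we may assume $u_0\not\equiv 0$.

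First I would prove that $v := \partial_s U \ge 0$ in all of $\R^n$. On $\{t=0\}$ this is immediate: $v=0$ on $\{s\le 0\}$ (from $U\equiv 0$ there), and the monotonicity of $v$ in $s$ coming from the convexity of $U$ gives $v\ge 0$ for $s>0$ as well. In each half-space $\{\pm t>0\}$, the convexity of $F$ gives $M^+v\ge 0$; combining \eqref{growth} with the rescaled interior $C^{1,\alpha}$ estimates of \cite{Fer} yields $\|v\|_{L^\infty(B_R)}\le CR^{1-\epsilon}$, which is summable in the sense required by Lemma \ref{lem-sumablegrowth}. Applied to $-v$ with nonpositive boundary datum on $\{t=0\}$, that lemma gives $v\ge 0$ globally.

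Next I would produce a quantitative reference value for $v$ at the point $p_*\in\R^n$ corresponding to $(s,t)=(\eta,0)$. Since $\|u_0\|_{L^\infty(B_1)}=1$, there exists $p_0\in\overline{B_1}$ with $u_0(p_0)=1$; uniform $C^{1,\alpha}$ regularity up to the thin obstacle \cite{Fer} combined with $u_0=0$ on the contact ray forces $\mathrm{dist}(p_0,\mathrm{contact})\ge d_0$ for a universal $d_0>0$, so $p_0$ lies well inside the connected open set $\Omega:=\R^n\setminus(\{s\le 0\}\cap\{t=0\})$, in which $u_0$ satisfies the Pucci inequalities $M^-u_0\le 0\le M^+u_0$. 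A finite Harnack chain inside $\Omega$ joining $p_0$ to $p_*$ through balls kept at distance $\gtrsim \min(d_0,\eta)$ from the contact ray gives $u_0(p_*)\ge c_1(\eta)>0$. The chord-slope inequality from convexity of $U$ in $s$, together with $U(0,0)=0$, then yields
\[v(p_*) \;\ge\; \frac{U(\eta,0)-U(0,0)}{\eta} \;\ge\; \frac{c_1(\eta)}{\eta} \;>\;0.\]

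Finally, to reach arbitrary points of the two target regions I would run a second Harnack chain, this time for $v$ itself. Both target sets are compact subsets of $\Omega$ lying at distance $\ge\eta$ from the contact ray, hence reachable from $p_*$ by a uniform finite chain of balls contained in $\Omega$. Since $v\ge 0$ and satisfies $M^-v\le 0\le M^+v$ in $\Omega$, Krylov--Safonov Harnack (see \cite{CC}) applied along the chain yields $v\ge c_0(\eta)\,v(p_*)\ge c_0(\eta)>0$ on both regions; multiplying by $\tau\cdot e$ gives the lemma. The main technical hurdle is the summable-growth check for $v$ in the first step, so that Lemma \ref{lem-sumablegrowth} can be invoked on each half-space; once that is verified, the $2$D reduction and the two Harnack chains are routine.
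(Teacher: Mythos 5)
Your reduction to the $2$D picture, the proof that $v=\partial_s U\ge 0$ in all of $\R^n$ (convexity on the thin space plus the sublinear growth $CR^{1-\epsilon}$ and Lemma \ref{lem-sumablegrowth}), and the final Harnack chain for $v$ inside the slit domain are all sound and essentially the route the paper takes (the paper's proof is exactly: reduce to $n=2$, use $\partial_{x_1}u_0\ge 0$ globally, and run interior Harnack for the derivative). The gap is in your middle step, where you try to turn the normalization $\|u_0\|_{L^\infty(B_1)}=1$ into the quantitative value $u_0(p_*)\ge c_1(\eta)$ by a Harnack chain applied to $u_0$ \emph{itself}. The Harnack inequality requires $u_0\ge 0$ on the balls of the chain, but \eqref{eqnliouville} only gives $u_0\ge 0$ on $\{x_n=0\}$, and global solutions are in general \emph{not} nonnegative off the thin space: already for $F=\Delta$ the model blow-up $\mathrm{Re}\bigl((x'\cdot e)+i|x_n|\bigr)^{3/2}$ is strictly negative in the cone of polar angles between $2\pi/3$ and $\pi$ around the contact ray, a region that reaches out to infinity, so ``keeping the chain at distance $\gtrsim\min(d_0,\eta)$ from the contact ray'' does not place you in a region of known nonnegativity. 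A related flaw: $\|u_0\|_{L^\infty(B_1)}=1$ is a bound on $|u_0|$, so your starting point $p_0$ with $u_0(p_0)=+1$ need not exist (in the Laplacian model the infimum $-1$ is attained in $\overline{B_1}$ as well); nothing in your argument rules out that the normalization is realized only by negative values.

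Because of this, the key quantitative input --- a positive lower bound for $v$ at one reference point, depending only on $\eta$ and the ellipticity constants --- is not established. A way to close the gap without any sign information on $u_0$ is a compactness argument: if the bound failed, one would have normalized global solutions $u_0^k$ (for operators $F_k$ with the same ellipticity constants) with $\inf v_k\to 0$ on the region in question; by the $C^{1,\alpha}$ estimates and stability one extracts a limit $u_0^\infty$, still satisfying \eqref{eqnliouville}--\eqref{grad}--\eqref{growth} with $\|u_0^\infty\|_{L^\infty(B_1)}=1$, for which $\partial_s u_0^\infty\ge0$ vanishes at an interior point of the slit domain; the strong maximum principle forces $\partial_s u_0^\infty\equiv 0$, so $u_0^\infty$ depends only on $x_n$, hence is linear on each side of $\{x_n=0\}$, and \eqref{grad} together with the supersolution condition across $\{x_n=0\}$ forces $u_0^\infty\equiv 0$, contradicting the normalization. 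With that replacement (or any other argument producing $v\ge c(\eta)$ at one point), the remainder of your proof goes through and coincides with the paper's.
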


\begin{proof}
Since $u_0(x)=u_0(x'\cdot e,x_n)$ it suffices to show the result in dimension $n=2$.
In that case, we have $F(D^2u_0)=0$ in $\R^2\setminus\{x_1\leq0\}$, and satisfies $\partial_{x_1x_1}u_0\geq0$, $\partial_{x_1}u_0\geq0$ in $\R^2$.
Then, by the interior Harnack inequality, and using $\|u_0\|_{L^\infty(B_1)}=1$, it follows that
\[\partial_{x_1} u_0\geq c>0\quad \textrm{in}\ \{x_1\geq\eta>0\}\cap B_2\]
and
\[\partial_{x_1} u_0\geq c>0\quad \textrm{in}\ \{|x_2|\geq\eta>0\}\cap B_2,\]
as desired.
\end{proof}

We can now give the:

\begin{proof}[Proof of Proposition \ref{prop-Lip}]
Let $r>0$ be as in the proof of Proposition \ref{prop-rescalings}, and
\[v(x)=\frac{u(rx)}{\|u\|_{L^\infty(B_r)}}.\]
Then, $v$ satisfies
\[F(D^2v)=0\quad \textrm{in}\quad B_2\setminus \{x_n=0\},\]
\[\min\bigl(-F(D^2v),\, v-\varphi_r\bigr)=0\quad \textrm{on}\quad B_2\cap\{x_n=0\}.\]
Moreover, $\|\varphi_r\|_{C^2(B_1)}\leq Cr^\epsilon$.

Thus, the function
\[w=v-\varphi_r\]
solves $F(D^2w+D^2\varphi_r)=0$ in $B_2\cap \{x_n>0\}$, and $\min(-F(D^2w),\, w)=0$ on $B_2\cap\{x_n=0\}$.
Therefore, any derivative $\partial_\tau w$, with $\tau\in S^{n-1}\cap \{x_n=0\}$, satisfies
\[M^+(\partial_\tau w)\geq-Cr^\epsilon\quad\textrm{and}\quad M^-(\partial_\tau w)\leq Cr^\epsilon\quad \textrm{in}\quad B_2\setminus\Omega^*,\]
where $\Omega^*:=\{w=0\}\cap \{x_n=0\}\cap B_2$.
Moreover, we have
\[\partial_\tau w=0\quad \textrm{on}\quad \Omega^*.\]

Now, notice that by Proposition \ref{prop-rescalings}, for any given $\delta>0$ we may choose $r>0$ small enough so that $|\partial_\tau w-\partial_\tau u_0|\leq \delta$, where $u_0$ is a global solution of \eqref{eqnliouville}-\eqref{grad}-\eqref{growth}.
By Lemma \ref{u0}, we find
\[\partial_\tau w\geq c_0(\tau\cdot e)-\delta\qquad \textrm{in}\quad\big( \{x'\cdot e\geq\eta\}\cup \{|x_n|\geq\eta\}\big) \cap B_2.\]
Now, choosing $\delta$ small enough (depending on $\ell$), this gives
\[\partial_\tau w\geq \tilde c_0\qquad \textrm{in}\quad \big(\{x'\cdot e\geq\eta\}\cup \{|x_n|\geq\eta\}\big) \cap B_2,\]
for all $\tau\in S^{n-1}\cap\{x_n=0\}$ such that $\tau\cdot e\geq \ell/\sqrt{1+\ell^2}$.
Finally, using Lemma \ref{lem-max} (applied to $\partial_\tau w$) we obtain
\[\partial_\tau w\geq0\quad\textrm{in}\quad B_1,\]
as desired.
\end{proof}

\section{The regular set is open and $C^1$}
\label{sec7}

In this Section, we finally prove Theorem \ref{thm1}.
By Proposition \ref{prop-Lip}, we know that if $x_0$ is a regular point, then the free boundary is $C^1$ at $x_0$.
We next prove that the regular set is open, and this will yield Theorem \ref{thm1}.

In this section $\epsilon_0$ denotes a fixed constant in $(0,1-\beta_2)$, where $\beta_2$ is ``subsolution''  exponent given by Proposition \ref{prop-1D}. For instance, we may fix $\epsilon_0 = \frac12(1-\beta_2)$, a constant depending only on the ellipticity constants (and thus universal).

\begin{prop}\label{contagi}
Assume $0$ is a regular free boundary point with exponent $\epsilon$ and modulus $\nu$.
Then, there is $e\in S^{n-1}\cap \{x_n=0\}$ and there is $r>0$ such that for any free boundary point $x_0\in \partial\{u=\varphi\}\cap \{x_n=0\}\cap B_r$ we have
\[(u-\varphi)(x_0+t e)\geq ct^{2-\epsilon_0}\qquad \textrm{for all}\quad t\in (0,r/2).\]
The constant $c>0$ depends only on $n$, $\epsilon$, $\nu$, and ellipticity constants.
In particular, every free boundary point in $B_r$ is regular, with a uniform exponent $\epsilon=\epsilon_0/2$ and a uniform modulus $\tilde \nu = \tilde\nu(t) = c t^{\epsilon-\epsilon_0}$.
\end{prop}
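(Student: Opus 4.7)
The plan is to prove the nondegeneracy $(u-\varphi)(x_0+te)\ge c\,t^{2-\epsilon_0}$ via a sub-solution barrier placed below $u-\varphi$ at every free boundary point $x_0$ in a small neighborhood of $0$. The crucial algebraic input is the strict gap $2-\epsilon_0>1+\beta_2$ built into the choice $\epsilon_0<1-\beta_2$; it ensures that a 1D-like barrier of homogeneity $2-\epsilon_0\in(1,2)$ has a positive eigenvalue of $D^2$ in one direction while being only H\"older along the slit, hence is a sub-solution of $M^-$ up to a controllable quadratic perturbation.

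First, apply Proposition \ref{prop-rescalings} at $0$ to find a scale $r_0$ at which $v(x)=u(r_0x)/\|u\|_{L^\infty(B_{r_0})}$ is $\delta$-close in $C^1(\overline{B_1})$ to a 1D global convex solution $u_0(x)=U(x'\cdot e,x_n)$ with $\|u_0\|_{L^\infty(B_1)}=1$ (the 1D form comes from Theorem \ref{thmclassif}). By Lemma \ref{u0}, $u_0\ge c_*>0$ on a fixed \emph{good set} $A\subset B_1$ bounded away from the slit $\{x'\cdot e\le 0\}\cap\{x_n=0\}$. By selecting $r_0$ small enough (using the modulus $\nu$) that $\|u\|_{L^\infty(B_{r_0})}\ge C_0\, r_0^{2-\epsilon_0}$ for a large $C_0$, the $C^1$-closeness and the fact that the obstacle contributes only $O(r_0^2)\ll r_0^{2-\epsilon_0}$ yield $u-\varphi\ge c_1\, r_0^{2-\epsilon_0}$ on the translated set $r_0A$. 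Next, Proposition \ref{prop-Lip} (with a small Lipschitz constant $\ell$ to be fixed universally) produces $\rho\ll r_0$ such that in $B_\rho$ the free boundary is a Lipschitz graph transverse to $e$ and $u$ is monotone nondecreasing in every direction in a cone around $e$. In particular, for each free boundary point $x_0\in B_\rho\cap\{x_n=0\}$ one can choose $e_{x_0}$ with $|e_{x_0}-e|\le C\ell$ so that the contact set near $x_0$ lies in $\{(x-x_0)\cdot e_{x_0}\le 0\}\cap\{x_n=0\}$, and so that the shifted good set $x_0+(r_0/2)A$ stays inside the positivity region.

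For each such $x_0$ consider the barrier $B_{x_0}(x)=c_1\bigl[((x-x_0)\cdot e_{x_0})_+\bigr]^{2-\epsilon_0}+c_2\,|x-x_0|^2$. A direct computation shows that $D^2 B_{x_0}$ has one nonnegative eigenvalue of size $(2-\epsilon_0)(1-\epsilon_0)\,\xi^{-\epsilon_0}$ along $e_{x_0}$ (with $\xi=((x-x_0)\cdot e_{x_0})_+$) plus the positive $2c_2 I$ from the quadratic, whence $M^-B_{x_0}\ge 2\lambda n\, c_2$. Fixing $c_2\ge\|\varphi\|_{C^{1,1}}/(2\lambda n)$ makes $B_{x_0}$ a global $M^-$-subsolution that dominates the obstacle perturbation. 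To compare with $u$, subtract from $u$ the first-order tangential Taylor expansion $L_{x_0}$ of $\varphi$ at $x_0$; then $\tilde u=u-L_{x_0}$ satisfies $\tilde u(x_0)=0$, is still a solution of $F(D^2\tilde u)=0$ outside the contact set (hence an $M^-$-supersolution), and $|\tilde u-(u-\varphi)|=O(|x-x_0|^2)$. The Pucci comparison principle on $B_{r_0/2}(x_0)$, together with the boundary inequality $\tilde u\ge B_{x_0}$ (checked below), yields $\tilde u\ge B_{x_0}$ in $B_{r_0/2}(x_0)$. Evaluating at $x_0+te$ and using $|e-e_{x_0}|\le C\ell$ to keep $((te)\cdot e_{x_0})_+\ge t/2$ gives $(u-\varphi)(x_0+te)\ge c\,t^{2-\epsilon_0}$ for $t\in(0,r_0/2)$.

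The main obstacle is verifying the boundary inequality $\tilde u\ge B_{x_0}$ on $\partial B_{r_0/2}(x_0)$. On the part intersecting the shifted good set, this follows from $u-\varphi\ge c_1 r_0^{2-\epsilon_0}$ (Step 1) and smallness of $L_{x_0}$ and $c_2$. On the thin part $\{x_n=0\}\cap\partial B_{r_0/2}(x_0)$ near or on the contact set, the containment $\{(x-x_0)\cdot e_{x_0}\le 0\}\supset$ contact set (Step 2) forces $B_{x_0}$ to be purely quadratic there, of order $c_2|x-x_0|^2$; meanwhile $\tilde u$ vanishes to order $|x-x_0|^{1+\alpha}$ on the contact set by $C^{1,\alpha}$ regularity, and the resulting small mismatch is propagated to the relevant boundary piece via Lemma \ref{lem-max}. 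Adjusting $c_1,c_2$ in the right order (first $c_2$ by the obstacle, then $c_1\ll c_2$ by Step 1) closes the argument. The gap $2-\epsilon_0>1+\beta_2$ is precisely what makes this barrier compatible with the Pucci sub-solution property and with the target growth rate.
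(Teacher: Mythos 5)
Your argument has two genuine gaps, both located exactly at the comparison step that is supposed to produce the lower bound. First, the containment claim in your Step 2 is false in dimensions $n\ge 3$: knowing only that the free boundary is Lipschitz with small constant $\ell>0$ through $x_0$ (Proposition \ref{prop-Lip}), the contact set near $x_0$ is contained in the complement of a cone, roughly $\{(x-x_0)\cdot e\le \ell\,|x-x_0|\}\cap\{x_n=0\}$, but in general in \emph{no} half-space $\{(x-x_0)\cdot e_{x_0}\le 0\}$: writing $x'=(x_1,x'')$ and taking the admissible graph $g(x'')=\ell|x''|$, the contact set contains both $(\ell,1,0,\dots)$ and $(\ell,-1,0,\dots)$, and no unit vector $e_{x_0}$ close to $e_1$ has nonpositive inner product with both. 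Consequently your barrier $c_1\bigl[((x-x_0)\cdot e_{x_0})_+\bigr]^{2-\epsilon_0}$ is strictly positive on part of the contact set, where $u-\varphi=0$, so the boundary inequality required for the Pucci comparison on $B\setminus\Omega^*$ fails there; this is precisely why the paper's barrier $\Phi_{\rm sub}$ of Lemma \ref{homog-subsol} is built, via $E^-$ and the tilted function $\psi_{\rm sub}$, to vanish on the complement of the cone $\mathcal C^*_\eta$ rather than on a half-space. Second, even granting the half-space containment, the quadratic term $c_2|x-x_0|^2$ makes $B_{x_0}>0$ on all of $\Omega^*\setminus\{x_0\}$, while on $\Omega^*$ one only has $\tilde u=\varphi-L_{x_0}=O(|x-x_0|^2)$ with no sign; so $\tilde u\ge B_{x_0}$ fails on the contact set, which is part of the boundary of your comparison domain. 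Lemma \ref{lem-max} cannot repair this: it requires $v=0$ exactly on $\Omega^*$ (it is tailored to tangential derivatives, which do vanish there), and any version tolerating a defect of size $O(r^2)$ on $\Omega^*$ can at best yield a bound of the form $B_{x_0}-Cr^2$, which is vacuous as $t\to0$ since $c_1t^{2-\epsilon_0}\ll r^2$ there --- and small $t$ is the whole content of the proposition.

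These are not repairable details but a missing mechanism: note that the hypothesis $\epsilon_0<1-\beta_2$ never genuinely enters your proof (any power in $(1,2)$ of a linear function is convex, hence an $M^-$-subsolution), which signals that the structure the exponent encodes has been bypassed. The paper instead works with the tangential derivative $\partial_e w$ of the rescaled solution: it vanishes identically on the contact set (so the delicate boundary comparison on $\Omega^*$ is trivial), it is nonnegative by Proposition \ref{prop-Lip} and bounded below on a good interior region, and it is compared with the homogeneous degree-$(\beta_2+\gamma)=(1-\epsilon_0)$ subsolution $\Phi_{\rm sub}$ whose admissibility across the slit inside the cone is exactly where $\beta_2$ enters, through the sign of $\partial_{x_n}\Phi_{\rm sub}$ in Lemma \ref{homog-subsol} (via Proposition \ref{prop-1D}); the resulting bound $\partial_e w(te)\ge c\,t^{1-\epsilon_0}$ is then integrated in $t$ to give $(u-\varphi)(x_0+te)\ge c\,t^{2-\epsilon_0}$. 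Any direct barrier under $u-\varphi$ would have to reproduce both features --- vanishing on a cone complement containing the contact set, and vanishing there to high enough order to avoid any mismatch --- which effectively forces the derivative/extension construction of the paper.
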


To prove Proposition \ref{contagi}, we need the following Lemma. Recall that $x'$  denote points in $\R^{n-1}$ and the extension operators $E^+$ and $E^-$ were defined in Proposition~\ref{prop-extensions}.

\begin{lem}\label{homog-subsol}
Let  $e$ be a unit vector in $\R^{n-1}\times\{0\}$, and $0<\beta_1<\frac12<\beta_2<1$ the exponents in Corollary~\ref{cor-1D}.
Define
\[\psi_{\rm sub}(x') :=e\cdot x'- \eta |x'| \left(1- \frac{(e\cdot x')^2}{|x'|^2} \right)\]
\[\psi_{\rm super}(x') :=  e\cdot x' + \eta |x'| \left(1- \frac{(e\cdot x')^2}{|x'|^2} \right),\]
\[\Phi_{\rm sub} := E^-\left[  \left( \psi_{\rm sub }\right)_+^{\beta_2+\gamma} \right] \quad\mbox{and}\quad \Phi_{\rm super} := E^+\left[ \left(\psi_{\rm super }\right)_+^{\beta_1-\gamma}\right].\]

For every $\gamma\in (0, \min\{|\beta_1-0|,|\beta_2-1| \})$ there is $\eta>0$ such that two functions $\Phi_{\rm sub}$ and $\Phi_{\rm super}$ satisfy
\[\begin{cases}
M^- \Phi_{\rm sub} =  0 \quad & \mbox{in }\{x_n>0\}  \\
\partial_{x_n} \Phi_{\rm sub}\ge  c_\gamma d^{\beta_2+\gamma-1}>0 \quad & \mbox{on }\{x_n=0\}\cap\mathcal C_\eta^*  \\
\Phi_{\rm sub} = 0  \quad & \mbox{on }  \{x_n=0\}\setminus \mathcal C_\eta^*
\end{cases}\]
and
\[\begin{cases}
M^+ \Phi_{\rm super} = 0 \quad & \mbox{in }\{x_n>0\}\\
\partial_{x_n} \Phi_{\rm super} \le -c_\gamma d^{\beta_2+\gamma-1}<0 \quad & \mbox{on }\{x_n=0\}\cap \mathcal C_{-\eta}^*  \\
\Phi_{\rm super} = 0  \quad & \mbox{on }\{x_n=0\} \setminus \mathcal C_{-\eta}^*
\end{cases}\]
where $\mathcal C_{\pm\eta}^*$ is the cone
\begin{equation}\label{cone}
\mathcal C_{\pm\eta}^*: = \left\{ (x',0) \in \R^n\ :  e\cdot \frac{x'}{|x'|}    >  \pm \eta \left( 1 -  \left( e\cdot \frac{ x' }{|x'|} \right)^2\right) \right\},
\end{equation}
and $d$ is the distance to $\mathcal C_{\pm\eta}^*$.
The constants $c_\gamma$ and $\eta$ depend only on $\gamma$, $s$, ellipticity constants, and dimension.
\end{lem}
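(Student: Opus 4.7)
My plan is to take the explicit formulas for $\Phi_{\rm sub}$ and $\Phi_{\rm super}$ and verify each listed property, the heart of the argument being the normal derivative bounds, which I would derive by comparison with the $1$D profiles $\varphi^\pm_\beta$ of Proposition~\ref{prop-1D}. Well-posedness of the two extensions is immediate from Proposition~\ref{prop-extensions}: since $\psi_{\rm sub}$ and $\psi_{\rm super}$ are $1$-homogeneous and the exponents $\beta_2+\gamma$ and $\beta_1-\gamma$ both lie in $(0,1)$, the boundary data satisfy the required summability. The scaling of $M^\pm$ and uniqueness of the extensions (as in the proof of Proposition~\ref{prop-1D}(a)) give homogeneity of degree $\beta_2+\gamma$ for $\Phi_{\rm sub}$ and $\beta_1-\gamma$ for $\Phi_{\rm super}$; the equations $M^-\Phi_{\rm sub}=0$ and $M^+\Phi_{\rm super}=0$ in $\{x_n>0\}$ hold by construction, and the vanishing on $\{x_n=0\}\setminus\mathcal{C}^*_{\pm\eta}$ follows from $\psi_{\rm sub}\le 0$ outside $\mathcal{C}^*_\eta$ and $\psi_{\rm super}\le 0$ outside $\mathcal{C}^*_{-\eta}$.

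For the normal derivative bound on $\Phi_{\rm sub}$ (the argument for $\Phi_{\rm super}$ being entirely parallel, using $\varphi^+_{\beta_1-\gamma}$ and the sign change $\overline C(\beta_1-\gamma)<0$), I would compare $\Phi_{\rm sub}$ from below with the auxiliary function
\[
\chi(x):=\varphi^-_{\beta_2+\gamma}\bigl(\psi_{\rm sub}(x'),\,x_n\bigr),
\]
which is $(\beta_2+\gamma)$-homogeneous and agrees with $\Phi_{\rm sub}$ on $\{x_n=0\}$. The key claim is that, for $\eta$ sufficiently small depending on $\gamma$ and ellipticity, $\chi$ is a viscosity subsolution $M^-\chi\ge 0$ in $\{x_n>0\}$. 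Granting this, Lemma~\ref{lem-sumablegrowth} applied to $\Phi_{\rm sub}-\chi$ gives $\chi\le\Phi_{\rm sub}$ in $\{x_n\ge 0\}$, and comparing the one-sided $x_n$-derivatives on $\{x_n=0\}\cap\mathcal{C}^*_\eta$ yields
\[
\partial_{x_n}\Phi_{\rm sub}(x',0)\ge \partial_{x_n}\chi(x',0)=\underline C(\beta_2+\gamma)\,\bigl(\psi_{\rm sub}(x')\bigr)^{\beta_2+\gamma-1}.
\]
The constant $\underline C(\beta_2+\gamma)$ is strictly positive by Proposition~\ref{prop-1D}(b) and the uniqueness of the zero of $\underline C$, and the $1$-homogeneous representation $\psi_{\rm sub}(x')=|x'|H(\omega)$ with $H(\omega)=e\cdot\omega-\eta(1-(e\cdot\omega)^2)$ vanishing non-degenerately on $\partial\mathcal{C}^*_\eta$ away from the apex gives $\psi_{\rm sub}\ge c_\eta\,d$, producing the claimed $c_\gamma\,d^{\beta_2+\gamma-1}$ bound.

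The decisive step is therefore to verify $M^-\chi\ge 0$, which I expect to be the main obstacle. Rotating coordinates at a point with $\psi_{\rm sub}>0$ and $x_n>0$ so that $\nabla\psi_{\rm sub}$ is aligned with $e_1$, the Hessian of $\chi$ decomposes as a $2$D block in the $(e_1,e_n)$-plane---whose $M^-$ vanishes by definition of $\varphi^-_{\beta_2+\gamma}$---plus a correction of the form $f_u(\psi_{\rm sub},x_n)\,\widetilde{D^2\psi_{\rm sub}}$ supported in the remaining $x'$ directions. Here $f_u\ge 0$ by translation-monotonicity of $\varphi^-_{\beta_2+\gamma}$ in its first argument, while a direct computation shows that the angular eigenvalues of $D^2\psi_{\rm sub}$ are of size $O(\eta/|x'|)$ with signs that depend on the angular position, so they are not uniformly nonnegative and the straightforward subadditivity $M^-(X+Y)\ge M^-(X)+M^-(Y)$ is insufficient. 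The delicate bookkeeping is to exploit that the $O(\eta)$ correction acts largely in directions where the $2$D block has zero eigenvalue, which leaves flexibility in the coefficient matrix attaining $M^-$; combined with the quantitative slack $\underline C(\beta_2+\gamma)>\underline C(\beta_2)=0$ coming from the strict inequality $\beta_2+\gamma>\beta_2$, one chooses $\eta$ small enough (depending on $\gamma$) so that the correction is absorbed and $M^-\chi\ge 0$ holds pointwise. Carrying out this perturbation analysis in detail is the technical core of the proof.
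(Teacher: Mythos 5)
Your reduction of the lemma to the single claim $M^-\chi\ge 0$ (with $\chi=\varphi^-_{\beta_2+\gamma}(\psi_{\rm sub}(x'),x_n)$) is a reasonable strategy, and the surrounding steps (well-posedness and homogeneity of the extensions, vanishing outside the cone, the maximum principle of Lemma \ref{lem-sumablegrowth} applied to $\Phi_{\rm sub}-\chi$, the chain-rule identity $\partial_{x_n}\chi=\underline C(\beta_2+\gamma)\,\psi_{\rm sub}^{\beta_2+\gamma-1}$, the sign of $\underline C(\beta_2+\gamma)$, and $\psi_{\rm sub}\gtrsim d$) are fine. But the central claim is exactly what you do not prove, and the mechanism you invoke to close it does not exist. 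The quantity $\underline C(\beta_2+\gamma)>0$ is a statement about the boundary normal derivative of the $1$D profile; it gives no interior slack in the equation, since $M^-\varphi^-_\beta\equiv 0$ \emph{exactly} for every $\beta$. Writing $D^2\chi=X+Y$ with $X$ the embedded $2$D block and $Y$ the correction containing $f_u\,D^2\psi_{\rm sub}$ (plus the $|\nabla\psi_{\rm sub}|^2-1$ errors inside the block), one has $M^-(X)=0$ with the infimum attained by a matrix that is unconstrained on the angular directions; an adversarial admissible matrix can agree with the minimizer on the block (contributing $0$) and put weight $\Lambda$ on an angular direction $v\perp x',\,v\perp e$, where a direct computation gives $\partial_{vv}\psi_{\rm sub}=-\eta(1+\cos^2\theta)/|x'|<0$ while $X$ contributes nothing. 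Hence $M^-\chi$ can be pushed strictly negative by terms of order $\eta\,f_u/|x'|$ (and order $\eta\,|f_{uu}|$ from the in-block errors), and there is no positive term of comparable size to absorb them, no matter how small $\eta$ is chosen: smallness of $\eta$ makes the perturbation small \emph{relative to} $\|X\|$, but $M^-(X)=0$, so relative smallness is not enough. So either the composed barrier fails to be a subsolution pointwise, or proving that it is one requires a genuinely different (and missing) argument; as written, the technical core of your proof is absent.

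For comparison, the paper avoids any composed barrier. It notes that $\Phi_{\rm sub}$ is homogeneous of degree $\beta_2+\gamma$, so it suffices to bound $\partial_{x_n}\Phi_{\rm sub}$ from below at points $e+P$ with $P\in\partial\mathcal C^*_\eta$. Translating the origin to such a point, the boundary datum becomes $\bigl(1+e\cdot x'-\eta\,\psi_P(x')\bigr)_+^{\beta_2+\gamma}$ with $\psi_P(0)=0$ and $\psi_P$ having bounds in $C^{1,1}(B^*_{1/2})$ uniform in $P$; as $\eta\to0$ these data converge, uniformly in $P$ and with the required growth control, to the $1$D datum $(1+e\cdot x')_+^{\beta_2+\gamma}$. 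The stability result for normal derivatives of extensions (Lemma \ref{aux}) then gives $\partial_{x_n}\Phi_{\rm sub}(e+P)\ge \tfrac12\,\underline C(\beta_2+\gamma)>0$ for $\eta$ small, and homogeneity converts this into the stated $c_\gamma d^{\beta_2+\gamma-1}$ bound. In other words, the positivity of $\underline C(\beta_2+\gamma)$ is exploited through a compactness/stability argument at the boundary, not through a pointwise differential inequality for a composed function; this is the step you would need to replace your unproved claim with.
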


\begin{proof}[Proof of Lemma \ref{homog-subsol}]
We prove the statement for $\Phi_{\rm sub}$.
The statement for $\Phi_{\rm super}$ is proved similarly.

Let us denote $\psi =\psi_{\rm sub}$  and $\Phi = \Phi_{\rm sub}$. Note that $\Phi$ is the $E^-$ extension of a homogeneous function of degree $\beta_2+\gamma$ and thus by uniqueness of the extension (among functions with subcritical growth) it will be homogeneous with the same exponent.

By definition we have $M^- \Phi = 0$ in $\{x_n>0\}$ and  $\Phi = 0$  on $\{x_n=0\} \setminus \mathcal C_{\eta}^*$ since $\psi<0$ on that set.

We thus only need to check that, for $\eta>0$ small enough
\[
\partial_{x_n} \Phi\ge  0 \quad  \mbox{on }\{x_n=0\}
\]

By homogeneity, it  is enough to prove that $\partial_{x_n} \Phi \ge 0$ on points belonging to $e + \partial \mathcal{  C}_\eta^*$, since all the positive dilations of this set with respect to the origin cover the interior of $\mathcal{ C}_\eta^*$.

Let thus $P\in  \partial \mathcal{ C}_\eta^*$, that is,
\[ e\cdot P= \eta \left( |P| -  \frac{(e\cdot P)^2}{|P|} \right) .\]

We note that ---recall that both $P,e\in \{x_n=0\}$
\[\begin{split}
 \psi(P+e+x') &=
 e\cdot (P+e+x')- \eta \left( |P+e+x'| -  \frac{(e\cdot (P+e+x'))^2}{|P+e+x'|} \right)
\\
&=  1 +e\cdot x- \eta \left( |P+e+x| -|P|-  \frac{(e\cdot (P+e+x))^2}{|P+e+x|} +\frac{(e\cdot P)^2}{|P|} \right)\\
&=   1 +e\cdot x'- \eta \psi_P(x')
\end{split}
\]
\[\psi_P(x')  :=  |P+e+x'| -|P|-  \frac{(e\cdot (P+e+x'))^2}{|P+e+x'|} +\frac{(e\cdot P)^2}{|P|} .\]

Then we define
\[\begin{split}
\Phi_{P,\eta}(x) &:= \Phi(P+e+x)
 \\
&=E^-\left[  (x',0)\mapsto  \bigl( 1 +e\cdot x'- \eta \psi_P(x')  \bigr)_+^{\beta_2+\gamma}\right](x),
\end{split}\]
where

Note that the functions $\psi_P$  satisfy
\[\psi_P(0) = 0,\]
\[ |\nabla \psi_P(x')|  \le  C  \quad \mbox{in } \R^n \setminus \{ -P-e\},  \]
and
\[ |D^2 \psi_P(x')|  \le  C  \quad \mbox{for  }x' \in B_{1/2}^*, \]
where $C$ does not depend on $P$ (recall that $|e|=1$).

Then, the (traces of) the family $\Phi_{P,\eta}$ satisfy
\[ \Phi_{P,\eta} \rightarrow  (1+e\cdot x')_+^{\beta_2+\gamma} \quad \mbox{in }C^2(\overline{B_{1/2}^*})\]
as $\eta\searrow 0$, uniformly in $P$.

Moreover,
\[ \bigl|\Phi_{P,\eta}- (1+e\cdot x')_+^{\beta_2+\gamma}  \bigr|  \le  (C \eta |x'|) ^{\beta_2+\gamma} \]
with $C$ independent of $P$.

Thus, since $\beta_2+\gamma<1$,  Lemma \ref{aux} implies
\[ \partial_{x_n} \Phi_{P,\eta}(0)  \rightarrow \partial_{x_n} E^-\left[(x',0)\mapsto(1+e\cdot x' )_+^{s+\gamma}\right](0) = c(s,\gamma,\lambda) >0 ,\]
uniformly in $P$ as $\eta\searrow 0$.

In particular one can chose $\eta = \eta(\gamma,\lambda,\Lambda)$ so that $\partial_{x_n} \Phi_{P,\eta}(0) \ge c(s,\gamma,\lambda)>0$ for all $P\in \partial \mathcal { C}_\eta^*$ and the lemma is proved.
\end{proof}

We can now show Proposition \ref{contagi}.

\begin{proof}[Proof of Proposition \ref{contagi}]
We want to show that  there is $e\in S^{n-1}\cap \{x_n=0\}$ and there is $r>0$ such that for any free boundary point $x_0\in \partial\{u=\varphi\}\cap \{x_n=0\}\cap B_r$ we have
\begin{equation}\label{ourgoal}
(u-\varphi)(x_0+t e)\geq ct^{2-\epsilon_0}\qquad \textrm{for all}\quad t\in (0,r/2).
\end{equation}
This will follow using the subsolitions of Proposition \ref{homog-subsol} and Lemma \ref{lemsubsol}, from a inspection of the Proof of Proposition \eqref{prop-Lip}.
Recall that in all the paper $\epsilon_0$ denotes some constant in $(0,1-\beta_2)$.

Indeed, given $\eta>0$ by Proposition \eqref{prop-Lip} we find $r>0$ such that, for every $x_0 \in \partial\{u=\varphi\}\cap \{x_n=0\}\cap B_{r}$
\begin{equation}\label{conseqProp}
u>\varphi \quad \mbox{on}\quad  B^*_{2r} \cap(x_1+\mathcal C_\eta).
\end{equation}

Then, similarly as in the proof of Proposition \eqref{prop-Lip} the function
\[w(x)=\frac{u(rx)-\varphi(rx)}{\|u\|_{L^\infty(B_r)}},\]
with $r>0$ small satisfies
\[M^+(\partial_e w)\geq-\delta \quad\textrm{and}\quad M^-(\partial_e w)\leq \delta\quad \textrm{in}\quad B_2\setminus \{x_n=0,\,w=0\},\]
where $\delta$ can be arbitrarily small provided that $r$ is small enough.

Moreover, still as in the proof of Proposition \eqref{prop-Lip},  we have
\begin{equation}\label{mar}
\partial_e w\geq c_0>0\qquad \textrm{onn}\quad B_1^*\cap \{x'\cdot e\geq 1/10\}.
\end{equation}

Rescaling \eqref{conseqProp} we that hat, for every $x_0 \in \partial\{w=0\}\cap  B_{1}^*$
\[
\{x_n=0,\,w=0\} \cap B^*_2 \subset B^*_2 \setminus (x_1+\mathcal C_\eta)
\]

Let us fix $\rho=1/10$, $B^* = B^*_{1/4}(e/2)$, and $\gamma\in (\beta_2,1)$ satisgying   $\beta_2+\gamma=1-\epsilon_0$.
Let $\eta\in C^2(\overline {B_{1}})$ be some smooth ``cutoff'' function with $\eta=1$ for $|x|\ge 1-\rho$ and $\eta=0$ in $B_{1/2}$. Let us call
\[C_1:=  \sup_{B_{1}} M^+ \eta = \sup_{B_{1-\rho}} M^+ \eta >0 \]

Let $\phi$ be the subsolution of Lemma \ref{lemsubsol} with $\rho = 1/10$ and
$B^* = B^*_{1/4}(e/2)$.
Let $\Phi=\Phi_{\rm sub}/ \|\Phi_{\rm sub}\|_{L^\infty(B_1)}$ the subsolution of Lemma \ref{homog-subsol} that vanishes in $\R^{n-1}\setminus \mathcal C_{\eta}^*$ and has homogeneity $\beta_2+\gamma$.

Let us fix  $x_0 \in \partial\{w=0\}\cap B_1^*$.

We will show next that, for $C$ large enough,
\begin{equation}\label{done}
  C\partial_e w - (x_n)^2 + 2\eta \ge 2C_1 \phi + \Phi(\,\cdot\,-x_0) \quad \mbox{in }B_1.
\end{equation}

Let
\[v =  \partial_e w  - (x_n)^2 + 2\eta - 2C_1 \phi - \Phi (\,\cdot\,-x_1) .\]

On on hand, let us show that $v\ge 0$ on  $\partial B_1$. Indeed,  we have ($r$ is large) we have $\partial_e w\ge 0$ in $B_1$. Also, $\eta= 1$ for for $|x|\ge 1-\rho$  and thus $\eta-|x|^2 =0$ on $\partial B_1$. Moreover,  recall that  $\phi=0$ on $\partial B_1$ and, since $0\le\Phi\le 1$ in $B_1$, $\eta-\Phi\ge 0$ on $\partial B_1$.

On the other hand, let us show that
\[M^-v \le 0 \quad \mbox{in }(B_1\setminus  B^*)\cup (x_0+\mathcal C_\eta^*) .\]
Indeed,  we have
\[
\begin{split}
M^- v &= M^- ( C \partial_e w  - (x_n)_+^2 + 2\eta - 2C_1 \phi - \Phi )
\\
& \le C M^- (\partial_e w)  -2 \lambda + 2 \sup_{B_{1-\rho}} M^+ \eta - 2C_1 M^- \phi + M^+\Phi (\,\cdot\,-x_0)
\\
&\le C\delta -2 \lambda  + 2C_1\chi _{B_{1-\rho}}   - 2C_1\chi _{B_{1-\rho}} + M^+ \Phi(\,\cdot\,-x_0)
\\
&\le C\delta -2 \lambda
\\
& \le 0
\end{split}
 \]
in $(B_1\setminus  B^*)\cup (x_1+\mathcal C_\eta^*) $  provided that  $C\delta-2n\lambda\le 0$.

That $v\ge 0$ in $B_1^*\setminus  (x_0+\mathcal C_\eta^*)$ is a now a consequence of \eqref{conseqProp} which implies that $w = (x_n)^2= \phi =\Phi  =0$ on that set.
Last,  recalling \eqref{mar} we see that $v\ge 0$ in $B^*$ can be guaranteed by choosing $C$ large (depending only on $c_0$ and universal constants).

Thus, choosing first $C$ large and then $\delta$ small enough so that $C\delta-2n\lambda\le 0$, and using the maximum principle, we prove $v\ge 0$ in $B_1$ and thus that
\[  C \partial_e w  \ge  \Phi (\,\cdot\,-x_0) = \left( \psi_{\rm sub }(\,\cdot\,-x_1) \right)_+^{\beta_2+\gamma} \quad on B^*_{1/2},\]
where $\psi_{\rm sub }$ was defined in Lemma \ref{homog-subsol}.

After rescaling and noting that $\psi_{\rm sub }(te)=t$, this implies that
\[ \partial_e w(te)  \ge ct^{\beta_2+\gamma}=ct^{1-\epsilon_0}>0 \quad \mbox {for }t\in(0,r/2).\]
Thus, \eqref{ourgoal} follows integrating with respect to $t$ (note that $w(0) = \partial_e (0) =0$) .
\end{proof}

Finally, as a consequence of the previous results, we give the:

\begin{proof}[Proof of Theorem \ref{thm1}]
By Proposition \ref{contagi}, the set of regular points is open, and (i) holds at all such points.
Moreover, still by Proposition \ref{contagi}, given any free boundary point $x_0$, there is a ball $B_r(x_0)$ in which all free boundary points are regular, with a common modulus of continuity $\nu$.
Thus, by Proposition \ref{prop-Lip}, the free boundary is $C^1$ at each of these points, with a uniform modulus of continuity (that depends on~$x_0$).
Thus, the free boundary is locally a $C^1$ graph in $B_r(x_0)$.
\end{proof}

When the ellipticity constants $\lambda$ and $\Lambda$ are close to 1, we establish the following.

\begin{cor}\label{cor-reg}
Let $F$ be as in \eqref{F}, and $u$ be any solution of \eqref{obst-pb}, with $\varphi\in C^{1,1}$.
Then, for any small $\delta>0$ we have
\[u\in C^{1,\frac12-\delta}(B_{1/2})\quad \textrm{whenever}\quad |\Lambda-1|+|\lambda-1|\leq \delta/C_0.\]
The constant $C_0$ is universal.
Furthermore, under such assumption on the ellipticity constants, we have
\[\|u\|_{C^{1,\frac12-\delta}(B_{1/2})}\leq C\bigl(\|u\|_{L^\infty(B_1)}+\|\varphi\|_{C^{1,1}(B_1\cap \{x_n=0\})}\bigr),\]
with $C$ depending only on $n$, $\lambda$ and $\Lambda$.
\end{cor}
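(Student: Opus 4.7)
The plan is to combine the sharp two-sided control on the exponents $\beta_1,\beta_2$ from Corollary \ref{cor-1D}(c) with a barrier argument dual to that of Proposition \ref{contagi}. Under the smallness hypothesis $|\Lambda-1|+|\lambda-1|\le\delta/C_0$, with $C_0$ universal and chosen large enough, Corollary \ref{cor-1D}(c) gives $\frac12-\delta/2<\beta_1<\frac12<\beta_2<\frac12+\delta/2$. To establish $C^{1,\frac12-\delta}$ regularity at each point $x_0\in B_{1/2}$, it suffices to exhibit an affine function $P_{x_0}$ with $|u(x)-P_{x_0}(x)|\leq C|x-x_0|^{\frac32-\delta}$ for $x$ in a neighborhood of $x_0$.

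I would distinguish three cases according to the location of $x_0$. If $x_0\notin\partial\{u=\varphi\}$, then $x_0$ lies either in $\{x_n\neq 0\}$ (where $F(D^2u)=0$ in a full ball around $x_0$) or in the relative interior of the contact set (where $u=\varphi$ in a thin disk through $x_0$ and $\varphi\in C^{1,1}$), and standard interior or up-to-the-boundary $C^{1,\alpha}$ estimates from \cite{CC,Fer} yield the bound directly. If $x_0$ is a case-(ii) free boundary point, the decay $\sup_{B_r(x_0)}(u-\varphi)\leq C_\epsilon r^{2-\epsilon}$ for every $\epsilon>0$, together with $\varphi\in C^{1,1}$ and the identities $u(x_0)=\varphi(x_0)$, $\nabla u(x_0)=\nabla\varphi(x_0)$ valid at free boundary points, trivially gives the desired $C^{1,\frac12-\delta}$ estimate with plenty of room.

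The heart of the argument is case (iii), the regular free boundary points. I would first invoke Proposition \ref{prop-Lip} to flatten the free boundary near $x_0$, identifying a tangential direction $e\in S^{n-2}$ along which $w:=u-\varphi$ is nondecreasing in a small ball. The derivative $\partial_e w$ then satisfies the Pucci inequalities with a small right-hand side coming from the $C^{1,1}$ error of $\varphi$, vanishes on the contact set, and is nonnegative. Next I would compare $\partial_e w$ from above with a suitably rescaled version of the supersolution $\Phi_{\rm super}$ from Lemma \ref{homog-subsol}, which is homogeneous of degree $\beta_1-\gamma$ and vanishes outside the wide cone $\mathcal C^*_{-\eta}$ containing the contact set (thanks to the Lipschitz flattening). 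Choosing $\gamma<\delta/4$, the comparison should yield $\partial_e w(te)\leq C t^{\beta_1-\gamma}\leq C t^{\frac12-\delta}$ for $t$ small; integrating from $0$ to $t$, using $w(x_0)=0$ and $\nabla w(x_0)=0$, gives $w(x_0+te)\leq Ct^{\frac32-\delta}$, and the Lipschitz flattening upgrades this to a bound on a full ball, producing the tangent plane $P_{x_0}=\varphi(x_0)+\nabla\varphi(x_0)\cdot(x-x_0)$.

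The hard part will be carrying out the comparison $\partial_e w\leq C\Phi_{\rm super}$ rigorously. One must combine the Lipschitz flatness of the free boundary (to guarantee that the contact set lies inside the vanishing cone of $\Phi_{\rm super}$) with a matching competitor on the boundary of the comparison region, built via a smooth cut-off and the subsolution from Lemma \ref{lemsubsol}, and then apply the maximum principle absorbing the small Pucci right-hand side coming from $\varphi$. This is the exact dual of the construction already performed in the proof of Proposition \ref{contagi}, and the ellipticity-smallness hypothesis enters the final Hölder exponent \emph{only} through the homogeneity $\beta_1-\gamma$ of $\Phi_{\rm super}$ provided by Corollary \ref{cor-1D}(c).
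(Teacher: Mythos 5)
Your strategy differs from the paper's: the paper does not split into regular/degenerate points at all. It proves the pointwise expansion $|u(x)-u(x_0)-\nabla u(x_0)\cdot(x-x_0)|\leq C|x-x_0|^{\frac32-\delta}$ at \emph{every} free boundary point simultaneously, with $C=C(n,\lambda,\Lambda)$, by a compactness/contradiction blow-up: a violating sequence produces (via the rescaling of Lemma \ref{simple-lem}) a nonzero global solution with growth $R^{\frac32-\delta}=R^{1+(\frac12-\delta)}$ and vanishing value and gradient at $0$, which contradicts the Liouville-type Corollary \ref{corclassif} because $\frac12-\delta<\beta_1$ under the ellipticity-closeness hypothesis; interior estimates then conclude as in the proof of \cite[Theorem 1.1]{Fer}. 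Your route bypasses Corollary \ref{corclassif} and instead uses Proposition \ref{prop-Lip} plus a dual barrier comparison with $\Phi_{\rm super}$, which is a genuinely different (and in spirit reasonable) idea, but as written it has a real gap.

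The gap is uniformity of constants, which is precisely what the corollary asserts ($C$ depending only on $n,\lambda,\Lambda$) and what your case analysis cannot deliver. At case-(ii) points, the constants $C_\epsilon$ in Theorem \ref{thm1}(ii) are purely qualitative: they depend on the point and on the solution, and nothing in the dichotomy makes them uniform over the free boundary, so ``trivially gives the desired estimate'' only yields a pointwise bound with an uncontrolled constant. At regular points, the radius $r$ and hence all constants produced by Proposition \ref{prop-Lip} (and by Proposition \ref{contagi}) depend on the modulus $\nu$ of the particular point, which again varies with the point and the solution; the normalization constant in your comparison $\partial_e w\leq C\Phi_{\rm super}$ inherits this dependence. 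Since a $C^{1,\frac12-\delta}$ seminorm bound on $B_{1/2}$ is a statement uniform over pairs of points, point-dependent constants do not assemble into the claimed estimate (nor even, without local uniformity, into the qualitative membership $u\in C^{1,\frac12-\delta}(B_{1/2})$). A secondary, fixable but unaddressed issue: your comparison only gives decay of $\partial_e w$ along the single ray $x_0+te$, and the upgrade to $\sup_{B_r(x_0)}|u-P_{x_0}|\leq Cr^{\frac32-\delta}$ on full (thick) balls is asserted rather than proved. The paper's compactness argument is exactly the device that removes all of these point-dependences at once, which is why it is the route taken there.
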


\begin{proof}
The proof is by contradiction, using the result in Corollary \ref{corclassif}.

Dividing by a constant if necessary, we assume $\|u\|_{L^\infty(B_1)}+\|\varphi\|_{C^{1,1}(B_1\cap \{x_n=0\})}\leq 1$.
We first claim that, for every free boundary point $x_0\in B_{1/2}\cap \partial\{u=\varphi\}$, we have
\begin{equation}\label{claim-final}
\bigl|u(x)-u(x_0)-\nabla u(x_0)\cdot(x-x_0)\bigr|\leq C|x-x_0|^{\frac32-\delta},
\end{equation}
with $C$ depending only on $n$ and $\lambda,\Lambda$.

Let us prove \eqref{claim-final} by contradiction.
Indeed, assume there are sequences of operators $F_k$ as in \eqref{F}, obstacles $\varphi_k$ satisfying $\|\varphi_k\|_{C^{1,1}(B_1\cap \{x_n=0\})}\leq 1$, solutions $u_k$ to \eqref{obst-pb} with $\|u_k\|_{L^\infty(B_1)}\leq1$, and free boundary points $x_k\in B_{1/2}$, such that
\[\bigl|u_k(x)-\nabla u_k(x_k)\bigr|\geq k|x-x_k|^{\frac32-\delta},\]
for all $k\geq1$.
By the $C^{1,\alpha}$ estimates in \cite{Fer}, we know that $|\nabla u_k(x_0)|\leq C$, so that after subtracting a linear function we may assume $u_k(x_k)=0$ and $\nabla u_k(x_k)=0$.
Moreover, after a translation we may assume for simplicity that $x_k=0$.

Then, defining
\[\theta(\rho)=\sup_{\rho\leq r\leq 1}\sup_k r^{\delta-\frac32}\|u_k\|_{L^\infty(B_r)},\]
and by the exact same argument in Lemma \ref{simple-lem}, we find a sequence $r_k\to0$ for which
\[w_k(x)=\frac{u_k(r_kx)}{\|u_k\|_{L^\infty(B_{r_k})}}\]
satisfies
\[|w_k(x)|\leq C\bigl(1+|x|^{\frac32-\delta}\bigr)\quad \textrm{in}\quad B_{1/r_k},\]
$\|w_k\|_{L^\infty(B_1)}=1$, $w_k(0)=0$, $\nabla w_k(0)=0$, and
\[\begin{array}{rclll}
F_k(D^2w_k)&=&0\,&\textrm{in} &B_{1/r_k}\setminus \{x_n=0\}\\
\min(-F_k(D^2w_k),\,w_k-\varphi_k)&=& 0\,&\textrm{on} &B_{1/r_k}\cap \{x_n=0\},
\end{array}\]
where
\[\|\varphi_k\|_{C^{1,1}(B_R)}=\frac{\|\varphi\|_{B_{Rr_k}}}{\|u_k\|_{L^\infty(B_{r_k})}}\leq \frac{CR^2(r_k)^2}{(r_k)^{\frac32-\delta}}=CR^2(r_k)^{\frac12+\delta}\]
converges to 0 for every fixed $R$ as $k\to\infty$.

Thus, by $C^{1,\alpha}$ estimates, up to a subsequence the operators $F_k$ converge to an operator $F$ as in \eqref{F}, and the functions $w_k$ converge locally uniformly to a function $w$ satisfying
\[|w(x)|\leq C\bigl(1+|x|^{\frac32-\delta}\bigr)\quad \textrm{in}\quad \R^n,\]
$\|w\|_{L^\infty(B_1)}=1$, $w(0)=0$, $\nabla w(0)=0$, and
\[\begin{array}{rclll}
F(D^2w)&=&0\,&\textrm{in} &\R^n\setminus \{x_n=0\}\\
\min(-F(D^2w),\,w)&=& 0\,&\textrm{on} &\R^n\cap \{x_n=0\}.
\end{array}\]
By Corollary \ref{corclassif}, we get $w\equiv0$, a contradiction.
Thus, \eqref{claim-final} is proved.

Finally, combining \eqref{claim-final} with interior regularity estimates, the result follows exactly as in the proof of \cite[Theorem~1.1]{Fer}.
\end{proof}

\end{document}